\theoremstyle{plain}
\newtheorem{theorem}{Theorem}[section]
\newtheorem{corollary}[theorem]{Corollary}
\newtheorem{proposition}[theorem]{Proposition}
\newtheorem{lemma}[theorem]{Lemma}
\theoremstyle{remark}
\newtheorem{remark}{Remark}[section]
\numberwithin{equation}{section}
\newcommand\Cdot{{\mskip2mu\cdot\mskip2mu}}
\begin{document}

\title[Sharp approximation theorems and Fourier inequalities]
{Sharp approximation theorems and Fourier inequalities
in the Dunkl setting}

\author{D.~V.~Gorbachev}
\address{D.~V.~Gorbachev, Tula State University,
Department of Applied Mathematics and Computer Science,
300012 Tula, Russia}
\email{dvgmail@mail.ru}

\author{V.~I.~Ivanov}
\address{V.~I.~Ivanov, Tula State University,
Department of Applied Mathematics and Computer Science,
300012 Tula, Russia}
\email{ivaleryi@mail.ru}

\author{S.~Yu.~Tikhonov}
\address{S.~Yu.~Tikhonov, Centre de Recerca Matem\`atica\\
Campus de Bellaterra, Edifici~C 08193 Bellaterra (Barcelona), Spain; ICREA, Pg.
Llu\'is Companys 23, 08010 Barcelona, Spain, and Universitat Aut\`onoma de
Barcelona.}
\email{stikhonov@crm.cat}

\date{\today}
\keywords{Dunkl weight; Dunkl Laplacian; best approximation; modulus of
smoothness; $K$-functional; sharp Jackson, Marchaud, reverse Marchaud
inequalities; Littlewood--Paley theory; Pitt's and Kellogg's inequality} \subjclass{42B10, 33C45, 33C52}

\thanks{The work of D.~Gorbachev and V.~Ivanov is supported by the Russian Science
Foundation under grant 18-11-00199 and performed in Tula State University.
S.~Tikhonov was partially supported by MTM 2017-87409-P, 2017 SGR 358, and the
CERCA Programme of the Generalitat de Catalunya.}

\begin{abstract}
In this paper we study direct and inverse approximation inequalities in
$L^{p}(\mathbb{R}^{d})$, $1<p<\infty$, with the Dunkl weight. We obtain these
estimates in their sharp form substantially improving previous results. We
also establish new estimates of the modulus of smoothness of a function $f$ via
the fractional powers of the Dunkl Laplacian of approximants of $f$. Moreover,
we obtain new Lebesgue type estimates for moduli of smoothness in terms of
Dunkl transforms. Needed Pitt-type and Kellogg-type Fourier--Dunkl inequalities
are derived.
\end{abstract}

\maketitle

\section{Introduction}

\subsection{Notation}
{Throughout the paper, $(x,y)$ denotes the scalar product in the
$d$-dimensional Euclidean space $\mathbb{R}^d$, $d\in \mathbb{N}$, equipped
with a norm $|x|=\sqrt{(x,x)}$. By  $B_r(x_{0})=\{x\in\mathbb{R}^d\colon
|x-x_{0}|\le r\}$ we denote  the Euclidean ball.}

Let a finite subset $R\subset \mathbb{R}^{d}\setminus\{0\}$ be a root system and
$R_{+}$ be a positive subsystem of $R$.
By $G(R)\subset O(d)$ we denote a finite reflection group,
generated by reflections $\{\sigma_{a}\colon a\in R\}$, where $\sigma_{a}$ is a
reflection with respect to hyperplane $(a,x)=0$. Let $k(a)\colon R\to
\mathbb{R}_{+}$ be a $G$-invariant multiplicity function.

Let
\[
v_{k}(x)=\prod_{a\in R_{+}}|(a,x)|^{2k(a)}
\]
be the Dunkl weight,
\[
d\mu_{k}(x)=c_{k}v_{k}(x)\,dx,\quad
c_{k}^{-1}=\int_{\mathbb{R}^{d}}e^{-|x|^{2}/2}v_{k}(x)\,dx,
\]
and $L^{p}(\mathbb{R}^{d},d\mu_{k})$, $1\le p<\infty$, be the space of
complex-valued Lebesgue measurable functions $f$ for which
\[
\|f\|_{p}=\|f\|_{p,d\mu_{k}}=\biggl(\int_{\mathbb{R}^{d}}|f|^{p}\,d\mu_{k}\biggr)^{1/p}<\infty.
\]
We also assume that $L^{\infty}\equiv C_b$ is the space of bounded continuous
functions $f$ with norm $\|f\|_{\infty}$. As usual $\mathcal{S}(\mathbb{R}^d)$
denotes the Schwartz space.

The differential-differences Dunkl operators are given by
\[
D_{j,k}f(x)=\frac{\partial f(x)}{\partial x_{j}}+ \sum_{a\in
R_{+}}k(a)(a,e_{j})\,\frac{f(x)-f(\sigma_{a}x)}{(a,x)},\quad j=1,\dots,d.
\]
Let $\Delta_k=\sum_{j=1}^dD_{j,k}^2$ be the Dunkl Laplacian. As usual,
$(-\Delta_k)^{s}$ for $s>0$ stands for the fractional power of the Dunkl
Laplacian; see Subsection~\ref{subsec-lapl} for more details.

By definition,
\begin{equation}\label{Weighteddimension}
\lambda_k=\frac{d}{2}-1+\sum_{a\in R_+}k(a)\quad \text{and}\quad
d_k=2(\lambda_k+1).
\end{equation}
The number $d_k$ plays the role of the generalized dimension of the space
$(\mathbb{R}^d,d\mu_k)$. We note that $\lambda_k\geq-1/2$ and, moreover,
$\lambda_k=-1/2$ if and only if $d=1$ and $k\equiv0$. In what follows we assume
that
\[
\lambda_k>-\frac{1}{2}\quad \text{and}\quad d_k>1.
\]


Throughout this paper, $A\lesssim B$ means that $A\leq CB$ with a constant
$C>0$ depending only on nonessential parameters. Moreover, we write $A\asymp B$
if $A\lesssim B$ and $B\lesssim A$.

\subsection{Sharp Jackson and inverse inequalities}

Let $E_{\sigma}(f)_{p}$ be the best approximation of a function $f$ in
$L^{p}(\mathbb{R}^{d},d\mu_{k})$ by entire functions of type $\sigma>0$, i.e.,
\[
E_{\sigma}(f)_{p}=\inf_{g\in \mathcal{B}_{p,k}^{\sigma}}\|f-g\|_{p},
\]
{where
$\mathcal{B}_{p,k}^{\sigma}=\mathcal{B}_{p,k}^{\sigma}(\mathbb{R}^{d})$ is the
Bernstein class of entire functions of spherical exponential type at most
$\sigma$ from $L^p(\mathbb{R}^d,d\mu_k)$} (see
Subsection~\ref{subsec-bern} for more details). It is known \cite{GorIvaTik19}
that the best approximation is achieved. As usual, $\omega_r(f,\delta)_{p}$
denotes the modulus of smoothness of order $r$ of $f\in
L^{p}(\mathbb{R}^{d},d\mu_{k})$ (the precise definition and the main properties
are given in Subsection~\ref{subsec-mod}).

In \cite{GorIvaTik19,GorIva19}, we derived the classical Jackson and inverse
approximation theorems in $L^{p}(\mathbb{R}^{d},d\mu_{k})$, $1\le p\le\infty$,
namely,
\begin{equation}\label{eq1}
E_{\sigma}(f)_{p}\lesssim \omega_r\Bigl(f,\frac{1}{\sigma}\Bigr)_{p},\quad \sigma, r>0,
\end{equation}
and
\begin{equation}\label{eq2}
\omega_{r}\Bigl(f,\frac{1}{n}\Bigl)_{p}\lesssim \frac{1}{n^{r}} \sum_{j=0}^n (j+1)^{r-1} E_j(f)_{p},\quad n\in\mathbb{N},
\quad r>0,
\end{equation}
as well as the equivalence between the fractional modulus of smoothness and the $K$-functional:
\begin{equation}\label{eq3}
K_{r}(f,\delta)_{p}\asymp \omega_r(f,\delta)_{p}, \quad \delta>0.
\end{equation}
Moreover, we obtained that for $d_{k}>1$
\begin{equation}\label{omega-delta}
\omega_r(f,\delta)_{p}=\sup_{0<t\leq\delta}\|\varDelta_t^rf\|_{p}\asymp
\|\varDelta_\delta^rf\|_{p},\quad \delta>0,
\end{equation}
where the difference $\varDelta_\delta^r$ is defined by the generalized
translation operator $T^t$ (see \eqref{difference} below). In \cite{GorIva19b},
we proved the Jackson inequality in $L^{p}(\mathbb{R}^{d},d\mu_{k})$, $1\le
p<2$, with a sharp constant.

Our first goal is to sharpen \eqref{eq1} and \eqref{eq2} in the case
$1<p<\infty$, taking into account the strict convexity of the spaces
$L^{p}(\mathbb{R}^{d},d\mu_{k})$. The sharp Jackson and sharp inverse
inequalities are given in the following result.

\begin{theorem}\label{thm1}
If $1<p<\infty$, $r>0$, $n\in\mathbb{N}$, $s=\max(p,2)$, and $q=\min(p,2)$, then for any $f\in
L^{p}(\mathbb{R}^{d},d\mu_{k})$,
\begin{equation}\label{eq4}
\frac{1}{n^r}\biggl(\sum_{j=1}^{n}j^{sr-1}E_{j}^s(f)_{p}\biggr)^{1/s}\lesssim
\omega_r\Bigl(f,\frac{1}{n}\Bigr)_{p}
\end{equation}
and
\begin{equation}\label{eq6}
\omega_{r}\Bigl(f,\frac{1}{n}\Bigl)_{p}\lesssim
\frac{1}{n^{r}}\biggl(\sum_{j=1}^n j^{qr-1}
E_j^q(f)_{p}\biggr)^{1/q}+\frac{\|f\|_{p}}{n^r}.
\end{equation}
\end{theorem}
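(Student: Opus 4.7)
The proof combines the realization of the fractional $K$-functional, a dyadic Dunkl Littlewood--Paley decomposition, and the type $q=\min(p,2)$ and cotype $s=\max(p,2)$ properties of $L^{p}$, $1<p<\infty$. By the equivalence~\eqref{eq3}, it suffices to work with $K_{r}(f,1/n)_{p}$, and a de la Vall\'ee Poussin construction based on the Dunkl transform supplies, for each $n$, a near-best approximant $g_{n}\in\mathcal{B}_{p,k}^{2n}$ satisfying
\[
\|f-g_{n}\|_{p}+n^{-r}\|(-\Delta_{k})^{r/2}g_{n}\|_{p}\asymp\omega_{r}(f,1/n)_{p}.
\]

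I next introduce dyadic Dunkl multipliers $P_{j}$, $j\geq 0$, with frequency support in $\{2^{j-1}\leq|\xi|\leq 2^{j+1}\}$, and invoke three tools from Dunkl harmonic analysis: \emph{(i)} the Littlewood--Paley square-function equivalence $\|h\|_{p}\asymp\|(\sum_{j}|P_{j}h|^{2})^{1/2}\|_{p}$; \emph{(ii)} the Bernstein-type scaling $\|(-\Delta_{k})^{r/2}P_{j}h\|_{p}\asymp 2^{jr}\|P_{j}h\|_{p}$; and \emph{(iii)} the spectral gap $\|P_{j}f\|_{p}\lesssim E_{2^{j-1}}(f)_{p}$, a consequence of the fact that $P_{j}$ annihilates functions in $\mathcal{B}_{p,k}^{2^{j-1}}$. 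Combining~(i) with the Khintchine--Kahane inequalities and the type/cotype of $L^{p}$ yields the block estimates
\[
\Bigl(\sum_{j}\|P_{j}h\|_{p}^{s}\Bigr)^{1/s}\lesssim\|h\|_{p}\lesssim\Bigl(\sum_{j}\|P_{j}h\|_{p}^{q}\Bigr)^{1/q}.
\]

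\emph{Proof of~\eqref{eq4}.} Let $N=\lfloor\log_{2}n\rfloor$. Since $\sum_{\ell\leq j}P_{\ell}g_{n}\in\mathcal{B}_{p,k}^{2^{j+1}}$, the right-hand block estimate produces
\[
E_{2^{j}}(f)_{p}\lesssim\|f-g_{n}\|_{p}+\Bigl(\sum_{\ell>j}\|P_{\ell}g_{n}\|_{p}^{q}\Bigr)^{1/q}.
\]
Raising to the $s$-th power, weighting by $2^{jrs}$, summing $j\leq N$, and applying a discrete Hardy inequality (valid since $r>0$ and $s\geq q$) controls the tail contribution by $\sum_{\ell\leq N}2^{\ell rs}\|P_{\ell}g_{n}\|_{p}^{s}$; in turn, the left-hand block estimate applied to $(-\Delta_{k})^{r/2}g_{n}$ together with~(ii) dominates this sum by $\|(-\Delta_{k})^{r/2}g_{n}\|_{p}^{s}\lesssim n^{rs}\omega_{r}^{s}(f,1/n)_{p}$. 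The first piece contributes $n^{rs}\|f-g_{n}\|_{p}^{s}\lesssim n^{rs}\omega_{r}^{s}(f,1/n)_{p}$ via realization. Re-summing dyadically converts $(\sum_{j\leq N}2^{jrs}E_{2^{j}}^{s})^{1/s}$ into $(\sum_{j\leq n}j^{rs-1}E_{j}^{s}(f)_{p})^{1/s}$.

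\emph{Proof of~\eqref{eq6}.} Realization gives $\omega_{r}(f,1/n)_{p}\lesssim E_{n}(f)_{p}+n^{-r}\|(-\Delta_{k})^{r/2}g_{n}\|_{p}$. Decomposing $g_{n}=\sum_{j=0}^{N}P_{j}g_{n}$, the right-hand block estimate and~(ii) yield $\|(-\Delta_{k})^{r/2}g_{n}\|_{p}\lesssim(\sum_{j=0}^{N}2^{jrq}\|P_{j}g_{n}\|_{p}^{q})^{1/q}$. Using~(iii) and $\|P_{j}(g_{n}-f)\|_{p}\lesssim E_{n}(f)_{p}\lesssim E_{2^{j-1}}(f)_{p}$, I obtain $\|P_{j}g_{n}\|_{p}\lesssim E_{2^{j-1}}(f)_{p}$ for $j\geq 1$, while $\|P_{0}g_{n}\|_{p}\lesssim\|f\|_{p}$. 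Dyadic re-summation gives the bound $n^{-r}(\sum_{j=1}^{n}j^{rq-1}E_{j}^{q}(f)_{p})^{1/q}+n^{-r}\|f\|_{p}$, into which $E_{n}(f)_{p}$ is absorbed by the trivial estimate $E_{n}(f)_{p}\lesssim n^{-r}(\sum_{j\leq n}j^{rq-1}E_{j}^{q}(f)_{p})^{1/q}$. The principal technical obstacle throughout is establishing the Dunkl Littlewood--Paley machinery and a Mihlin-type multiplier theorem underlying~(i)--(iii); once these are in hand, the rest is dyadic summation together with a standard discrete Hardy inequality.
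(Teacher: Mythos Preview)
Your proposal is correct and follows essentially the same strategy as the paper: dyadic reduction, realization of the $K$-functional (Proposition~\ref{prop6}), Dunkl Littlewood--Paley theory, Bernstein-type estimates, and Hardy's inequality. The paper's argument differs only in packaging. Rather than first extracting the scalar block estimates $(\sum_{j}\|\theta_{j}h\|_{p}^{s})^{1/s}\lesssim\|h\|_{p}\lesssim(\sum_{j}\|\theta_{j}h\|_{p}^{q})^{1/q}$ via Khintchine--Kahane and type/cotype, the paper works directly with the pointwise square function $\|(\sum_{j}2^{2rj}|\theta_{j}g_{n}|^{2})^{1/2}\|_{p}$ and interchanges the order of the $j$- and $l$-sums inside the $L^{p}$-norm, splitting into the cases $1<p\le 2$ (where $\|{\cdot}\|_{p/2}$ is subadditive) and $2<p<\infty$ (where a pointwise $\ell^{p/2}$--$\ell^{(p/2)'}$ duality plus H\"older does the job). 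The upper block estimate you use is exactly the paper's Corollary~\ref{cor1}, and the lower block estimate is its dual, so the two arguments are equivalent at the level of ideas; your formulation is somewhat more modular, while the paper's avoids invoking the abstract type/cotype machinery.
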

Inequalities \eqref{eq4} and \eqref{eq6} have been first obtained by M.F. Timan
(see \cite{devore,Tim58,Tim66}) for periodic functions $f\in L^p(\mathbb{T})$:
\[
\frac{1}{n^r}\biggl(\sum_{j=1}^{n}j^{sr-1}E_{j-1}^s(f)_{L^p(\mathbb{T})}\biggr)^{1/s}\lesssim
\omega_r\Bigl(f,\frac{1}{n}\Bigr)_{L^p(\mathbb{T})}\lesssim
\frac{1}{n^{r}}\biggl(\sum_{j=1}^n j^{qr-1}E_{j-1}^q(f)_{L^p(\mathbb{T})}\biggr)^{1/q},
\]
where $\mathbb{T}=(-\pi,\pi]$, $r\in\mathbb{N}$, $E_{j}(f)_{L^p(\mathbb{T})}$ is the
best approximation of $f$ by trigonometric polynomials of degree at most $j$,
and $\omega_r(f,\delta)_{L^p(\mathbb{T})}$ is the classical $r$-modulus of smoothness.
Sharp Jackson and inverse approximation inequalities were further developed in
many papers (see for example
\cite{DaiDit05,DaiDit07,DaiDitTik08,Dit88,DitPry07, kol,Tot88} and the
references therein).
Our proof of Theorem \ref{thm1} is based on the corresponding Littlewood--Paley decomposition in the
Dunkl setting; cf.~\cite{DaiDit05,DaiDitTik08}.

The next two inequalities provide sharp interrelation between fractional moduli
of smoothness of different orders.

\begin{corollary}\label{thm2}
Under the assumptions of Theorem \ref{thm1}, the following sharp reverse Marchaud
and sharp Marchaud inequalities hold: for any $m>r>0$,
\begin{equation}\label{eq5}
\frac{1}{n^r}\biggl(\sum_{j=1}^{n}j^{sr-1}\omega_m^s\Bigl(f,\frac{1}{j}\Bigr)_{p}\biggr)^{1/s}\lesssim
\omega_r\Bigl(f,\frac{1}{n}\Bigr)_{p}+\frac{\|f\|_{p}}{n^r}
\end{equation}
and \begin{equation}\label{eq7}
\omega_{r}\Bigl(f,\frac{1}{n}\Bigl)_{p}\lesssim
\frac{1}{n^{r}}\biggl(\sum_{j=1}^n j^{qr-1}
\omega_{m}^q\Bigl(f,\frac{1}{j}\Bigr)_{p}\biggr)^{1/q}+\frac{\|f\|_{p}}{n^r}.
\end{equation}
\end{corollary}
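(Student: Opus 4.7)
\emph{Proof plan for Corollary \ref{thm2}.} The plan is to deduce both inequalities from Theorem \ref{thm1} using the Jackson inequality \eqref{eq1} and a discrete Hardy inequality; no new harmonic-analytic input is required beyond what is already available from the Littlewood--Paley machinery used to prove Theorem \ref{thm1}.

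The sharp Marchaud inequality \eqref{eq7} is the easy half. Jackson's inequality \eqref{eq1}, applied with modulus order $m$, yields $E_j(f)_p\lesssim\omega_m(f,1/j)_p$ for every $j\ge 1$; substitution into \eqref{eq6} of Theorem \ref{thm1} gives \eqref{eq7} at once.

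For the sharp reverse Marchaud inequality \eqref{eq5}, I would first apply the sharp inverse estimate \eqref{eq6} with $r$ replaced by $m$ to obtain
\[
\omega_m(f,1/j)_p\lesssim \frac{1}{j^m}\biggl(\sum_{i=1}^j i^{qm-1}E_i^q(f)_p\biggr)^{1/q}+\frac{\|f\|_p}{j^m},
\]
then raise to the $s$-th power (using $(a+b)^s\lesssim a^s+b^s$, $s\ge 1$), multiply by $j^{sr-1}$, and sum over $1\le j\le n$. The two resulting pieces are handled separately: the $\|f\|_p$-tail contributes $\|f\|_p^s\sum_{j\ge 1} j^{s(r-m)-1}\lesssim\|f\|_p^s$ because $m>r$, while the main piece is
\[
\sum_{j=1}^n j^{s(r-m)-1}\biggl(\sum_{i=1}^j i^{qm-1}E_i^q(f)_p\biggr)^{s/q}.
\]
To this I would apply the weighted discrete Hardy inequality
\[
\sum_{j=1}^\infty j^{-\alpha-1}\biggl(\sum_{i=1}^j b_i\biggr)^{\tau}\lesssim \sum_{j=1}^\infty j^{\tau-\alpha-1}b_j^\tau\qquad(\tau\ge 1,\ \alpha>0),
\]
with $\tau=s/q$, $\alpha=s(m-r)$, $b_j=j^{qm-1}E_j^q(f)_p$. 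The output is bounded by $\sum_{j=1}^n j^{sr-1}E_j^s(f)_p$, which by the sharp Jackson inequality \eqref{eq4} of Theorem \ref{thm1} is in turn dominated by $n^{sr}\omega_r^s(f,1/n)_p$. Taking the $s$-th root and dividing by $n^r$ yields \eqref{eq5}.

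The main technical point is the bookkeeping in the Hardy step: the condition $\alpha>0$ uses the hypothesis $m>r$, and $\tau\ge 1$ uses $s=\max(p,2)\ge\min(p,2)=q$; the resulting weight on $E_j^s$ is $j^{\tau-\alpha-1+(qm-1)\tau}=j^{sr-1}$, which is exactly the weight appearing on the left-hand side of \eqref{eq4}. This clean matching of exponents is the one place where the specific choice of $s$ and $q$ is crucial; the rest of the argument is routine.
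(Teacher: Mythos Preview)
Your proof is correct and follows the same overall strategy as the paper: both halves are deduced from Theorem~\ref{thm1} together with Jackson's inequality \eqref{eq1} and a discrete Hardy inequality. For \eqref{eq7} your argument coincides with the paper's.

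For \eqref{eq5} there is one minor difference worth noting. You bound $\omega_m(f,1/j)_p$ using the \emph{sharp} inverse estimate \eqref{eq6} (with $r$ replaced by $m$), which produces an $\ell^q$ inner sum and forces the Hardy step to be run with exponent $\tau=s/q$. The paper instead uses the \emph{classical} inverse inequality \eqref{eq2}, whose inner sum is $\ell^1$; this lets Hardy be applied with exponent $s$ directly and avoids the extra bookkeeping with $s/q$. Both routes land on $\sum_{j}j^{sr-1}E_j^s(f)_p$ and finish via \eqref{eq4}, so the distinction is cosmetic --- your version simply uses a stronger ingredient than is actually needed. Your exponent check $\tau-\alpha-1+(qm-1)\tau=sr-1$ is correct, as are the conditions $\alpha=s(m-r)>0$ and $\tau=s/q\ge1$ required for the Hardy inequality.
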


\subsection{Smoothness of functions via smoothness of best approximants}
Recently \cite{KolTik19}, 
 smoothness properties of approximation processes were used to characterize smoothness properties of
functions themselves.
We continue this line of research in $L^p$ with Dunkl weights.
As approximation processes we consider the best approximants and the de la
Vall\'{e}e Poussin type operators.


Let $f_{\sigma}\in \mathcal{B}_{p,k}^{\sigma}$ denote the best approximant of
$f$ in $L^{p}(\mathbb{R}^{d},d\mu_{k})$, that is,
$E_{\sigma}(f)_{p}=\|f-f_{\sigma}\|_{p}$. Assume that $\eta_{j}f$ is the de la
Vall\'{e}e Poussin type operator, namely, $\eta_jf$ is the multiplier linear
operator given by
$
\mathcal{F}_{k}(\eta_jf)(y)=\eta_j(y)\mathcal{F}_{k}(f)(y).
$
Here $\eta_j(x)=\eta(2^{-j}x)$ and
a radial function $\eta\in \mathcal{S}(\mathbb{R}^d)$ is such that $\eta(x)=1$
if $|x|\leq 1/2$, $\eta(x)>0$ if $|x|<1$, and $\eta(x)=0$ if $|x|\geq 1$; see
Section~\ref{sec4} for more details.


\begin{theorem}\label{thm3}
If $1<p<\infty$, $r>0$, $n\in\mathbb{N}$, $s=\max(p,2)$, and $q=\min(p,2)$,
then for any $f\in L^{p}(\mathbb{R}^{d},d\mu_{k})$,
\begin{align}
\biggl(\sum_{j=n+1}^{\infty}2^{-srj}\bigl\|(-\Delta_k)^{r/2}P_{j}f\bigr\|_{p}^s\biggr)^{1/s}&\lesssim
\omega_r(f,2^{-n})_{p}\notag\\&\lesssim
\biggl(\sum_{j=n+1}^{\infty}2^{-qrj}\bigl\|(-\Delta_k)^{r/2}P_{j}f\bigr\|_{p}^q\biggr)^{1/q},\label{eq8}
\end{align}
where
$P_{j}f$ stands for the best approximants $f_{2^{j}}$ or the de la Vall\'{e}e Poussin
type operators~$\eta_{j}f$.
\end{theorem}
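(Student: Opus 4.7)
\textbf{Proof plan for Theorem~\ref{thm3}.} The plan is to combine the realization of the $K$-functional (from \cite{GorIvaTik19} together with \eqref{eq3}),
$\omega_r(f,2^{-n})_p\asymp\|f-P_n f\|_p+2^{-nr}\|(-\Delta_k)^{r/2}P_n f\|_p$,
valid for both $P_n f=f_{2^n}$ and $P_n f=\eta_n f$, with the Dunkl Littlewood--Paley machinery already underlying Theorem~\ref{thm1}. Introduce $\varphi_j=\eta_{j+1}-\eta_j$ and the Fourier multiplier operators $\Phi_j$ with symbol $\varphi_j$, so that $\Phi_j g$ has Dunkl transform supported in the annulus $\{|y|\asymp 2^j\}$ and $\eta_n f=\sum_{j<n}\Phi_j f$. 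Two further tools are needed: the fractional Bernstein inequality on annular pieces, $\|(-\Delta_k)^{r/2}\Phi_j g\|_p\asymp 2^{jr}\|\Phi_j g\|_p$, and the two-sided Dunkl Littlewood--Paley inequality, $(\sum_j\|\Phi_j g\|_p^s)^{1/s}\lesssim\|g\|_p\lesssim(\sum_j\|\Phi_j g\|_p^q)^{1/q}$, $1<p<\infty$.

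For the upper bound on $\omega_r$ in \eqref{eq8}, I would apply the realization to $g=P_{n+1}f$:
$\omega_r(f,2^{-n})_p\lesssim\|f-P_{n+1}f\|_p+2^{-nr}\|(-\Delta_k)^{r/2}P_{n+1}f\|_p$.
The second summand is bounded by the $j=n+1$ term of the target sum. For the first, the $\ell^q$-side of Littlewood--Paley gives $\|f-P_{n+1}f\|_p\lesssim\bigl(\sum_{j\ge n+1}\|\Phi_j f\|_p^q\bigr)^{1/q}$, and Bernstein combined with $\Phi_j f=P_{j+1}f-P_j f$ and the triangle inequality yields $\|\Phi_j f\|_p\lesssim 2^{-jr}(\|(-\Delta_k)^{r/2}P_{j+1}f\|_p+\|(-\Delta_k)^{r/2}P_j f\|_p)$, producing the required $\ell^q$-bound.

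For the lower bound in \eqref{eq8}, I would split $(-\Delta_k)^{r/2}P_j f=(-\Delta_k)^{r/2}P_n f+(-\Delta_k)^{r/2}(P_j f-P_n f)$ for $j\ge n+1$. Since $P_j f-P_n f=\sum_{l=n}^{j-1}\Phi_l f$, the $\ell^q$-side of Littlewood--Paley and Bernstein give $\|(-\Delta_k)^{r/2}(P_j f-P_n f)\|_p\lesssim\bigl(\sum_{l=n}^{j-1}2^{lrq}\|\Phi_l f\|_p^q\bigr)^{1/q}$. Raising to the $s$th power, multiplying by $2^{-jrs}$ and summing over $j\ge n+1$ splits the estimate into two pieces: the $\|(-\Delta_k)^{r/2}P_n f\|_p$ piece is $\lesssim 2^{-nrs}\|(-\Delta_k)^{r/2}P_n f\|_p^s\lesssim\omega_r(f,2^{-n})_p^s$ by the realization, while a discrete weighted Hardy inequality (geometric weight $2^{-jrs}$, exponent $s/q\ge1$) reduces the remaining piece to $\sum_{l\ge n}\|\Phi_l f\|_p^s$. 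By the $\ell^s$-side of Littlewood--Paley applied to $f-P_n f$ (whose LP components are $\Phi_l f$ for $l\ge n$), this is $\le\|f-P_n f\|_p^s\lesssim\omega_r(f,2^{-n})_p^s$.

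The main obstacle is the lower bound: because $(-\Delta_k)^{r/2}P_j f$ is a low-frequency truncation and not annularly localized, one cannot apply Bernstein directly; the telescoping through $P_n f$ is what exposes a usable $\ell^q$ Littlewood--Paley sum, and the passage back from that $\ell^q$ sum to the sharp $\ell^s$ form on the left of \eqref{eq8} is the content of the weighted Hardy step, in the spirit of \cite{DaiDit05,DaiDitTik08,KolTik19}. A routine extra check, based on $\|f-f_{2^j}\|_p\le\|f-\eta_j f\|_p\lesssim E_{2^{j-1}}(f)_p$ and Bernstein in $\mathcal{B}_{p,k}^{2^j}$, shows that the argument applies uniformly to both $P_j f=f_{2^j}$ and $P_j f=\eta_j f$.
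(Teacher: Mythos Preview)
Your plan works cleanly for the de la Vall\'{e}e Poussin case $P_jf=\eta_jf$ and is essentially the same route as the paper's proof there: norm-level Littlewood--Paley, Bernstein on annular pieces, a discrete Hardy inequality, and the realization of the $K$-functional.

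The gap is in the best-approximant case $P_jf=f_{2^j}$. Your argument relies twice on the identity $\Phi_jf=P_{j+1}f-P_jf$, but this is true only for the linear multiplier operators $\eta_j$. The differences $f_{2^{j+1}}-f_{2^j}$ need not have Dunkl transform supported in an annulus $\{|y|\asymp 2^j\}$, so neither the reverse Bernstein step $\|g_j\|_p\lesssim 2^{-jr}\|(-\Delta_k)^{r/2}g_j\|_p$ nor the $\ell^s/\ell^q$ Littlewood--Paley bounds apply to $g_j=f_{2^{j+1}}-f_{2^j}$. Nor does the ``routine extra check'' repair this: the inequality $\|f-f_{2^j}\|_p\le\|f-\eta_jf\|_p$ lets you swap the approximation-error terms, but realization only equates the full expressions $\|f-P_jf\|_p+2^{-jr}\|(-\Delta_k)^{r/2}P_jf\|_p$, not the Laplacian terms separately, and the residual sum $\sum_{j\ge n}E_{2^j}(f)_p^s$ that appears is in general not controlled by $\omega_r(f,2^{-n})_p^s$ (take $E_{2^j}(f)_p\sim j^{-1/s}$).

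What is missing is exactly the tool the paper uses for best approximants: the strong-uniqueness inequalities of Lim--Smarzewski,
\[
\|f-f_\sigma\|_p^s\le\|f-g\|_p^s-A\|g-f_\sigma\|_p^s,\qquad
\|f-f_\sigma\|_p^q\le\|f-g\|_p^q-B\|g-f_\sigma\|_p^q,
\]
valid for every $g\in\mathcal{B}_{p,k}^{\sigma}$. For the lower bound these give the telescoping estimate $\sum_{j\ge n}\|f_{2^{j+1}}-f_{2^j}\|_p^s\lesssim\|f-f_{2^n}\|_p^s$, which replaces your appeal to $\ell^s$-LP on the nonlinear differences; for the upper bound they let one write $\|f-f_{2^{n+1}}\|_p^q$ as a telescoping sum dominated by $\sum_{j}E_{2^{j-1}}(f_{2^j})_p^q\lesssim\sum_j 2^{-qrj}\|(-\Delta_k)^{r/2}f_{2^j}\|_p^q$ via the Jackson inequality applied to $f_{2^j}$. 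Without this ingredient the best-approximant half of the theorem does not follow from your outline.
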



\subsection{Weighted Fourier inequalities in Dunkl setting}
In various problems of harmonic analysis and approximation theory it is
important to know how smoothness of functions is related to the behavior of its
Fourier transforms. This study was originated by Lebesgue \cite[(4.1)]{Zy} who
obtained the following estimate for the Fourier coefficients~$\widehat{f}_n$ of
a periodic function $f\in L^{1}(\mathbb{T})$:
\begin{equation}\label{lebesgue}
|\widehat{f}_n|\lesssim \omega_1\Bigl(f,\frac1n\Bigr)_{L^{1}(\mathbb{T})}\quad\mbox{as}\quad
n\to \infty.
\end{equation}
Similar problems for the Fourier transform/coefficients in
${L^{p}(\mathbb{R}^d)}$ and ${L^{p}(\mathbb{T}^d)}$
  have
been recently investigated in \cite{BraPin08,Bra13,GorTik12}. In this paper we not only
extend these results for the Dunkl setting but obtain completely new Fourier inequalities. Let $\mathcal{F}_{k}(f)$ denote the
Dunkl transform, see Section~\ref{sec2}. {For $k\equiv 0$ we deal with  the usual Fourier transform $\mathcal{F}_0(f)=\widehat{f}$.}
Let $\chi_{j}$ be the
 characteristic functions of the dyadic annuli $\{2^{j}\le |x|<2^{j+1}\}$, $j\in
\mathbb{Z}$, that is,
$\chi_{j}=\chi_{\{2^{j}\le |x|<2^{j+1}\}}$.

We obtain the following estimates of moduli of smoothness in terms of Dunkl transforms.

\begin{theorem}\label{thm5}
Let $\delta,r>0$.

\textup{(1)} If $1<p\le 2$ and $f\in L^p(\mathbb{R}^d,
d\mu_k)$, then, for  $p\le q\le p'$,
\[
\bigl\||x|^{d_{k}(1/p'-1/q)}\min\{1,(\delta|x|)^{r}\}\mathcal{F}_{k}(f)\bigr\|_{q}\lesssim
\omega_r(f,\delta)_{p}
\]
and
\[
\biggl(\sum_{j\in
\mathbb{Z}}\min\{1,(2^{j}\delta)^{2r}\}\bigl\|\mathcal{F}_{k}(f)\chi_{j}\bigr\|_{p'}^{2}\biggr)^{1/2}\lesssim
\omega_r(f,\delta)_{p}.
\]

\textup{(2)} If $2\le p<\infty$, $p'\le q\le p$, and $f\in
\mathcal{S}'(\mathbb{R}^d)$ is such that
$|\Cdot|^{d_{k}(1/p'-1/q)}\mathcal{F}_{k}(f)\in L^q(\mathbb{R}^d,d\mu_k)$,
then $f\in L^p(\mathbb{R}^d, d\mu_k)$ and
\begin{equation}\label{omega-pitt}
\omega_r(f,\delta)_{p}\lesssim
\bigl\||x|^{d_{k}(1/p'-1/q)}\min\{1,(\delta|x|)^{r}\}\mathcal{F}_{k}(f)\bigr\|_{q}.
\end{equation}

If $2\le p<\infty$ and  $f\in \mathcal{S}'(\mathbb{R}^d)$ is such that
$\bigl(\sum_{j\in
\mathbb{Z}}\,\bigl\|\mathcal{F}_{k}(f)\chi_{j}\bigr\|_{p'}^{2}\bigr)^{1/2}<\infty$,
then $f\in L^p(\mathbb{R}^d, d\mu_k)$ and
\[
\omega_r(f,\delta)_{p}\lesssim \biggl(\sum_{j\in
\mathbb{Z}}\min\{1,(2^{j}\delta)^{2r}\}\bigl\|\mathcal{F}_{k}(f)\chi_{j}\bigr\|_{p'}^{2}\biggr)^{1/2}.
\]
\end{theorem}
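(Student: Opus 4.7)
The plan is to combine the Pitt-type and Kellogg-type Fourier--Dunkl inequalities derived earlier in the paper with the $K$-functional equivalence $\omega_r(f,\delta)_p\asymp K_r(f,\delta)_p$ from \eqref{eq3} and the identity \eqref{omega-delta}. The latter reduces $\omega_r(f,\delta)_p$ to $\|\varDelta_\delta^r f\|_p$; on the Dunkl--Fourier side $\varDelta_\delta^r$ is a multiplier operator with symbol $m_\delta$ satisfying the pointwise bound $|m_\delta(y)|\lesssim\min\{1,(\delta|y|)^r\}$, while $(-\Delta_k)^{r/2}$ has symbol $|y|^r$.

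For part (1), I start from any decomposition $f=g+(f-g)$ with $(-\Delta_k)^{r/2}g\in L^p$ and apply the Pitt--Hausdorff--Young--Dunkl inequality
\[
\bigl\||y|^{d_k(1/p'-1/q)}\mathcal{F}_k(h)\bigr\|_q\lesssim \|h\|_p,\quad 1<p\le 2,\ p\le q\le p',
\]
both to $h=f-g$ and, with the extra factor $|y|^r$, to $h=(-\Delta_k)^{r/2}g$. I then split the target $L^q$ norm at $|y|=1/\delta$: on $|y|>1/\delta$ the factor $\min\{1,(\delta|y|)^r\}$ equals $1$ and $|y|^{d_k(1/p'-1/q)}\le \delta^r|y|^{d_k(1/p'-1/q)+r}$; on $|y|\le 1/\delta$ the factor equals $(\delta|y|)^r$ and the bound $|y|^r\le\delta^{-r}$ absorbs the $f-g$ contribution. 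In both regions the estimate reduces to a constant multiple of $\|f-g\|_p+\delta^r\|(-\Delta_k)^{r/2}g\|_p$, and infimum over $g$ yields the bound by $K_r(f,\delta)_p\asymp\omega_r(f,\delta)_p$. The Kellogg-type square-function assertion follows from exactly the same splitting, with Pitt replaced by the Kellogg--Fourier--Dunkl inequality and the outer $L^q$ norm replaced by $\ell^2(L^{p'})$ over the dyadic annuli $\chi_j$; the weight $|y|^r$ then becomes $2^{jr}$ on the $j$-th annulus.

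For part (2), I invoke the dual Pitt-type Dunkl inequality
\[
\|h\|_p\lesssim \bigl\||y|^{d_k(1/p'-1/q)}\mathcal{F}_k(h)\bigr\|_q,\quad 2\le p<\infty,\ p'\le q\le p,
\]
applied first to $h=f$, which combined with the hypothesis yields $f\in L^p(\mathbb{R}^d,d\mu_k)$, and then to $h=\varDelta_\delta^r f$. Writing $\mathcal{F}_k(\varDelta_\delta^r f)=m_\delta\mathcal{F}_k(f)$, using the pointwise bound on $m_\delta$, and invoking $\omega_r(f,\delta)_p\asymp\|\varDelta_\delta^r f\|_p$ from \eqref{omega-delta} yield \eqref{omega-pitt} at once. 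The $\ell^2$-statement is proved identically from the dual Kellogg--Fourier--Dunkl inequality, the weight $\min\{1,(2^j\delta)^{2r}\}$ emerging from $|m_\delta(y)|^2\lesssim\min\{1,(2^j\delta)^{2r}\}$ on the $j$-th annulus.

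The main obstacle is twofold. In part (1), the multiplier $m_\delta$ of $\varDelta_\delta^r$ does not admit a matching pointwise lower bound $|m_\delta(y)|\gtrsim\min\{1,(\delta|y|)^r\}$ (it has zeros), ruling out a direct application of Pitt to $\varDelta_\delta^r f$; the $K$-functional detour is therefore essential. In both parts, the upper bound $|m_\delta(y)|\lesssim\min\{1,(\delta|y|)^r\}$ for \emph{fractional} $r$ is nontrivial and must be established by interpolating between the trivial $L^\infty$ bound on $m_\delta$ and the vanishing of $m_\delta$ at the origin inherited from the Dunkl generalized translation $T^t$ entering the definition of $\varDelta_\delta^r$.
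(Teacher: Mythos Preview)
Your argument is correct, but for part~(1) you take a detour that the paper avoids. You go through the $K$-functional, splitting $f=g+(f-g)$ and estimating the two pieces separately on $\{|y|\le 1/\delta\}$ and $\{|y|>1/\delta\}$; this works, and the Kellogg version goes through the same way. The paper, by contrast, simply applies Pitt's inequality \eqref{pitt1} (resp.\ Kellogg's inequality \eqref{kell1}) directly to the difference $\varDelta_\delta^r f$, using \eqref{transformdifference1} and \eqref{omega-equiv} together with the \emph{two-sided} pointwise equivalence
\[
1-j_{\lambda_k}(t)\asymp \min\{1,t^{2}\},\qquad t\ge 0,
\]
which yields $(1-j_{\lambda_k}(\delta|y|))^{r/2}\asymp \min\{1,(\delta|y|)^{r}\}$ for every $r>0$. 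This handles parts~(1) and~(2) symmetrically in two lines.

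Your stated reason for the detour is mistaken: you write that ``the multiplier $m_\delta$ of $\varDelta_\delta^r$ does not admit a matching pointwise lower bound \dots\ (it has zeros)''. That is true in the classical translation setting ($\lambda_k=-1/2$, where $j_{-1/2}(t)=\cos t$), but the paper works under the standing assumption $\lambda_k>-1/2$. In that range $j_{\lambda_k}(t)<1$ for every $t>0$ (from the Taylor expansion near $0$ and the decay $|j_{\lambda_k}(t)|\lesssim t^{-\lambda_k-1/2}$ at infinity, plus compactness), so $m_\delta(y)=(1-j_{\lambda_k}(\delta|y|))^{r/2}$ has no zeros away from the origin and the lower bound \emph{does} hold. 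Consequently your final paragraph about needing an interpolation argument for fractional~$r$ is also unnecessary: the bound $(1-j_{\lambda_k}(t))^{r/2}\asymp\min\{1,t^{r}\}$ is immediate once the $r=2$ case is known. What your $K$-functional route buys is robustness---it would survive in settings where the multiplier genuinely vanishes---at the cost of a longer argument here.
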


\begin{remark}
(i) An analogue of Lebesgue-type estimate \eqref{lebesgue} for the Dunkl
transform is given as follows. If $f\in L^1(\mathbb{R}^d,
d\mu_k)$, then we simply have 
\[
\bigl|\mathcal{F}_{k}(f)(x)\bigr|\lesssim
\omega_r\Big(f,\frac{1}{|x|}\Big)_{1}.
\]
This estimate can be  equivalently written as
\[
\bigl\|\min\{1,(\delta|x|)^{r}\}\mathcal{F}_{k}(f)\bigr\|_{\infty}\lesssim
\omega_r(f,\delta)_{1};
\]
see \eqref{omega-lambda} below.

(ii) In Theorem \ref{thm5} one can replace
$\omega_r(f,\delta)_{p}$  the difference  $\|\varDelta_t^rf\|_{p}$;
cf.  \eqref{omega-delta}.
\end{remark}
{To prove this theorem, we need the following Pitt- {and Kellogg-}type
inequalities, which are of interest by themselves.}

\begin{theorem}\label{thm-pitt}
\textup{(1)} If $1<p\le 2$ and $f\in
L^p(\mathbb{R}^d,d\mu_k)$, then for $p\le q\le p'$,
\begin{equation}\label{pitt1}
\bigl\||x|^{d_{k}(1/p'-1/q)}\mathcal{F}_{k}(f)\bigr\|_{q}\lesssim
\|f\|_{p}
\end{equation}
and
\begin{equation}\label{kell1}
\biggl(\sum_{j\in
\mathbb{Z}}\,\bigl\|\mathcal{F}_{k}(f)\chi_{j}\bigr\|_{p'}^{2}\biggr)^{1/2}\lesssim
\|f\|_{p}.
\end{equation}
\textup{(2)} If $2\le p<\infty$, $p'\le q\le p$, and $f\in
\mathcal{S}'(\mathbb{R}^d)$ is such that
$|\Cdot|^{d_{k}(1/p'-1/q)}\mathcal{F}_{k}(f)\in L^q(\mathbb{R}^d,d\mu_k)$, then
\begin{equation}\label{pitt2}
\|f\|_{p}\lesssim \bigl\||x|^{d_{k}(1/p'-1/q)}\mathcal{F}_{k}(f)\bigr\|_{q}.
\end{equation}
If $2\le p<\infty$ and $f\in \mathcal{S}'(\mathbb{R}^d)$ is such that
$\bigl(\sum_{j\in
\mathbb{Z}}\,\bigl\|\mathcal{F}_{k}(f)\chi_{j}\bigr\|_{p'}^{2}\bigr)^{1/2}<\infty$,
then
\begin{equation}\label{kell2}
\|f\|_{p}\lesssim \biggl(\sum_{j\in
\mathbb{Z}}\,\bigl\|\mathcal{F}_{k}(f)\chi_{j}\bigr\|_{p'}^{2}\biggr)^{1/2}.
\end{equation}
\end{theorem}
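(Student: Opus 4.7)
The plan is to prove part (1) directly and then deduce part (2) from it by duality, using that $\mathcal{F}_k$ is unitary on $L^{2}(\mathbb{R}^{d},d\mu_{k})$, so $\mathcal{F}_k^{*}=\mathcal{F}_k^{-1}$ has the same operator norm on weighted spaces as $\mathcal{F}_k$ itself.

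For the Pitt inequality \eqref{pitt1} I would interpolate between the two natural endpoints $q=p'$ and $q=p$. At $q=p'$ the weight is trivial and the bound is the Dunkl Hausdorff--Young inequality $\|\mathcal{F}_k f\|_{p'}\lesssim \|f\|_p$. At $q=p$ the bound becomes the Dunkl--Pitt inequality
\[
\bigl\||x|^{-d_k(2/p-1)}\mathcal{F}_k f\bigr\|_p\lesssim \|f\|_p,
\]
which I would establish by expanding $f$ into its $h$-harmonic components $f(x)=\sum_\nu Y_\nu(x/|x|)f_\nu(|x|)$; on each component $\mathcal{F}_k$ acts as a Hankel--Bessel type transform in the radial variable, for which the analogous Pitt inequality is classical (Heinig--Sinnamon and related works). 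Stein--Weiss complex interpolation between the two endpoints then yields \eqref{pitt1} in the full range $p\le q\le p'$.

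For the Kellogg inequality \eqref{kell1} I would invoke the Dunkl Littlewood--Paley theorem --- the same machinery underlying the proof of Theorem~\ref{thm1}. Setting $\Delta_j f=\mathcal{F}_k^{-1}(\chi_j\mathcal{F}_k f)$, one has $\bigl\|(\sum_j|\Delta_j f|^2)^{1/2}\bigr\|_p\asymp \|f\|_p$ for $1<p<\infty$. Hausdorff--Young applied to each $\Delta_j f$ gives $\|\chi_j\mathcal{F}_k f\|_{p'}\lesssim \|\Delta_j f\|_p$. Squaring, summing, and using the reverse Minkowski inequality (valid because $p/2\le 1$) to swap $\ell^2$ and $L^p$,
\[
\sum_j\|\Delta_j f\|_p^2\le \Bigl\|\Bigl(\sum_j|\Delta_j f|^2\Bigr)^{1/2}\Bigr\|_p^2\lesssim \|f\|_p^2,
\]
which combined with the previous estimate proves \eqref{kell1}.

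Part (2) is then obtained by duality. For \eqref{pitt2}, apply \eqref{pitt1} with the conjugate exponents $\tilde p=p'$ and $\tilde q=q'$ (one checks $\tilde p\le 2$ and $\tilde p\le\tilde q\le\tilde p'$) to get that $\mathcal{F}_k\colon L^{p'}(d\mu_k)\to L^{q'}(|x|^{q'd_k(1/p-1/q')}\,d\mu_k)$ is bounded. Its adjoint, equal to $\mathcal{F}_k^{-1}$ and of the same norm, then maps the dual weighted $L^q$ space to $L^p$; the identity $1/p'-1/q=1-1/p-1/q=1/q'-1/p$ ensures that the resulting weight is exactly $|x|^{d_k(1/p'-1/q)}$, delivering \eqref{pitt2}. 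For \eqref{kell2}, dualizing the bounded operator $f\mapsto(\chi_j\mathcal{F}_k f)_j$ from $L^p$ into $\ell^2(L^{p'})$ and invoking the self-duality of $\ell^2$ yields the inequality with $P=p'\ge 2$. I expect the main technical obstacle to be the endpoint Dunkl--Pitt inequality at $q=p$: although the $h$-harmonic reduction to the Hankel transform is standard in the Dunkl setting, the admissibility of the Stein--Weiss scheme and the exact tracking of the weight exponent throughout the interpolation range require careful verification.
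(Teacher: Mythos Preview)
Your overall architecture---interpolation for Pitt, Littlewood--Paley plus Hausdorff--Young for Kellogg, and duality for part~(2)---matches the paper, but there is a genuine gap in your argument for \eqref{kell1}. You set $\Delta_j f=\mathcal{F}_k^{-1}(\chi_j\mathcal{F}_k f)$ with the \emph{sharp} annular characteristic functions $\chi_j$ and then invoke the square-function equivalence $\bigl\|(\sum_j|\Delta_j f|^2)^{1/2}\bigr\|_p\asymp\|f\|_p$. In dimension $d\ge 2$ this fails already for $k\equiv 0$: by Fefferman's ball-multiplier theorem the characteristic function of a Euclidean ball (hence of a spherical shell) is not an $L^p$-multiplier for $p\ne 2$, so the operators $\Delta_j$ are not even individually bounded on $L^p$, and the square-function estimate you need cannot hold. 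The paper avoids this by working with a \emph{smooth} radial bump $\varphi$ supported in $\{1/2\le|x|\le 3\}$ with $\varphi\equiv 1$ on $\{1\le|x|\le 2\}$, so that $\chi_j\le\varphi_j$ (Lemma~\ref{lem-V}). The smooth Littlewood--Paley inequality gives $\bigl\|(\sum_j|\varphi_j f|^2)^{1/2}\bigr\|_p\lesssim\|f\|_p$; Minkowski with exponent $2/p\ge 1$ turns this into $(\sum_j\|\varphi_j f\|_p^2)^{1/2}\lesssim\|f\|_p$; Hausdorff--Young applied to each $\varphi_j f$ yields $\|\mathcal{F}_k(f)\varphi_j\|_{p'}\lesssim\|\varphi_j f\|_p$; and only now---on the Fourier side, where $\chi_j$ is merely a pointwise minorant of $\varphi_j$---does one pass from $\varphi_j$ to $\chi_j$ at no cost.

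Two minor remarks on the rest. For \eqref{pitt1} your plan is correct but takes a detour: the endpoint $q=p$ is exactly the Hardy--Littlewood inequality already recorded in Proposition~\ref{prop-class}, so no $h$-harmonic expansion or Hankel reduction is needed; Stein interpolation between that and Hausdorff--Young gives the full range in one line, which is what the paper does. For part~(2) your duality arguments are essentially the paper's; the only cosmetic difference is that for \eqref{pitt2} the paper first dualizes the Hardy--Littlewood endpoint to obtain $\|\mathcal{F}_k(f)\|_p\lesssim\bigl\||x|^{d_k(1-2/p)}f\bigr\|_p$ and then interpolates on the $p\ge 2$ side, whereas you dualize the already-interpolated inequality directly.
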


\begin{remark}
(i) It is worth mentioning that for  $2\le q\le p'$ Kellogg-type inequality \eqref{kell1} improves Pitt's inequality
\eqref{pitt1} since
\begin{equation}\label{K-1}
\bigl\||x|^{d_{k}(1/p'-1/q)}\mathcal{F}_{k}(f)\bigr\|_{q}\lesssim \biggl(\sum_{j\in
\mathbb{Z}}\,\bigl\|\mathcal{F}_{k}(f)\chi_{j}\bigr\|_{p'}^{2}\biggr)^{1/2}.
\end{equation}
Similarly, if $p'\le q\le 2$, inequality \eqref{kell2}) sharpens   Pitt's
inequality \eqref{pitt2} since in this case one has
\begin{equation}\label{K-2}
\biggl(\sum_{j\in
\mathbb{Z}}\,\bigl\|\mathcal{F}_{k}(f)\chi_{j}\bigr\|_{p'}^{2}\biggr)^{1/2}\lesssim
\bigl\||x|^{d_{k}(1/p'-1/q)}\mathcal{F}_{k}(f)\bigr\|_{q}.
\end{equation}
It is easy to construct  examples of  functions showing that the behavior of the left-hand and right-hand sides in 
 \eqref{K-1} and \eqref{K-2} is different; see Remark~\ref{rem-pitt-kell} below.
\end{remark}

(ii)
Pitt's inequalities are well known in the non-weighted case ($k\equiv 0$); see,
e.g., \cite{BH03,GLT11,ind}. Taking $q=p'$ and $q=p$, \eqref{pitt1} and
\eqref{pitt2} become analogues of the Hausdorff--Young and Hardy--Littlewood
inequalities for Dunkl transform; see \cite{AASS09}. Below we give a simple proof of
Theorem~\ref{thm-pitt} based on the interpolation technique \cite{Ste56} and
on the Hardy--Littlewood inequality
\cite{AASS09}.
See also \cite{AM15} for some extensions of the Hardy--Littlewood inequality.

For trigonometric series  $f(x)\sim \sum_{n\in \mathbb{Z}\setminus
\{0\}}\widehat{f}_{n}\,e^{inx}$ Kellogg's inequality  \cite{Ke71} states  that for
 $1<p\le 2$
\[
\biggl(\sum_{n\in \mathbb{Z}\setminus
\{0\}}|\widehat{f}_{n}|^{p'}\biggr)^{1/p'}\le
\biggl(\sum_{j=0}^{\infty}\Big(\sum_{2^{j}\le
|n|<2^{j+1}}|\widehat{f}_{n}|^{p'}\Big)^{2/p'}\biggr)^{1/2}\lesssim
\|f\|_{L^{p}(\mathbb{T})},
\]
improving the Hausdorff--Young inequality. The reverse estimates are valid for
$2\le p<\infty$. The  example $\sum_{l=1}^{N}l^{-1/2}\cos 2^{l}x$ shows the
advantages to work with Kellogg's inequality rather than with
Hausdorff--Young's inequality.
For Fourier transforms on $\mathbb{R}^{d}$ Kellogg-type estimate was obtained by Kurtz \cite{Ku80}.


%


\subsection{Characterizations of the Besov spaces}
It is well known that the classical Besov spaces on $\mathbb{R}^d$ can be
equivalently defined Fourier analytically or  in terms of differences (moduli
of smoothness); see, e.g., \cite[Ch.~3.5]{Tri92}.
 Another characterization of Besov spaces via smoothness of
approximation processes has  been suggested in \cite{KolTik19}.

A detailed study of the  Besov--Dunkl space has started in the last decade. To
define it, the authors  usually use  Fourier-analytical decompositions
\[
\|f\|_{\dot{B}_{p,\vartheta}^{s}}=
\biggl(\sum_{j=-\infty}^{\infty}2^{s\vartheta
j}\|\theta_{j}f\|_{p}^{\vartheta}\biggr)^{1/\vartheta},\quad
\theta_{j}=\eta_{j}-\eta_{j-1},
\]
(we refer to \cite{AASS09}  and
the references therein). Here we would like to obtain various characterizations
of the  Besov--Dunkl space.
Let us introduce the
(inhomogeneous) Besov--Dunkl space in terms of moduli of smoothness. 

Let $1<p<\infty$, $0<\vartheta\le \infty$, and $s>0$. We say that $f\in
L^p(\mathbb{R}^d,d\mu_k)$ belongs to the Besov--Dunkl space
$B_{p,\vartheta}^{s}=B_{p,\vartheta}^{s}(\mathbb{R}^{d},d\mu_{k})$ if
\[
\|f\|_{B_{p,\vartheta}^{s}}= \|f\|_p+
\biggl(\int_{0}^{1}\bigl(t^{-s}\omega_{r}(f,t)_{p}\bigr)^{\vartheta}\,
\frac{dt}{t}\biggr)^{1/\vartheta}<\infty,\quad \vartheta<\infty,\ s<r,
\]
and
\[
\|f\|_{B_{p,\infty}^{s}}=
\|f\|_p+
\sup_{t>0} \frac{\omega_{r}(f,t)_{p}}{t^{s}}<\infty,\quad
\vartheta=\infty,\ s\le r.
\]
Sometimes the space $B_{p,\infty}^{s}$ is called the Lipschitz space.

\begin{remark}
It is important to mention that in light of  \eqref{omega-delta} the modulus of
smoothness in the definitions of the Besov--Dunkl space can be equivalently
replaced by the difference  $\|\varDelta_t^rf\|_{p}$. This sometimes is more
frequently used to define the Besov norm in the classical case ($k\equiv 0$). For the one-dimensional Besov--Dunkl space,  see, e.g.,
 \cite{Ka14}.
 See also \cite{KM12} for more information on  inhomogeneous Besov--Dunkl spaces 
  and their embeddings.

\end{remark}




\begin{theorem}\label{thm-besov}
\textup{(1)} 
The (quasi-)norms $\|f\|_{B_{p,\vartheta}^{s}}$ do not depend on
the choice of $r>s$.
\\
\textup{(2)} 
The following characterizations hold:
\begin{align}
\|f\|_{B_{p,\vartheta}^{s}}&\asymp
\|f\|_p+ \biggl(\sum_{j=0}^{\infty}2^{s\vartheta j}
\bigl(\omega_{r}(f,2^{-j})_{p}\bigr)^{\vartheta}\biggr)^{1/\vartheta}\label{B-omega}\\
&\asymp \|f\|_p+
\biggl(\sum_{j=0}^{\infty}2^{s\vartheta j}
\bigl(E_{2^j}(f)_p\bigr)^\vartheta\biggr)^{1/\vartheta}\label{B-E}\\
&\asymp \|f\|_p+ \biggl(\sum_{j=1}^{\infty}2^{s\vartheta j}
\|f-\eta_{j}f\|_{p}^\vartheta\biggr)^{1/\vartheta}\label{B-eta}\\
&\asymp \|f\|_p+ \biggl(\sum_{j=1}^{\infty}2^{s\vartheta j}
\|\theta_{j}f\|_{p}^\vartheta\biggr)^{1/\vartheta}\label{B-theta}\\
&\asymp \|f\|_p+
\biggl(\sum_{j=-\infty}^{\infty}2^{s\vartheta j}
\|\theta_{j}f\|_{p}^\vartheta\biggr)^{1/\vartheta}
\label{B-B}\\
&\asymp
\|f\|_p+ \biggl(\sum_{j=1}^{\infty}2^{(s-r)\vartheta j}
\bigl\|(-\Delta_{k})^{r/2}P_{j}f\bigr\|_{p}^\vartheta\biggr)^{1/\vartheta},\label{B-P}
\end{align}
where $P_{j}f$ stands for the best approximants $f_{2^{j}}$ or the de la
Vall\'{e}e Poussin type operators~$\eta_{j}f$.
\end{theorem}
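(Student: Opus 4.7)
The plan is to establish the equivalences in the order they appear, treating them as a cycle of estimates, and then to derive the $r$-independence as a consequence.

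First, I would prove \eqref{B-omega}: since $\omega_r(f,\cdot)_p$ is monotone and essentially constant on dyadic blocks $[2^{-j-1},2^{-j}]$, the integral definition of $\|f\|_{B^s_{p,\vartheta}}$ and its dyadic discretization are equivalent by standard arguments. Next I would pass to \eqref{B-E} via the Jackson inequality \eqref{eq1} (giving $E_{2^j}(f)_p\lesssim \omega_r(f,2^{-j})_p$) and the inverse inequality \eqref{eq2}. The latter gives $\omega_r(f,2^{-n})_p\lesssim 2^{-nr}\sum_{j=0}^n(j+1)^{r-1}E_j(f)_p$; splitting the inner sum dyadically and applying the discrete Hardy inequality for $\ell^\vartheta$ sequences converts $\sum_n 2^{sn\vartheta}\omega_r(f,2^{-n})_p^\vartheta$ into $\sum_j 2^{sj\vartheta}E_{2^j}(f)_p^\vartheta$ (this is where one uses $r>s$, i.e., $s-r<0$, to ensure convergence of the Hardy-type estimate).

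To reach \eqref{B-eta} I would sandwich: on the one hand $E_{2^j}(f)_p\le \|f-\eta_j f\|_p$ because $\eta_j f\in \mathcal{B}_{p,k}^{2^j}$; on the other, $\eta_j$ reproduces functions with Dunkl spectrum in $|y|\le 2^{j-1}$, so $\|f-\eta_j f\|_p\lesssim E_{2^{j-1}}(f)_p$ (standard boundedness of the de la Vallée Poussin multiplier operator, plus the fact that $\eta_j$ is a bounded Dunkl multiplier on $L^p$ for $1<p<\infty$; this boundedness is stated/used in Section~\ref{sec4} of the paper). From \eqref{B-eta} I would obtain \eqref{B-theta} via the telescoping identity $\theta_j f=(f-\eta_{j-1}f)-(f-\eta_j f)$ (upper bound) and, conversely, $f-\eta_j f=\sum_{i>j}\theta_i f$ (plus a harmless tail term that is $0$ since $\eta_j f\to f$ in $L^p$) followed by a discrete Hardy inequality. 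The step from \eqref{B-theta} to \eqref{B-B} is achieved by controlling $\sum_{j\le 0}2^{sj\vartheta}\|\theta_j f\|_p^\vartheta$: since $\|\theta_j f\|_p\lesssim \|f\|_p$ uniformly in $j$ (Dunkl multiplier boundedness) and $s>0$, the geometric sum over $j\le 0$ is $\lesssim \|f\|_p^\vartheta$, so the extra $\|f\|_p$ summand in the inhomogeneous norm absorbs the negative tail. Finally \eqref{B-P} follows directly from Theorem~\ref{thm3}: inserting its two-sided estimate into \eqref{B-omega} and rearranging via a discrete Hardy inequality (now with weight $2^{-rj}$ against $\omega_r(f,2^{-n})_p$) produces $\sum_j 2^{(s-r)j\vartheta}\|(-\Delta_k)^{r/2}P_j f\|_p^\vartheta$.

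Part (1), the independence of the norm from $r>s$, I would deduce at the end from the sharp Marchaud and reverse Marchaud inequalities \eqref{eq5} and \eqref{eq7} of Corollary~\ref{thm2}. Plugging those into the discretized form \eqref{B-omega} and applying a discrete Hardy inequality once more — noting that the side conditions $s<r$ and $s<m$ are in force, and the $\|f\|_p/n^r$ term is absorbed by the $\|f\|_p$ summand — shows that replacing $r$ by any other $m>s$ gives an equivalent (quasi-)norm. For the Lipschitz case $\vartheta=\infty$, $s=r$ is admissible, and here the same Marchaud argument still works because the supremum version of Hardy's inequality has no boundary issue.

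The step I expect to be the main obstacle is the Hardy-type summation lemmas with the correct exponent bookkeeping for all $\vartheta\in(0,\infty]$, particularly $\vartheta<1$ where one must use a $\vartheta$-triangle inequality rather than Minkowski; and, closely related, making sure that the passages involving $\theta_j$ and $\eta_j$ are justified by the $L^p$-boundedness of the corresponding Dunkl multipliers for $1<p<\infty$, which in the Dunkl setting is not a triviality and has to be invoked from the Littlewood--Paley theory used already for Theorems~\ref{thm1} and \ref{thm3}.
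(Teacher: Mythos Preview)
Your proposal is correct, and the overall skeleton (dyadic discretization, Jackson/properties of $\eta_j,\theta_j$, Hardy-type lemmas, Theorem~\ref{thm3}) matches the paper's. Two points of genuine difference are worth noting.

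First, for Part~(1) the paper does not use Marchaud or reverse Marchaud at all. It simply observes that once \eqref{B-E}, \eqref{B-eta}, \eqref{B-theta}, \eqref{B-B} are established, none of those expressions contains $r$; hence the equivalence $\|f\|_{B^s_{p,\vartheta}}\asymp$ \eqref{B-E} already forces $r$-independence. Your Marchaud route works, but is an unnecessary detour: the sharp inequalities of Corollary~\ref{thm2} are a stronger tool than the problem requires, and they introduce the extra exponents $\max(p,2)$, $\min(p,2)$ that you then have to wash out with Hardy.

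Second, in closing the cycle \eqref{B-omega}$\to$\eqref{B-theta} the paper does not pass through the inverse inequality~\eqref{eq2}. Instead it goes directly from $\omega_r$ to $\theta_j$: writing $f=\sum_{l\ge0}\theta_l f$ (with $\theta_0:=\eta_0$), it bounds $\omega_r(f,2^{-j})_p\lesssim \sum_{l\le j}2^{r(l-j)}\|\theta_l f\|_p+\sum_{l\ge j}\|\theta_l f\|_p$ using the Bernstein-type estimate \eqref{ber-omega} on the low-frequency pieces and \eqref{omega-norma} on the tail, then applies both Hardy inequalities \eqref{har-1}, \eqref{har-2}. This is a little cleaner than your route via \eqref{eq2}, since it avoids the non-dyadic sum $\sum_{j=0}^n(j+1)^{r-1}E_j(f)_p$ and the attendant regrouping; but your argument is equally valid once the dyadic regrouping is carried out. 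Your treatment of \eqref{B-eta}$\leftrightarrow$\eqref{B-theta}, \eqref{B-B}, and \eqref{B-P} coincides with the paper's.
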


Let us also give necessary (for $1<p\le 2$) and sufficient  (for $2\le p<\infty$) conditions for $f$ to belong to the
Besov--Dunkl space given in terms of behavior of its Fourier--Dunkl transform.

\begin{theorem}\label{thm-besov-pitt}
\textup{(1)}
If $1<p\le 2$ and $f\in B_{p,\vartheta}^{s}$, then
\[
\bigl\|\mathcal{F}_{k}(f)\bigr\|_{p'}+ \biggl(\sum_{j=0}^{\infty}2^{s\vartheta
j} \bigl\|\mathcal{F}_{k}(f)\chi_{j}\bigr\|_{p'}^{\vartheta}\biggr)^{1/\vartheta}\lesssim
\|f\|_{B_{p,\vartheta}^{s}}.
\]
\textup{(2)}  If $2\le p<\infty$ and $f\in \mathcal{S}'(\mathbb{R}^d)$ is such
that  $\mathcal{F}_{k}(f)\in L^{p'}(\mathbb{R}^d,d\mu_k)$ and
$\bigl(\sum_{j=0}^{\infty}2^{s\vartheta
j} \bigl\|\mathcal{F}_{k}(f)\chi_{j}\bigr\|_{p'}^{\vartheta}\bigr)^{1/\vartheta}<\infty$, then
\[
\|f\|_{B_{p,\vartheta}^{s}}\lesssim
\bigl\|\mathcal{F}_{k}(f)\bigr\|_{p'}+ \biggl(\sum_{j=0}^{\infty}2^{s\vartheta
j} \bigl\|\mathcal{F}_{k}(f)\chi_{j}\bigr\|_{p'}^{\vartheta}\biggr)^{1/\vartheta}.
\]
\end{theorem}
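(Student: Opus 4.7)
The plan is to combine the Fourier-analytic inequalities of Theorems \ref{thm5} and \ref{thm-pitt} with the equivalent characterizations of the Besov--Dunkl norm provided by Theorem \ref{thm-besov}.

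For part (1), with $1<p\le 2$, I use the discrete characterization \eqref{B-omega} of Theorem \ref{thm-besov}, so it suffices to control both summands on the left by $\|f\|_p$ and by the weighted $\ell^{\vartheta}$-sum of moduli of smoothness. The Hausdorff--Young inequality, which is the case $q=p'$ of \eqref{pitt1}, immediately yields $\|\mathcal{F}_k(f)\|_{p'}\lesssim\|f\|_p$. The Kellogg-type bound of Theorem \ref{thm5}(1) with $\delta=2^{-n}$, after keeping only the $j=n$ summand on its left-hand side, produces the pointwise estimate
\[
\|\mathcal{F}_k(f)\chi_n\|_{p'}\lesssim \omega_r(f,2^{-n})_p,
\]
which after weighted $\ell^{\vartheta}$-summation over $n\ge 0$ gives the required inequality via \eqref{B-omega}.

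For part (2), with $2\le p<\infty$, I would use the Littlewood--Paley type characterization \eqref{B-theta} of Theorem \ref{thm-besov} and bound $\|f\|_p$ and each $\|\theta_j f\|_p$ separately. Reverse Hausdorff--Young (the case $q=p'$ of \eqref{pitt2}) gives $\|f\|_p\lesssim\|\mathcal{F}_k(f)\|_{p'}$ and in particular $f\in L^p(\mathbb{R}^d,d\mu_k)$. Since $\mathcal{F}_k(\theta_j f)=(\eta_j-\eta_{j-1})\mathcal{F}_k(f)$ is supported in the dyadic shell $\{2^{j-2}\le|x|\le 2^j\}$ and $\eta_j-\eta_{j-1}$ is uniformly bounded, reverse Hausdorff--Young applied once more yields
\[
\|\theta_j f\|_p \lesssim \|\mathcal{F}_k(f)\chi_{j-2}\|_{p'}+\|\mathcal{F}_k(f)\chi_{j-1}\|_{p'}.
\]
Raising to the $\vartheta$-th power, multiplying by $2^{sj\vartheta}$, and re-indexing absorbs all terms with $j-2\ge 0$ into the target sum, while the leftover contributions involving the annuli indexed by $j=-1$ are crudely controlled by $\|\mathcal{F}_k(f)\|_{p'}^{\vartheta}$.

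The main technical point is to justify the distributional identity $\mathcal{F}_k(\theta_j f)=(\eta_j-\eta_{j-1})\mathcal{F}_k(f)$ together with the support behaviour of $\eta_j-\eta_{j-1}$ under the standing assumption $\mathcal{F}_k(f)\in L^{p'}$; both are consequences of the construction of $\eta_j$ and of the Dunkl transform theory recalled earlier in the paper. Beyond this bookkeeping I do not expect any substantial obstacle: once Hausdorff--Young in both directions, the dyadic Kellogg-type bound of Theorem \ref{thm5}, and the characterization \eqref{B-theta} are aligned, both inequalities reduce to a straightforward geometric-series manipulation that works uniformly in $\vartheta\in(0,\infty]$.
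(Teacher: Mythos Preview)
Your proof is correct and takes a genuinely different, more direct route than the paper.

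For part (1), the paper applies the Pitt-type estimate of Theorem~\ref{thm5}(1) with $q=p'$ to the difference $\varDelta_{2^{-j}}^r f$, obtaining $\omega_r(f,2^{-j})_p\gtrsim\bigl\|\min\{1,(2^{-j}|x|)^r\}\mathcal{F}_k(f)\bigr\|_{p'}$, and then establishes and uses a separate equivalence \eqref{min-sum} (proved via Hardy's inequalities \eqref{har-1}--\eqref{har-2}) to pass from this integral quantity to the dyadic sum $\sum_j 2^{s\vartheta j}\|\mathcal{F}_k(f)\chi_j\|_{p'}^{\vartheta}$. You instead extract a single block from the Kellogg estimate, getting $\|\mathcal{F}_k(f)\chi_n\|_{p'}\lesssim\omega_r(f,2^{-n})_p$ directly, which after summation is all that is needed.

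For part (2), the paper again works through the modulus of smoothness via \eqref{omega-pitt} and the decomposition \eqref{min-sum}. You bypass moduli of smoothness entirely: you use the characterization \eqref{B-theta}, note that $\mathcal{F}_k(\theta_j f)$ is supported in the union of two dyadic annuli, and apply the reverse Hausdorff--Young inequality \eqref{pitt2} with $q=p'$ to each $\theta_j f$.

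Your approach is shorter and avoids the Hardy-inequality manipulations behind \eqref{min-sum}; the paper's approach has the advantage of isolating \eqref{min-sum} as an independent two-sided equivalence between the ``smooth-cutoff'' Pitt quantity and the dyadic $\ell^{\vartheta}$ sum, which may be useful elsewhere.
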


As a simple application of Theorem \ref{thm-besov-pitt} we establish
the following characterization of the Besov--Dunkl space for $p=2$. 

\begin{corollary}
For $f\in L^2(\mathbb{R}^d,d\mu_k)$ we have
\[
\|f\|_{B_{2,\vartheta}^{s}}\asymp \bigl\|\mathcal{F}_{k}(f)\bigr\|_2+
\biggl(\sum_{j=0}^{\infty}2^{s\vartheta
j} \bigl\|\mathcal{F}_{k}(f)\chi_{j}\bigr\|_{2}^{\vartheta}\biggr)^{1/\vartheta}.
\]
\end{corollary}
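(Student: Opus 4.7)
The plan is to derive the corollary as an immediate specialization of Theorem~\ref{thm-besov-pitt} at the self-dual exponent $p=2$, where $p'=p=2$ collapses parts (1) and (2) into a single symmetric characterization. The only additional ingredient is the Plancherel identity $\|\mathcal{F}_k(f)\|_2 = \|f\|_2$ for the Dunkl transform on $L^2(\mathbb{R}^d, d\mu_k)$, which ensures that the structural hypothesis of part~(2) is automatic for every $f\in L^2(\mathbb{R}^d, d\mu_k)$.

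First I would apply part~(1) of Theorem~\ref{thm-besov-pitt} with $p=2$. Since the Besov--Dunkl (quasi-)norm is defined for $f\in L^p$, and we may assume $\|f\|_{B_{2,\vartheta}^s}<\infty$ (otherwise the $\lesssim$ direction is vacuous), the upper estimate
\[
\bigl\|\mathcal{F}_{k}(f)\bigr\|_{2}+ \biggl(\sum_{j=0}^{\infty}2^{s\vartheta j}\,\bigl\|\mathcal{F}_{k}(f)\chi_{j}\bigr\|_{2}^{\vartheta}\biggr)^{1/\vartheta}\lesssim \|f\|_{B_{2,\vartheta}^{s}}
\]
is exactly the content of that part.

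For the reverse direction I would verify that any $f\in L^2(\mathbb{R}^d, d\mu_k)$ fulfills the hypotheses of part~(2): the embedding $L^2(\mathbb{R}^d,d\mu_k)\hookrightarrow \mathcal{S}'(\mathbb{R}^d)$ is standard, and Plancherel gives $\mathcal{F}_k(f)\in L^{2}(\mathbb{R}^d,d\mu_k)=L^{p'}(\mathbb{R}^d,d\mu_k)$ with $\|\mathcal{F}_k(f)\|_2=\|f\|_2$. Consequently, whenever the weighted $\ell^\vartheta$-sum on the right-hand side of the claim is finite, part~(2) applies and yields
\[
\|f\|_{B_{2,\vartheta}^{s}} \lesssim \bigl\|\mathcal{F}_{k}(f)\bigr\|_{2}+ \biggl(\sum_{j=0}^{\infty}2^{s\vartheta j}\,\bigl\|\mathcal{F}_{k}(f)\chi_{j}\bigr\|_{2}^{\vartheta}\biggr)^{1/\vartheta}.
\]
Combining the two bounds produces the stated equivalence, with both quantities simultaneously finite or simultaneously infinite.

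There is essentially no obstacle beyond bookkeeping here: the statement is a corollary in the strongest sense, because at $p=2$ the Pitt-/Kellogg-type necessary condition from part~(1) and sufficient condition from part~(2) of Theorem~\ref{thm-besov-pitt} become quantitatively identical, with Plancherel supplying the self-duality that makes the hypotheses of part~(2) trivially verifiable on $L^2(\mathbb{R}^d,d\mu_k)$.
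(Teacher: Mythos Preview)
Your proposal is correct and matches the paper's own treatment: the paper presents this corollary as ``a simple application of Theorem~\ref{thm-besov-pitt}'' with no separate proof, precisely because at $p=2$ parts~(1) and~(2) coincide and Plancherel makes the hypothesis $\mathcal{F}_k(f)\in L^{p'}$ automatic. Your bookkeeping about simultaneous finiteness/infiniteness is a sound way to close the argument.
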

Moreover, taking $\vartheta=\infty$,
we arrive at a Titchmarsh type result for the Lipschitz space
$B_{2,\infty}^{s}$:
\begin{equation}\label{tit}
\|f\|_{B_{2,\infty}^{s}}\asymp \bigl\|\mathcal{F}_{k}(f)\bigr\|_2+ \sup_{j\in
\mathbb{Z}_{+}}2^{sj} \bigl\|\mathcal{F}_{k}(f)\chi_{j}\bigr\|_{2},
\end{equation}
extending the main result of \cite{Ma10}.
Recall that the classical Titchmarsh theorem
\cite[Theorem~85]{Ti48} states that for $0<\alpha<1$ the condition
\[
\|f({\cdot}+h)-f({\cdot})\|_{L^2(\mathbb{R})}=O(h^\alpha)\quad \text{as}\quad
h\to 0
\]
is equivalent to the condition
\[
\biggl(\int_{t<|\xi|}|\widehat{f}\,(\xi)|^2\,d\xi\biggr)^{1/2}=O(t^{-\alpha})\quad
\text{as}\quad t\to \infty.
\]
The latter can be equivalently written as
$
\sup_{j\in \mathbb{Z}_{+}}2^{j\alpha} \|\widehat{f}\,\chi_{j}\|_{2}<\infty;
$ 
cf. the right-hand side of \eqref{tit}.

\subsection{Structure of the paper}
The paper is organized as follows.
In the next section, we introduce some basic notation and present important auxiliary results of the Dunkl harmonic analysis. 
In Section~\ref{sec3}, we introduce needed spaces of distributions. Moreover,
we define the fractional power of the Dunkl Laplacian, the fractional modulus
of smoothness, and the fractional $K$-functional associated to the Dunkl
weight.

Section~\ref{sec4} contains the Littlewood--Paley-type inequalities in the
Dunkl setting. In Section~\ref{sec5}, we prove the sharp direct and inverse
theorems of approximation theory in the spaces
$L^{p}(\mathbb{R}^{d},d\mu_{k})$, namely Theorem \ref{thm1} and Corollary
\ref{thm2}. In Section~\ref{sec6}, we derive estimates of the modulus of
smoothness of a function $f$ via the fractional powers of the Dunkl Laplacian
of entire functions $f_{\sigma}$ and $\eta_{j}f$.

Pitt- and Kellog-type estimates given in Theorems \ref{thm5} and \ref{thm-pitt}
and the results on Besov--Dunkl spaces
are proved in Section~\ref{sec7}.

\bigskip
\section{Elements of Dunkl harmonic analysis}\label{sec2}

In this section, we recall the basic notation and results of the Dunkl harmonic
analysis (see, e.g., \cite{Ros99,Ros03a,GorIvaTik19}).

The Dunkl kernel $e_{k}(x, y)=E_{k}(x, iy)$ is a unique
solution of the system
\[
\nabla_{k}f(x)=iyf(x),\quad f(0)=1,
\]
where $\nabla_{k}=(D_{1,k},\dots,D_{d,k})$ is the Dunkl gradient. The Dunkl
kernel plays the role of a generalized exponential function and its properties
are similar to those of the classical exponential function $e_{0}(x, y)=e^{i(x,
y)}$. Several basic properties follow from the integral
representation~\cite{Ros99}
\[
e_k(x,y)=\int_{\mathbb{R}^d}e^{i(\xi,y)}\,d\mu_x^k(\xi),
\]
where $\mu_x^k$ is a probability Borel measure and
$\operatorname{supp}\mu_x^k\subset \operatorname{co}{}(\{gx\colon g\in
G(R)\})$. In parti\-cular,
\[
|e_k(x,y)|\leq 1,\quad e_{k}(x, y)=e_{k}(y, x), \quad e_{k}(-x, y)=\overline{e_{k}(x, y)}.
\]

For $f\in L^{1}(\mathbb{R}^{d},d\mu_{k})$, the Dunkl transform is defined by
\[
\mathcal{F}_{k}(f)(y)=\int_{\mathbb{R}^{d}}f(x)\overline{e_{k}(x,
y)}\,d\mu_{k}(x).
\]
For $k\equiv0$ we recover 
 the classical Fourier transform $\mathcal{F}$.

As usual, by $\mathcal{A}_k $ we denote the Wiener class
\[
\mathcal{A}_k=\bigl\{f\in L^{1}(\mathbb{R}^{d},d\mu_{k})\cap C_b(\mathbb{R}^d)\colon
\mathcal{F}_{k}(f)\in L^{1}(\mathbb{R}^{d},d\mu_{k})\bigr\}.
\]
Several basic properties of the Dunkl transform are  collected in the following result.

\begin{proposition}[\cite{Ros03a}]\label{prop2.1}
\textup{(1)} For $f\in L^{1}(\mathbb{R}^{d},d\mu_{k})$, one has $\mathcal{F}_{k}(f)\in
C_0(\mathbb{R}^d)$.

\smallbreak
\textup{(2)} If $f\in\mathcal{A}_k$, then the following pointwise inversion
formula holds:
\[
f(x)=\int_{\mathbb{R}^{d}}\mathcal{F}_{k}(f)(y)e_{k}(x, y)\,d\mu_{k}(y).
\]

\textup{(3)} The Dunkl transform leaves the Schwartz space
$\mathcal{S}(\mathbb{R}^d)$ invariant.

\smallbreak
\textup{(4)} The Dunkl transform extends to a unitary self-adjoint operator in
$L^{2}(\mathbb{R}^{d},d\mu_{k})$,
$\mathcal{F}_{k}^{-1}(f)(x)=\mathcal{F}_{k}(f)(-x)$.
\end{proposition}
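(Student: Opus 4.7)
The plan is to follow the standard route through Schwartz space: first prove the invariance (3) via the intertwining relations, use it together with density to complete (1), then derive Plancherel (4) on $\mathcal{S}(\mathbb{R}^d)$ before extending, and finally deduce the pointwise inversion (2) by a Gaussian regularisation.

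Part (1) is essentially automatic: the bound $|e_k(x,y)|\le 1$ gives $\|\mathcal{F}_k(f)\|_\infty\le \|f\|_1$, and dominated convergence combined with the continuity of $y\mapsto e_k(x,y)$ yields $\mathcal{F}_k(f)\in C_b(\mathbb{R}^d)$; the vanishing at infinity then follows from density of $\mathcal{S}(\mathbb{R}^d)$ in $L^1(\mathbb{R}^d,d\mu_k)$ together with (3). For (3), I would first establish the two intertwining identities
\[
\mathcal{F}_k(D_{j,k}f)(y)=iy_j\mathcal{F}_k(f)(y),\qquad
\mathcal{F}_k(x_jf)(y)=iD_{j,k}\mathcal{F}_k(f)(y),\qquad f\in\mathcal{S}(\mathbb{R}^d),
\]
the first from the eigenfunction equation $\nabla_k^{(x)}e_k(x,y)=iy\,e_k(x,y)$ via an adjointness argument for the Dunkl operators (with no boundary terms thanks to Schwartz decay), the second as its formal dual. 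Iterating, every Schwartz seminorm of $\mathcal{F}_k(f)$ is controlled by finitely many seminorms of $f$, so $\mathcal{F}_k$ maps $\mathcal{S}(\mathbb{R}^d)$ into itself.

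The pivot for (4) is the identity $\mathcal{F}_k(e^{-|{\cdot}|^2/2})(y)=e^{-|y|^2/2}$, which is consistent with the normalisation of $c_k$ and can be verified by checking that both sides satisfy the same first-order system under the intertwining relations above. Rescaling gives the Dunkl heat kernel, which acts as an approximate identity; combined with Fubini this yields the Parseval identity and the inversion $\mathcal{F}_k^2 f(x)=f(-x)$ on $\mathcal{S}(\mathbb{R}^d)$. Density of $\mathcal{S}(\mathbb{R}^d)$ in $L^2(\mathbb{R}^d,d\mu_k)$ extends $\mathcal{F}_k$ to a unitary operator, and self-adjointness follows from the symmetries $e_k(x,y)=e_k(y,x)$ and $\overline{e_k(x,y)}=e_k(-x,y)$.

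For (2), given $f\in\mathcal{A}_k$, I would insert the regulariser $e^{-\varepsilon|y|^2/2}$ into the candidate inverse integral, apply Fubini to rewrite it as a Dunkl convolution of $f$ with the heat kernel $p_\varepsilon$, and let $\varepsilon\to 0^+$; integrability of $\mathcal{F}_k(f)$ and continuity of $f\in C_b(\mathbb{R}^d)$ (both built into the definition of $\mathcal{A}_k$) justify the pointwise limit. The main obstacle is the Schwartz computation, specifically establishing $\mathcal{F}_k(e^{-|{\cdot}|^2/2})=e^{-|{\cdot}|^2/2}$ and deducing Plancherel from it: unlike in the Euclidean case, where one can shift contours, the Dunkl transform admits no such shortcut, and one must genuinely exploit the Dunkl intertwining operator and the Macdonald--Mehta--Selberg normalisation.
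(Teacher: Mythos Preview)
The paper does not prove this proposition: it is quoted from R\"osler's lecture notes \cite{Ros03a} and stated without argument. Your sketch is correct and in fact reproduces the standard route taken in that reference---intertwining identities for $D_{j,k}$ and multiplication to get Schwartz invariance, the Gaussian fixed-point $\mathcal{F}_k(e^{-|\cdot|^2/2})=e^{-|\cdot|^2/2}$ as the anchor for Plancherel and inversion on $\mathcal{S}(\mathbb{R}^d)$, and a heat-kernel regularisation for the pointwise inversion on $\mathcal{A}_k$---so there is nothing to compare beyond noting that your outline matches the original source.
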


Let $\mathbb{S}^{d-1}=\{x'\in\mathbb{R}^d\colon |x'|=1\}$ be the Euclidean
sphere, and let $d\sigma_k(x')=a_kv_k(x')\,dx'$ be the probability measure on
$\mathbb{S}^{d-1}$. The following formula is well known
\cite[Corollary~2.5]{Ros03b}:
\[
\int_{\mathbb{S}^{d-1}}e_{k}(x, ty')\,d\sigma_k(y')=j_{\lambda_k}(t|x|),\quad
x\in \mathbb{R}^{d},
\]
where $\lambda_{k}$ is given in \eqref{Weighteddimension} and
$j_{\lambda}(t)=2^{\lambda}\Gamma(\lambda+1)t^{-\lambda}J_{\lambda}(t)$ is the
normalized Bessel function.

Let $y\in \mathbb{R}^d$ be given. R\"{o}sler \cite{Ros98} defined a
generalized translation operator $\tau^y$ in $L^{2}(\mathbb{R}^{d},d\mu_{k})$
by the equation
\[
\mathcal{F}_k(\tau^yf)(z)=e_k(y, z)\mathcal{F}_k(f)(z).
\]
Since $|e_k(y, z)|\leq 1$, $\|\tau^y\|_{2\to 2}\leq1$.
The operator $\tau^yf$ is not positive 
and it remains an open question whether $\tau^yf$ is an $L^p$-bounded operator for $p\neq 2$.


Let $t\ge 0$. In \cite{GorIvaTik19}, we have recently defined the different generalized translation
operator $T^t$ in $L^{2}(\mathbb{R}^{d},d\mu_{k})$ by
\[
\mathcal{F}_k(T^tf)(y)=j_{\lambda_k}(t|y|)\mathcal{F}_k(f)(y).
\]
In light of $|j_{\lambda_k}(t)|\leq 1$, we have $\|T^t\|_{2\to 2}\leq1$.

We now list some basic properties of the operator $T^t$, $t\in\mathbb{R}_+$.

\begin{proposition}[\cite{GorIvaTik19,Ros03b}]
\textup{(1)} If $f\in\mathcal{A}_k$, then 
\[
T^tf(x)=\int_{\mathbb{R}^d}j_{\lambda_k}(t|y|)e_k(x,
y)\mathcal{F}_k(f)(y)\,d\mu_k(y)=\int_{\mathbb{S}^{d-1}}\tau^{ty'}f(x)\,d\sigma_k(y').
\]

\smallbreak
\textup{(2)} The operator $T^t$ is positive. If $f\in C_b(\mathbb{R}^d)$, then
\[
T^tf(x)=\int_{\mathbb{R}^d}f(z)\,d\sigma_{x,t}^k(z)\in C_b(\mathbb{R}_+\times\mathbb{R}^d),
\]
where $\sigma_{x,t}^k$ is a probability Borel measure such that
$\operatorname{supp}\sigma_{x,t}^k\subset \bigcup_{g\in G}B_{t}(gx)$. In
particular, $T^t1=1$.

\smallbreak
\textup{(3)} If $f\in \mathcal{S}(\mathbb{R}^d)$, $1\leq p\leq\infty$, then
$\|T^tf\|_{p,d\mu_k}\leq \|f\|_{p,d\mu_k}$ and the operator $T^t$ can be
extended to $L^{p}(\mathbb{R}^{d},d\mu_{k})$ with preservation of the norm.
\end{proposition}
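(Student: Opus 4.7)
The plan is to reduce each assertion to known facts about the Dunkl kernel, R\"{o}sler's translation operator $\tau^y$, and the spherical integration formula cited just above the proposition.

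For part~(1), I would start from the defining Fourier relation $\mathcal{F}_k(T^tf)(y)=j_{\lambda_k}(t|y|)\mathcal{F}_k(f)(y)$. When $f\in\mathcal{A}_k$, the right-hand side lies in $L^1(\mathbb{R}^d,d\mu_k)$ (bounded multiplier times an $L^1$ function), so Proposition~\ref{prop2.1}(2) gives the first displayed identity as an absolutely convergent inverse Dunkl transform. To obtain the second equality, I would substitute the spherical mean identity $j_{\lambda_k}(t|y|)=\int_{\mathbb{S}^{d-1}}e_k(y,ty')\,d\sigma_k(y')$ into that integral, switch order by Fubini (licit because $|e_k|\le 1$ and $\mathcal{F}_k(f)\in L^1$), and recognise the inner $y$-integral as $\tau^{ty'}f(x)$ by applying R\"{o}sler's definition $\mathcal{F}_k(\tau^{ty'}f)(y)=e_k(ty',y)\mathcal{F}_k(f)(y)$ in conjunction with Proposition~\ref{prop2.1}(2).

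Part~(2) is the delicate one, since $\tau^y$ itself is not known to be positive; positivity survives only after spherical averaging. Following R\"{o}sler \cite{Ros03b}, I would combine the integral representation $e_k(x,y)=\int e^{i(\xi,y)}\,d\mu_x^k(\xi)$, with $\mu_x^k$ a probability measure supported in $\operatorname{co}(Gx)$, with the spherical-average form of $T^tf$ derived in part~(1). Averaging $e^{i(\xi,ty')}$ over $y'\in\mathbb{S}^{d-1}$ produces a Bessel kernel that is the Fourier transform of normalised surface measure on the sphere of radius~$t$, and convolving with $\mu_x^k$ in the $\xi$-variable then yields a genuine probability Borel measure $\sigma_{x,t}^k$ with $T^tf(x)=\int f\,d\sigma_{x,t}^k$. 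Since each point of $\operatorname{supp}\mu_x^k\subset\operatorname{co}(Gx)$ is at distance at most $t$ from some $g\cdot x$ after the spherical shift, the support lies in $\bigcup_{g\in G}B_t(gx)$; plugging $f\equiv 1$ confirms both $T^t1=1$ and that $\sigma_{x,t}^k$ is a probability measure. Joint continuity of $(x,t)\mapsto T^tf(x)$ for $f\in C_b$ follows from dominated convergence in this representation.

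Part~(3) is then almost immediate. For $f\in\mathcal{S}(\mathbb{R}^d)$ and $1\le p<\infty$, Jensen's inequality applied to the probability measure $\sigma_{x,t}^k$ gives $|T^tf(x)|^p\le T^t(|f|^p)(x)$. Evaluating the Dunkl transform at the origin yields $\int T^t(|f|^p)\,d\mu_k=j_{\lambda_k}(0)\mathcal{F}_k(|f|^p)(0)=\int|f|^p\,d\mu_k$, so $\|T^tf\|_p\le\|f\|_p$; the case $p=\infty$ is immediate from positivity and $T^t1=1$. Density of $\mathcal{S}(\mathbb{R}^d)$ in $L^p(\mathbb{R}^d,d\mu_k)$ for $1\le p<\infty$ extends $T^t$ to a contraction on the whole space, and the $p=\infty$ bound on $C_b$ is preserved. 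The main obstacle is clearly part~(2): the Fourier-analytic manipulations in parts~(1) and~(3) are essentially formal, whereas the positivity and the precise support description rest on R\"{o}sler's quantitative integral representation of the Dunkl kernel and cannot be read off directly from the Fourier-multiplier definition of~$T^t$.
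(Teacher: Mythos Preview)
The paper does not supply a proof of this proposition; it is quoted from \cite{GorIvaTik19,Ros03b} as background. Your treatment of parts~(1) and~(3) is essentially the standard argument and is fine.

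Part~(2), however, contains a real gap. Substituting the intertwining representation $e_k(x,y)=\int e^{i(\xi,y)}\,d\mu_x^k(\xi)$ into the first integral of part~(1) leaves you with
\[
T^tf(x)=\int_{\mathbb{R}^d}\biggl(\int_{\mathbb{R}^d}j_{\lambda_k}(t|y|)\,e^{i(\xi,y)}\,\mathcal{F}_k(f)(y)\,d\mu_k(y)\biggr)\,d\mu_x^k(\xi),
\]
and the inner integral is \emph{not} a value of $f$ (nor an average of values of $f$), because $\mathcal{F}_k(f)$ is the Dunkl transform, not the classical Fourier transform with respect to $d\mu_k$. No expression of the form $e^{i(\xi,ty')}$ arises from the formulas at hand, so the step ``averaging $e^{i(\xi,ty')}$ over $y'$'' has no anchor in what precedes it. Your support argument is also incorrect: if $\xi\in\operatorname{co}(Gx)$ and $|y'|=1$, then $\xi+ty'\in B_t(\xi)$, but $\xi$ need not coincide with any orbit point $gx$, so one does \emph{not} conclude $\xi+ty'\in\bigcup_{g\in G}B_t(gx)$.

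The actual source of positivity is R\"{o}sler's \emph{radial product formula}, the main theorem of \cite{Ros03b}: for each $x\in\mathbb{R}^d$ and $t\ge 0$ there exists a compactly supported probability measure $\sigma_{x,t}^k$ (with the stated support) such that
\[
j_{\lambda_k}(t|y|)\,e_k(x,y)=\int_{\mathbb{R}^d}e_k(z,y)\,d\sigma_{x,t}^k(z)\quad\text{for all }y\in\mathbb{R}^d.
\]
This is considerably deeper than mere positivity of the intertwining operator; its proof goes through positivity of $\tau^{y}$ on \emph{radial} functions and heat-semigroup estimates. Once this product formula is available, inserting it into the first identity of part~(1) and applying Fubini yields $T^tf(x)=\int f\,d\sigma_{x,t}^k$ immediately, with the support description built in.
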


Note that for $k\equiv0$, $T^tf(x)$ coincides the usual spherical mean
$\int_{\mathbb{S}^{d-1}}f({x+ty'})\,d\sigma_{0}(y')$.

Let $g(y)=g_0(|y|)$ be a radial function. Thangavelu and
Xu~\cite{ThaXu05} defined the convolution
\begin{equation}\label{convolution1}
(f\ast_{k}g)(x)=\int_{\mathbb{R}^d}f(y)\tau^{x}g(-y)\,d\mu_{k}(y).
\end{equation}

\begin{proposition}[\cite{GorIvaTik19,ThaXu05}]
\textup{(1)} If $f\in\mathcal{A}_k$, $g\in L^{1}_{\mathrm{rad}}(\mathbb{R}^d,d\mu_{k})$, then
\[
(f\ast_{k}g)(x)=\int_{\mathbb{R}^d}\tau^{-y}f(x)g(y)\,d\mu_k(y)\in \mathcal{A}_k
\]
and
\begin{equation*}
\mathcal{F}_k(f\ast_{k}g)(y)=\mathcal{F}_k(f)(y)\mathcal{F}_k(g)(y),\quad y\in\mathbb{R}^d.
\end{equation*}

\smallbreak
\textup{(2)} {\it Let $1\leq p\leq\infty$. If $f\in L^{p}(\mathbb{R}^{d},d\mu_{k})$,
$g\in L^{1}_{\mathrm{rad}}(\mathbb{R}^d,d\mu_{k})$, then
$(f\ast_{k}g)\in L^{p}(\mathbb{R}^{d},d\mu_{k})$, and}
\[
\|(f\ast_{k}g)\|_{p}\leq \|f\|_{p}\,\|g\|_{1}.
\]
\end{proposition}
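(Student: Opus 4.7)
The key observation is that $g$ being radial allows us to reduce the convolution to the \emph{good} translation $T^t$, which is an $L^p$-contraction for all $p$, rather than the operator $\tau^y$, which may fail to be bounded on $L^p$ when $p \ne 2$. Writing $g(y)=g_0(|y|)$, using polar coordinates $d\mu_k(y) = b_k\, r^{d_k-1}\,dr\,d\sigma_k(y')$, together with the spherical representation $T^r f(x) = \int_{\mathbb{S}^{d-1}} \tau^{ry'} f(x)\,d\sigma_k(y')$ and the invariance of $d\sigma_k$ under $y' \mapsto -y'$ (since $v_k$ is even), I expect the radial-average representation
\[
(f \ast_k g)(x) = b_k \int_0^\infty g_0(r)\, T^r f(x)\, r^{d_k-1}\,dr.
\]
This identity is the engine of both parts of the proposition.

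For part (1), I would first derive the symmetric form $(f \ast_k g)(x) = \int_{\mathbb{R}^d} \tau^{-y} f(x)\, g(y)\,d\mu_k(y)$, which amounts to the pointwise identity $\tau^x g(-y) = \tau^{-y} g(x)$. This is transparent on the Fourier side: if the inversion formula applies, both quantities equal
\[
\int_{\mathbb{R}^d} e_k(x,\xi)\, e_k(-y,\xi)\, \mathcal{F}_k(g)(\xi)\, d\mu_k(\xi),
\]
and radiality of $g$ (hence of $\mathcal{F}_k(g)$) is what allows one to pass from $L^1$-radial data to the regularity needed for inversion via a standard mollification. Plugging the symmetric form into the definition of $\mathcal{F}_k(f \ast_k g)$, applying Fubini, and using $\mathcal{F}_k(\tau^{-y} f)(\xi) = \overline{e_k(y,\xi)}\, \mathcal{F}_k(f)(\xi)$ yield immediately
\[
\mathcal{F}_k(f \ast_k g)(\xi) = \mathcal{F}_k(f)(\xi)\, \mathcal{F}_k(g)(\xi).
\]
Since $f \in \mathcal{A}_k$ gives $f, \mathcal{F}_k(f) \in L^1 \cap C_0$ and $g \in L^1$ radial with $\mathcal{F}_k(g) \in L^\infty$, the product lies in $L^1$, placing $f \ast_k g$ in $\mathcal{A}_k$.

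For part (2), the plan is to apply Minkowski's integral inequality to the radial-average representation and invoke the contraction $\|T^r f\|_p \le \|f\|_p$ from the preceding proposition, obtaining
\[
\|f \ast_k g\|_p \le b_k \int_0^\infty |g_0(r)|\, \|T^r f\|_p\, r^{d_k-1}\,dr \le \|f\|_p \|g\|_1.
\]
The endpoint $p=\infty$ follows by the same argument with the supremum bound on $T^r f$, and $p=1$ by integrating in $x$ first.

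The main obstacle I anticipate is the rigorous passage from the definition \eqref{convolution1} to the radial-average representation when $p \ne 2$, because $\tau^y$ acting inside the integral is not a priori a bounded $L^p$ object and the Fubini exchange must be justified with care. I would circumvent this by first establishing the representation for $f \in \mathcal{S}(\mathbb{R}^d)$, where the positivity of $\tau^{ty'} f$ and its continuous dependence on $(t,x)$ make all exchanges legitimate, and then extending to arbitrary $f \in L^p(\mathbb{R}^d, d\mu_k)$ by density, relying on the fact that the right-hand side defines a Bochner integral in $L^p$ via the contraction of $T^r$.
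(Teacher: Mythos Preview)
The paper does not prove this proposition; it is quoted verbatim as a known result from \cite{GorIvaTik19,ThaXu05} and used as a tool. Your outline is essentially the argument given in those references: reduce the radial convolution to the spherical mean $T^t$ via polar coordinates, then apply Minkowski's integral inequality together with the $L^p$-contraction $\|T^tf\|_p\le\|f\|_p$ for part~(2), and handle part~(1) by the Fourier-side identity $\mathcal{F}_k(\tau^{-y}f)=\overline{e_k(y,\cdot)}\,\mathcal{F}_k(f)$ and Fubini.

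One small slip: in your last paragraph you write ``the positivity of $\tau^{ty'}f$''. The single-point translation $\tau^y$ is \emph{not} known to be positive (this is precisely the obstruction mentioned in Section~\ref{sec2}); only the averaged operator $T^t$ is positive. For $f\in\mathcal{S}(\mathbb{R}^d)$ the justification of Fubini and the passage to the radial-average representation should instead rest on the absolute integrability coming from rapid decay (or, alternatively, on the fact that for radial $g\in L^1$ the translate $\tau^xg$ admits R\"osler's explicit positive-kernel representation), not on positivity of $\tau^{ty'}$. With that correction, your density-plus-Bochner extension to general $f\in L^p$ is exactly the right way to close the argument.
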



We also mention the following Hausdorff--Young and Hardy--Littlewood type
inequalities.
\begin{proposition}[\cite{AASS09}]\label{prop-class}
One has
\begin{equation}\label{hy-ineq}
\|\mathcal{F}_{k}(f)\|_{p'}\le \|f\|_{p},\quad 1\le p\le 2,\quad
\frac{1}{p}+\frac{1}{p'}=1,
\end{equation}
and
\[
\bigl\||x|^{d_{k}(1-2/p)}\mathcal{F}_{k}(f)(x)\bigr\|_{p}\lesssim \|f\|_{p},\quad 1<p\le
2,
\]
where $d_{k}$ is the generalized dimension defined by \eqref{Weighteddimension}.
\end{proposition}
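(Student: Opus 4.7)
The plan is to derive both estimates by interpolation from two classical endpoint pairs already at hand. For the Hausdorff--Young inequality I would apply the Riesz--Thorin theorem to
\[
\|\mathcal{F}_k(f)\|_\infty \le \|f\|_1 \qquad\text{and}\qquad \|\mathcal{F}_k(f)\|_2 = \|f\|_2;
\]
the $L^1 \to L^\infty$ bound is immediate from the pointwise estimate $|e_k(x,y)| \le 1$ recorded in the integral representation of $e_k$, while the $L^2$-isometry is the Plancherel identity of Proposition~\ref{prop2.1}(4). This at once delivers $\|\mathcal{F}_k(f)\|_{p'} \le \|f\|_p$ throughout $1 \le p \le 2$.

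For the Hardy--Littlewood-type inequality I would study the sublinear operator $Tf(x) = |x|^{d_k(1-2/p)}\mathcal{F}_k(f)(x)$. At $p=2$ the weight exponent vanishes and $\|Tf\|_2 = \|f\|_2$ is just Plancherel. At $p=1$ the weight is $|x|^{-d_k}$, and I would only establish a weak-type $(1,1)$ bound. The pointwise inequality $|\mathcal{F}_k(f)| \le \|f\|_1$, combined with the volume asymptotics $\mu_k(B_r) \asymp r^{d_k}$ that follow from the homogeneity of $v_k$ of degree $d_k - d$, yields
\[
\mu_k\bigl\{x : |x|^{-d_k}|\mathcal{F}_k(f)(x)| > \lambda\bigr\} \le \mu_k\bigl\{x : |x| < (\|f\|_1/\lambda)^{1/d_k}\bigr\} \lesssim \|f\|_1/\lambda.
\]
Marcinkiewicz (or Stein--Weiss) interpolation between this weak endpoint and the strong $(2,2)$ endpoint from Plancherel then produces the strong-type weighted bound for every $1 < p < 2$.

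The main subtlety is that the weight $|x|^{d_k(1-2/p)}$ in the target space of $T$ itself depends on the interpolation parameter $p$. The cleanest way to handle this is to view the estimates as bounds on the analytic family $T_z f(x) = |x|^{z}\mathcal{F}_k(f)(x)$ on a suitable vertical strip, and to invoke Stein's interpolation theorem for analytic families of operators together with the usual weak-to-strong Marcinkiewicz passage at the $L^1$-endpoint. The restriction to $1 < p \le 2$ in the second estimate reflects exactly the obstruction that the $L^1$-endpoint is available only in weak form.
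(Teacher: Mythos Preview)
The paper does not supply its own proof of this proposition; it is quoted verbatim as a known result from \cite{AASS09}, so there is no in-paper argument to compare against. Your sketch is the standard route and is correct: Riesz--Thorin between the trivial $L^1\to L^\infty$ bound (from $|e_k(x,y)|\le 1$) and Plancherel yields the Hausdorff--Young inequality, while the weighted Hardy--Littlewood estimate comes from the weak-$(1,1)$ bound for $|x|^{-d_k}\mathcal{F}_k(f)$ (your level-set computation using $\mu_k(B_r)\asymp r^{d_k}$ is exactly right) together with the strong $(2,2)$ Plancherel endpoint.

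The only point requiring care is the one you already flag: the weight depends on $p$, so a single application of Marcinkiewicz to a fixed operator does not suffice. Your fix via the analytic family $T_z f=|x|^{z}\mathcal{F}_k(f)$ is the right idea, but be aware that Stein's complex interpolation theorem in its classical form asks for strong-type bounds at both ends. To combine a weak-type $L^1$ endpoint with the strong $L^2$ endpoint one either invokes a known extension (e.g.\ Sagher's weak-type version of analytic interpolation, or the Stein--Weiss interpolation theorem with change of measure), or passes first through Lorentz spaces and real interpolation. Any of these completes your outline rigorously, and this is in essence how the cited reference proceeds.
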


We will use the following known Hardy's inequality:
\begin{equation}\label{har-1}
\sum_{j=0}^{\infty}2^{-j\gamma}\biggl(\sum_{l=0}^{j}A_{l}\biggr)^{\vartheta}\asymp
\sum_{j=0}^{\infty}2^{-j\gamma}A_{j}^{\vartheta},
\end{equation}
\begin{equation}\label{har-2}
\sum_{j=0}^{\infty}2^{j\gamma}\biggl(\sum_{l=j}^{\infty}A_{l}\biggr)^{\vartheta}\asymp
\sum_{j=0}^{\infty}2^{j\gamma}A_{j}^{\vartheta},
\end{equation}
where  $A_{j}\ge 0$, $\gamma>0$, and $0<\vartheta\le \infty$ (with the standard modification for  $\vartheta=\infty$); see e.g. \cite{jaen}.

\smallskip
\section{Smoothness characteristics and the~$K$-functional}\label{sec3}

\subsection{Bernstein's class of entire functions}\label{subsec-bern}
Let $\mathbb{C}^d$ be the complex Euclidean space of $d$ dimensions,
$z=(z_1,\dots,z_d)\in \mathbb{C}^d$, $|z|=\sqrt{\sum_{i=1}^{d}|z_{i}|^{2}}$,
and $\mathrm{Im}\,z=(\mathrm{Im}\,z_1,\dots,\mathrm{Im}\,z_d)$.

 For $\sigma>0$ we define the Bernstein class
$\mathcal{B}_{p,k}^{\sigma}$ of entire function
of exponential spherical type at most $\sigma$. 
We say that a function $f\in \mathcal{B}_{p,k}^{\sigma}$ if $f\in
L^{p}(\mathbb{R}^{d},d\mu_{k})$ is such that its analytic continuation to
$\mathbb{C}^d$ satisfies
\begin{equation*}
|f(z)|\leq C_{\varepsilon}e^{(\sigma+\varepsilon)|z|},\quad
\forall\,\varepsilon>0,\ \forall\,z\in \mathbb{C}.
\end{equation*}
The smallest $\sigma=\sigma_{f}$ in this inequality is called a spherical type of $f$.

In \cite{GorIvaTik19}, we proved that functions $f\in
\mathcal{B}_{p,k}^{\sigma}$ satisfy
\[
|f(z)|\leq Ce^{\sigma|\mathrm{Im}\,z|},\quad \forall\,z\in \mathbb{C}^d.
\]
Moreover, the following Paley--Wiener type characterization holds true.

\begin{proposition}[\cite{GorIvaTik19}]\label{prop4}
A function $f\in \mathcal{B}_{p,k}^{\sigma}$, $1\leq p<\infty$, if and only if
\[
f\in L^{p}(\mathbb{R}^{d},d\mu_{k})\cap C_b(\mathbb{R}^d)\quad \text{and}\quad
\operatorname{supp}\mathcal{F}_k(f)\subset B_\sigma(0).
\]
\end{proposition}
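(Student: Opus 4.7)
The plan is to model the argument on the classical Paley--Wiener theorem for the Euclidean Fourier transform, replacing the kernel $e^{i(x,y)}$ by the Dunkl kernel $e_{k}(x,y)$ and exploiting R\"osler's integral representation $e_{k}(x,y)=\int_{\mathbb{R}^{d}}e^{i(\eta,y)}\,d\mu_{x}^{k}(\eta)$ together with the inversion and Plancherel facts recorded in Proposition~\ref{prop2.1}. The key analytic input is the estimate
\[
|e_{k}(z,y)|\le e^{|y|\,|\mathrm{Im}\,z|},\qquad y\in\mathbb{R}^{d},\ z\in\mathbb{C}^{d},
\]
which follows by analytically continuing the representing probability measure $\mu_{x}^{k}$ in the first argument and using that its support lies in the (complex) convex hull of the orbit $G(R)z$.

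For the sufficiency direction, suppose $f\in L^{p}(\mathbb{R}^{d},d\mu_{k})\cap C_{b}(\mathbb{R}^{d})$ with $\operatorname{supp}\mathcal{F}_{k}(f)\subset B_{\sigma}(0)$. Since $f\in L^{p}\cap L^{\infty}$, interpolation gives $f\in L^{2}$, and Plancherel yields $\mathcal{F}_{k}(f)\in L^{2}$; being compactly supported, $\mathcal{F}_{k}(f)\in L^{1}$, so $f\in\mathcal{A}_{k}$ and the pointwise inversion formula applies. I would then \emph{define} the candidate extension
\[
F(z)=\int_{B_{\sigma}(0)}\mathcal{F}_{k}(f)(y)\,e_{k}(z,y)\,d\mu_{k}(y),\qquad z\in\mathbb{C}^{d},
\]
verify entirety by differentiating under the integral (permissible by the exponential bound on $e_{k}$), and estimate
\[
|F(z)|\le \|\mathcal{F}_{k}(f)\|_{1}\,e^{\sigma|\mathrm{Im}\,z|},
\]
so that $F\in\mathcal{B}_{p,k}^{\sigma}$ and $F|_{\mathbb{R}^{d}}=f$.

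For the necessity direction, suppose $f\in\mathcal{B}_{p,k}^{\sigma}$. Boundedness on $\mathbb{R}^{d}$ (hence $f\in C_{b}$) follows from the exponential-type bound $|f(z)|\le Ce^{\sigma|\mathrm{Im}\,z|}$ stated just before Proposition~\ref{prop4}. To show $\operatorname{supp}\mathcal{F}_{k}(f)\subset B_{\sigma}(0)$, I would test against a Schwartz function $\varphi$ with $\operatorname{supp}\varphi\subset\{|y|>\sigma\}$ and rewrite
\[
\langle\mathcal{F}_{k}(f),\varphi\rangle=\int_{\mathbb{R}^{d}}f(x)\,\mathcal{F}_{k}(\varphi)(x)\,d\mu_{k}(x).
\]
Using R\"osler's representation to disintegrate $e_{k}$, the inner integral reduces, slice by slice through the measures $\mu_{x}^{k}$, to a classical Fourier pairing on $\mathbb{R}^{d}$. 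A Cauchy/contour shift applied to the analytic continuation of $f$ in a direction compatible with $\varphi$'s support then yields cancellation by the Paley--Wiener argument for the ordinary Fourier transform, reducing the problem to the case $k\equiv0$ already known.

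The main obstacle is the necessity direction: the Dunkl transform is not a simple multiplicative twist of the Fourier transform, so the usual contour-shift argument is not available directly. The disintegration via $\mu_{x}^{k}$ is clean algebraically but requires some care regarding integrability of the resulting slicewise integrals and the holomorphy of $\mu_{z}^{k}$ in $z$. A second delicate point (in the sufficiency direction) is if one wants to dispense with the hypothesis $f\in C_{b}$ for $p>2$, where $\mathcal{F}_{k}(f)$ is only a tempered distribution; the compact-support assumption still lets us represent it as a finite sum of derivatives of $L^{2}$ functions with support in $B_{\sigma}(0)$, and the same bound on $e_{k}(z,y)$ yields the exponential type after integration by parts, but this step is the technical heart of the argument.
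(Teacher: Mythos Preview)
The paper does not contain a proof of Proposition~\ref{prop4}: it is quoted as a known result from \cite{GorIvaTik19}, and no argument is supplied in the present paper. There is therefore no ``paper's own proof'' to compare your proposal against.

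That said, two remarks on your sketch. In the sufficiency direction, the step ``$f\in L^{p}\cap L^{\infty}$, so by interpolation $f\in L^{2}$'' is valid only for $1\le p\le 2$; when $p>2$ you have $L^{p}\cap L^{\infty}\subset L^{q}$ only for $q\ge p$, not for $q=2$. Thus for $p>2$ the transform $\mathcal{F}_{k}(f)$ is still, a priori, just a tempered distribution, and your inversion-integral definition of $F(z)$ is not available as written. You later describe the correct fix (structure theorem for compactly supported distributions plus integration by parts against $e_{k}(z,\cdot)$), but you frame it as something needed only ``if one wants to dispense with the hypothesis $f\in C_{b}$''; in fact it is needed even \emph{with} the hypothesis $f\in C_{b}$, for every $p>2$.

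In the necessity direction, your plan to disintegrate through the measures $\mu_{x}^{k}$ and then shift contours requires holomorphy of $z\mapsto\mu_{z}^{k}$. R\"osler's positivity result supplies $\mu_{x}^{k}$ only for real $x$, and no analytic continuation of these measures is established in the references available here; you flag this yourself, but it is a genuine gap rather than a technicality. A more robust route (and closer to what is done in \cite{GorIvaTik19}) is to first reduce to the bounded case via a Nikolskii-type inequality, multiply by a suitable Schwartz cutoff to land in $\mathcal{A}_{k}$, and then argue directly with the entire extension of $e_{k}(\cdot,y)$ rather than with the representing measures.
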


The Dunkl transform $\mathcal{F}_k(f)$ in Proposition \ref{prop4} is understood
as a function for $1\leq p\leq 2$ and as a tempered distribution for $p>2$.


\subsection{Lizorkin and Sobolev spaces}\label{subsec-lapl}
Now we define the fractional power of the Dunkl Laplacian. Let
\[
\Phi_k=\biggl\{f\in \mathcal{S}(\mathbb{R}^d)\colon
\int_{\mathbb{R}^{d}}x_{1}^{\alpha_{1}}\ldots
x_{d}^{\alpha_{d}}f(x)\,d\mu_k(x)=0,\ \alpha\in \mathbb{Z}^d_+\biggr\}
\]
be the weighted Lizorkin space (see \cite{Liz63,GorIvaTik17,SamKilMar93}) and set
\[
\Psi_k=\{\mathcal{F}_{k}(f)\colon f\in\Phi_k\}.
\]

\begin{proposition}[\cite{GorIvaTik17}]
\textup{(1)} The spaces $\Phi_k$ and $\Psi_k$ are closed in the topology of
$\mathcal{S}(\mathbb{R}^d)$.

\textup{(2)} The space $\Phi_k$ is dense in $L^{p}(\mathbb{R}^{d},d\mu_{k})$
for $1\leq p<\infty$.

\textup{(3)} One has
\[
\Psi_k\equiv \Psi_0=\{\mathcal{F}(f)\colon
f\in\Phi_0\}=\biggl\{f\in\mathcal{S}(\mathbb{R}^{d})\colon
\frac{\partial^{\alpha_{1}+\ldots+\alpha_{d}}}{\partial
x_{1}^{\alpha_{1}}\ldots \partial x_{d}^{\alpha_{d}}}\,f(0)=0,\ \alpha\in
\mathbb{Z}^d_+\biggr\}.
\]
\end{proposition}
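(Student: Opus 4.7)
The plan is to establish the three parts in the order (1), (3), (2), since the Fourier-analytic description of $\Psi_k$ in Part~(3) drives the density argument in Part~(2). Part~(1) is a soft closure argument: each moment functional $L_\alpha(f) := c_k\int_{\mathbb{R}^d} x^\alpha f(x)\, v_k(x)\,dx$ is continuous on $\mathcal{S}(\mathbb{R}^d)$ because $x^\alpha v_k(x)$ has polynomial growth, and the resulting integral is controlled by Schwartz seminorms of $f$. Therefore $\Phi_k = \bigcap_{\alpha \in \mathbb{Z}_+^d} \ker L_\alpha$ is closed as an intersection of closed hyperplanes, and $\Psi_k = \mathcal{F}_k(\Phi_k)$ is closed as well because $\mathcal{F}_k$ is a topological automorphism of $\mathcal{S}(\mathbb{R}^d)$ by Proposition~\ref{prop2.1}(3).

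For Part~(3), the key identity
\[
\mathcal{F}_k(x^\alpha f)(y) = i^{|\alpha|} D_k^\alpha \mathcal{F}_k(f)(y)
\]
is derived by differentiating under the integral using $D_{j,k}^{(y)} \overline{e_k(x,y)} = -i x_j \overline{e_k(x,y)}$ and iterating via the commutativity of the Dunkl operators. Evaluating at $y=0$ translates the defining moment conditions of $\Phi_k$ into the statement $D_k^\alpha F(0) = 0$ for all $\alpha$, where $F = \mathcal{F}_k(f)$, so the identification $\Psi_k = \Psi_0$ reduces to the equivalence between $D_k$-flatness and classical flatness at the origin for $F \in \mathcal{S}$. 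The inclusion $\Psi_0 \subset \Psi_k$ is elementary: since $\sigma_a 0 = 0$, a classically flat $F$ satisfies $F(x), F(\sigma_a x) = O(|x|^N)$ for every $N$, whence $[F(x) - F(\sigma_a x)]/(a,x) = O(|x|^{N-1})$, showing that $D_{j,k} F$ is again flat at $0$; iterating gives $D_k^\alpha F(0) = 0$ for all~$\alpha$.

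The reverse inclusion $\Psi_k \subset \Psi_0$ is the principal obstacle. I would argue by induction on $N$: assuming $\partial^\beta F(0) = 0$ for all $|\beta| < N$, write $F = p_N + R$ with $p_N$ homogeneous of degree $N$ and $R(x) = O(|x|^{N+1})$. Since each $D_{j,k}$ sends homogeneous polynomials of degree $n$ into those of degree $n-1$ (because $p(x) - p(\sigma_a x)$ vanishes on $(a,x) = 0$ and is therefore divisible by $(a,x)$), the same analysis yields $D_k^\alpha R(x) = O(|x|^{N+1-|\alpha|})$, hence $D_k^\alpha R(0) = 0$ for $|\alpha| = N$, leaving $D_k^\alpha F(0) = D_k^\alpha p_N$. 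The induction step is then closed by the positive-definite Dunkl pairing $\langle p, q\rangle_k := (p(D_k) q)(0)$ on homogeneous polynomials of degree $N$ (classical in the Dunkl--Xu theory): the hypothesis yields $\langle q, p_N\rangle_k = 0$ for every polynomial $q$ of degree $N$, and choosing $q = p_N$ forces $p_N \equiv 0$.

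For Part~(2), with $\Psi_k = \Psi_0$ in hand, density of $\Phi_k$ in $L^p(\mathbb{R}^d, d\mu_k)$ for $1 < p < \infty$ follows by a duality argument. If $h \in L^{p'}$ annihilates $\Phi_k$, then for every $F \in C_c^\infty(\mathbb{R}^d \setminus \{0\}) \subset \Psi_0 = \Psi_k$ one has $\mathcal{F}_k^{-1}(F) \in \Phi_k$, and the annihilation condition transfers via the distributional Parseval identity to $\langle \mathcal{F}_k^{-1}(h), F\rangle = 0$. Hence $\mathcal{F}_k^{-1}(h) \in \mathcal{S}'$ is supported at $\{0\}$, so it is a finite linear combination of derivatives of $\delta_0$; applying $\mathcal{F}_k$ returns a polynomial, and a polynomial lies in $L^{p'}(d\mu_k)$ with $p' < \infty$ only when it vanishes identically, so $h = 0$.
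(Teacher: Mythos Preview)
The paper does not supply its own proof of this proposition; it is simply quoted from \cite{GorIvaTik17}. So there is no in-paper argument to compare against, and I will assess your attempt on its own merits.

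Your treatments of (1) and (3) are correct. The closure argument in (1) is the standard one, and for (3) the appeal to the positive-definite Dunkl bilinear form $[p,q]_k=(p(D_k)q)(0)$ on homogeneous polynomials of a fixed degree is precisely the tool that closes the nontrivial inclusion $\Psi_k\subset\Psi_0$; the induction step is set up correctly. One small point worth making explicit in (2) is that ``applying $\mathcal{F}_k$ returns a polynomial'' rests on the fact that $\partial_y^{\alpha}\overline{e_k(x,y)}\big|_{y=0}$ is a polynomial in $x$ (this follows from the homogeneous expansion of the Dunkl kernel), so that $\mathcal{F}_k$ sends any distribution supported at the origin to a polynomial with respect to the $d\mu_k$-pairing.

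The genuine gap is that your duality argument for (2) is stated only for $1<p<\infty$, while the proposition claims $1\le p<\infty$. This is not merely an omission of a routine case: the duality route cannot work for $p=1$, and in fact density of $\Phi_k$ in $L^1(\mathbb{R}^d,d\mu_k)$ fails as stated. Indeed, the constant function $1\in L^{\infty}=(L^{1})^{*}$ is a nonzero bounded functional that annihilates $\Phi_k$ (take $\alpha=0$ in the defining moment conditions), so by Hahn--Banach the $L^1(d\mu_k)$-closure of $\Phi_k$ lies in the proper hyperplane $\{f:\int f\,d\mu_k=0\}$. Thus either the intended range in the source is $1<p<\infty$, or a different formulation is meant; in any event, your restriction to $1<p<\infty$ is exactly the range on which the argument you give is valid.
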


Now we will use some auxiliary results from our paper \cite{GorIva19}. Let
$\Phi_k'$ and $\Psi_k'$ be the spaces of distributions on $\Phi_k$ and $\Psi_k$
respectively. We have $\mathcal{S}'(\mathbb{R}^{d})\subset\Phi_k'$,
$\mathcal{S}'(\mathbb{R}^{d})\subset\Psi_k'$, and
$\Phi_k'=\mathcal{S}'(\mathbb{R}^{d})/\Pi$,
$\Psi_k'=\mathcal{S}'(\mathbb{R}^{d})/\mathcal{F}_k(\Pi)$, where $\Pi$ stands
for the set of all polynomials of $d$ variables. We can multiply distributions
from $\Psi_k'$ on functions from
\[
C^{\infty}_{\Pi}(\mathbb{R}^d\setminus\{0\})=\bigl\{|x|^sf(x)\colon f\in
C^{\infty}_{\Pi}(\mathbb{R}^d),\ s\in\mathbb{R}\bigr\},
\]
where $C^{\infty}_{\Pi}(\mathbb{R}^d)$ is the space of infinitely differentiable
functions whose derivatives have polynomial growth at infinity.

Next, using Dunkl multipliers we can define the following distributions.
Let $r>0$. We define the fractional power of the Dunkl Laplacian for
$\varphi\in\Phi_k$ as follows
\[
(-\Delta_k)^{r/2}\varphi=\mathcal{F}_k^{-1}(|\Cdot|^{r}\mathcal{F}_k(\varphi))
=\mathcal{F}_k(|\Cdot|^{r}\mathcal{F}_k^{-1}(\varphi))\in\Phi_k.
\]
(see also \cite{MPS20}).
By definition, for $f\in\Phi_k'$ the distribution $(-\Delta_k)^{r/2}f\in\Phi_k'$ is 
\[
((-\Delta_k)^{r/2}f,\varphi)=(f,(-\Delta_k)^{r/2}\varphi)=
(f,\mathcal{F}_k^{-1}(|\Cdot|^{r}\mathcal{F}_k(\varphi))),\quad \varphi\in \Phi_k.
\]
By $W^{r}_{p,k}$, $1\leq p\leq\infty$, we denote the Sobolev space, that is,
\[
W^{r}_{p,k}=\{f\in L^{p}(\mathbb{R}^{d},d\mu_{k})\colon (-\Delta_k)^{r/2}f\in
L^{p}(\mathbb{R}^{d},d\mu_{k})\}
\]
equipped with the norm
\[
\|f\|_{W^{r}_{p,k}}=\|f\|_{p}+\bigl\|(-\Delta_k)^{r/2}f\bigr\|_{p}.
\]

\subsection{Basic definitions in the distributional sense}
Let us now define the direct and inverse Dunkl transforms
$\mathcal{F}_k$, $\mathcal{F}_k^{-1}$, generalized translation operators
$\tau^y$, $T^t$, and convolution $(f\ast_{k}g)$ for
distributions.

For $f\in\Phi_k'$ the direct Dunkl transform
$\mathcal{F}_k(f)\in\Psi_k'$ is defined by
\[
(\mathcal{F}_k(f),\psi)=(f,\mathcal{F}_k(\psi)), \quad\psi\in\Psi_k.
\]
Similarly, for $g\in\Psi_k'$ the inverse Dunkl transform
$\mathcal{F}_k^{-1}(g)\in\Phi_k'$ is defined by
\[
(\mathcal{F}_k^{-1}(g),\varphi)=(g,\mathcal{F}_k^{-1}(\varphi)), \quad\varphi\in\Phi_k.
\]
We have
\[
\mathcal{F}_k^{-1}(\mathcal{F}_k(f))=f,\quad\mathcal{F}_k(\mathcal{F}_k^{-1}(g))=g,
\quad f\in\Phi_k',\ g\in\Psi_k'.
\]
Note that $f=g$ in $\Phi_k'$ if and only if $\mathcal{F}_k(f)=\mathcal{F}_k(g)$ in $\Psi_k'$.

For $f\in\Phi_k'$ the generalized translation operators
$\tau^yf,T^tf\in\Phi_k'$ are given respectively~by
\begin{equation*}
\begin{gathered}
(\tau^yf,\varphi)=(f,\tau^{-y}\varphi)=(f,\mathcal{F}_k^{-1}(e_k(-y,{\cdot\,})\mathcal{F}_k(\varphi))),\quad
y\in\mathbb{R}^{d},\\
(T^tf,\varphi)=(f,T^t\varphi)=(f,\mathcal{F}_k^{-1}(j_{\lambda_k}(t|\Cdot|)
\mathcal{F}_k(\varphi))),\quad
t\in\mathbb{R}_+,
\end{gathered}
\end{equation*}
where $\varphi\in \Phi_k$. Moreover, the following
 equalities are valid:
\begin{equation*}
\begin{gathered}
\mathcal{F}_k((-\Delta_k)^{r/2}f)=|\Cdot|^{r}\mathcal{F}_k(f),\quad
\mathcal{F}_k(\tau^yf)=e_k(y,{\cdot\,})\mathcal{F}_k(f),\\
\mathcal{F}_k((-\Delta_k)^{r/2}\tau^yf)=|\Cdot|^{r}e_k(y,{\cdot\,})\mathcal{F}_k(f), \quad
\mathcal{F}_k(T^tf)=j_{\lambda_k}(t|\Cdot|)\mathcal{F}_k(f),\\
\mathcal{F}_k((-\Delta_k)^{r/2}T^tf)=|\Cdot|^{r}j_{\lambda_k}(t|\Cdot|)
\mathcal{F}_k(f),\\ \mathcal{F}_k(T^t(\tau^yf))=j_{\lambda_k}(t|\Cdot|)e_k(y,{\cdot\,})\mathcal{F}_k(f).
\end{gathered}
\end{equation*}
In particular, this implies the commutativity of considered operators.

Let $\varphi\in\Phi_k$ and $\varphi^{-}(y)=\varphi(-y)$. We say that
$f\in\Phi_k'$ is even if $(f,\varphi^{-})=(f,\varphi)$. Similarly we define
even $g\in\Psi_k'$. Note that $f\in\Phi_k'$ is even if and only if
$\mathcal{F}(f)\in\Psi_k'$ is even.

Let $N_k$ be a set of all even $f\in\Phi_k'$ such that $\mathcal{F}_k(f)\in
C^{\infty}_{\Pi}(\mathbb{R}^d\setminus\{0\})$. For $f\in N_k$ and
$\varphi\in\Phi_k$ we set
\[
(f\ast_{k}\varphi)(x)=(\tau^{x}f,\varphi^{-})=(f,\tau^{-x}\varphi^{-}).
\]
If $g\in N_k$ and $\varphi\in\Phi_k$, then $(g\ast_{k}\varphi)\in\Phi_k$ and
\[ (g\ast_{k}\varphi)(x)=\mathcal{F}_k^{-1}(\mathcal{F}_k(g)\mathcal{F}_k(\varphi))(x),\quad
\mathcal{F}_k(g\ast_{k}\varphi)(y)=\mathcal{F}_k(g)(y)\mathcal{F}_k(\varphi)(y).
\]
Therefore, we can define the convolution $(f\ast_{k}g)\in\Phi_k'$ for $f\in\Phi_k'$ and $g\in N_k$ as follows
\begin{equation}\label{convolution2}
((f\ast_{k}g),\varphi)=(f,(g\ast_{k}\varphi)),\quad \varphi\in\Phi_k.
\end{equation}
Moreover, we remark that \begin{equation}\label{commutation}
\begin{gathered}
\mathcal{F}_k(f\ast_{k}g)=\mathcal{F}_k(g)\mathcal{F}_k(f),\quad
(-\Delta_k)^{r/2}(f\ast_{k}g)=((-\Delta_k)^{r/2}f\ast_{k}g),\\
(f\ast_{k}(g_1\ast_{k}g_2))=((f\ast_{k}g_1)\ast_{k}g_2)=(f\ast_{k}(g_2\ast_{k}g_1))=((f\ast_{k}g_2)\ast_{k}g_1).
\end{gathered}
\end{equation}

The next result establishes the interrelation between the convolutions given by \eqref{convolution1} and \eqref{convolution2}.
\begin{proposition}[\cite{GorIva19}]\label{prop5}
If $f\in L^{p}(\mathbb{R}^{d},d\mu_{k})$, $g\in
L^{1}_{\mathrm{rad}}(\mathbb{R}^d,d\mu_{k})$, and $\mathcal{F}_k(g)\in N_k$, then
the convolutions given by \eqref{convolution1} and \eqref{convolution2}
  coincide.
\end{proposition}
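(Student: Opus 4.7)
The strategy is to compare the two convolutions as elements of $\Phi_k'$. Let $h=f\ast_k g\in L^{p}(\mathbb{R}^{d},d\mu_{k})$ denote the function from \eqref{convolution1} (it is in $L^p$ by the earlier proposition on $L^p$-boundedness), which I identify with the distribution $\varphi\mapsto \int h\,\varphi\,d\mu_k$ on $\Phi_k$; let $H\in\Phi_k'$ denote the distribution from \eqref{convolution2}. Since two elements of $\Phi_k'$ coincide iff their Dunkl transforms in $\Psi_k'$ coincide (noted just after the distributional definition of $\mathcal{F}_k$), the goal reduces to showing $\mathcal{F}_k(h)=\mathcal{F}_k(H)$ in $\Psi_k'$.

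For $H$, identity \eqref{commutation} immediately gives $\mathcal{F}_k(H)=\mathcal{F}_k(g)\mathcal{F}_k(f)$, the product being meaningful because the hypothesis $\mathcal{F}_k(g)\in N_k$ provides the regularity required to multiply elements of $\Psi_k'$. For $h$, I test against $\psi\in\Psi_k$ and set $\varphi=\mathcal{F}_k(\psi)\in\Phi_k$. Then
\[
(\mathcal{F}_k(h),\psi)=\int_{\mathbb{R}^d} h(x)\,\varphi(x)\,d\mu_k(x),
\]
and substituting \eqref{convolution1} and applying Fubini — justified by the absolute bound $\|f\|_p\|g\|_1\|\varphi\|_{p'}$, itself a consequence of H\"older and the $L^p$-estimate for the Thangavelu--Xu convolution — this equals
\[
\int_{\mathbb{R}^d} f(y)\Bigl(\int_{\mathbb{R}^d}\tau^x g(-y)\,\varphi(x)\,d\mu_k(x)\Bigr)d\mu_k(y).
\]

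The key step is identifying the inner integral as $(g\ast_k\varphi)(y)$ in the sense of \eqref{convolution1}. I plan to do this using: $g$ is radial so $g(-y)=g(y)$; the symmetry $\tau^x g(y)=\tau^y g(x)$ for radial $g$ (obtained from the product structure $e_k(x,z)e_k(y,z)$ against the radial $\mathcal{F}_k(g)$); the $L^2$-duality $\int(\tau^y g)(x)\varphi(x)\,d\mu_k(x)=\int g(x)(\tau^{-y}\varphi)(x)\,d\mu_k(x)$; and the invariance of $d\mu_k$ under $x\mapsto-x$ (combined with $g$ being even). These rewrite the inner integral as $\int g(x)\,\tau^y\varphi(-x)\,d\mu_k(x)=(g\ast_k\varphi)(y)$. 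Since $g\in L^1_{\mathrm{rad}}$ and $\mathcal{F}_k(g)\in N_k$, both this Thangavelu--Xu expression and the paper's distributional definition for $g\ast_k\varphi$ coincide with $\mathcal{F}_k^{-1}(\mathcal{F}_k(g)\mathcal{F}_k(\varphi))\in\Phi_k$. Substituting back and unwinding the Parseval--Dunkl pairing,
\[
(\mathcal{F}_k(h),\psi)=(f,\mathcal{F}_k^{-1}(\mathcal{F}_k(g)\psi))=(\mathcal{F}_k(f),\mathcal{F}_k(g)\psi)=(\mathcal{F}_k(g)\mathcal{F}_k(f),\psi),
\]
which matches $\mathcal{F}_k(H)$ in $\Psi_k'$ and concludes the proof. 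The main obstacle is precisely the identification of the inner integral: it requires careful bookkeeping of the reflection symmetry of $\mu_k$, the parity of radial $g$, and the radial $\tau^x/\tau^y$-identity, after which the remainder of the argument is routine unwinding of the Fourier--Dunkl duality.
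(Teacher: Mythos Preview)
The paper does not prove this proposition; it is quoted from \cite{GorIva19} without argument, so there is no in-paper proof to compare your sketch against. Your strategy---identify both convolutions as elements of $\Phi_k'$ and check that their Dunkl transforms agree in $\Psi_k'$ by pairing with $\psi\in\Psi_k$, unfolding \eqref{convolution1} via Fubini, and recognising the inner integral as $(g\ast_k\varphi)(y)=\mathcal{F}_k^{-1}(\mathcal{F}_k(g)\mathcal{F}_k(\varphi))(y)$---is the natural route and is how such identifications are carried out.

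Two small points worth tightening. First, the hypothesis as printed reads $\mathcal{F}_k(g)\in N_k$, but for \eqref{convolution2} to even be defined one needs $g\in N_k$ (so that $\mathcal{F}_k(g)\in C^\infty_\Pi(\mathbb{R}^d\setminus\{0\})$ can multiply $\mathcal{F}_k(f)\in\Psi_k'$); your phrasing suggests you are tacitly reading it that way, which is surely the intended statement. Second, your Fubini justification quotes the bound $\|f\|_p\|g\|_1\|\varphi\|_{p'}$, but that controls the iterated integral, not the double integral of absolute values; to make Fubini rigorous note that $|g|$ is again radial and nonnegative, so the Young-type estimate applies to $|f|\ast_k|g|$ and gives the needed absolute integrability. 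The sign and reflection bookkeeping you flag as ``the main obstacle'' is indeed routine once one uses $e_k(-y,-z)=e_k(y,z)$ and the self-adjointness relation $(\tau^y)^*=\tau^{-y}$, but it does require tracking the parity conventions carefully.
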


\subsection{Moduli of smoothness and $K$-functionals}\label{subsec-mod}
The $K$-functional for the couple $(L^{p}(\mathbb{R}^{d},d\mu_{k}), W^{r}_{p,k})$ is defined in the usual way: for $t>0$
\[
K_{r}(f,t)_{p}=K_{r}(f,t; L^{p}(\mathbb{R}^{d},d\mu_{k}), W^{r}_{p,k})_{p}=\inf\{\|f-g\|_{p}+t^{r}\bigl\|(-\Delta_k)^{r/2}g\bigr\|_{p}\colon
g\in W^{r}_{p,k}\}.
\]
Since
\begin{equation*}
|K_{r}(f_1,t)_{p}-K_{r}(f_2,t)_{p}|\leq \|f_1-f_2\|_{p},
\end{equation*} for any $f\in L^{p}(\mathbb{R}^{d},d\mu_{k})$, one has
$\lim_{t\to 0}K_{r}(f,t)_{p}=0$.
The monotonicity property of the $K$-functional is given by
\begin{equation}\label{K-property2}
K_{r}(f,\lambda t)_{p}\leq \max\{1,\,\lambda^{r}\}K_{r}(f,t)_{p}.
\end{equation}

By definition,
\[
\mathcal{R}_{r}(f,t)_{p}=\inf
\Bigl\{\|f-g\|_{p}+t^{r}\bigl\|(-\Delta_k)^{r/2}g\bigr\|_{p}\colon g\in
\mathcal{B}_{p,k}^{1/t}\Bigr\}
\]
is the realization of the $K$-functional $K_{r}(f,t)_{p}$. Moreover, define
\[
\mathcal{R}^*_{r}(f,t)_{p}=\|f-g^*\|_{p}+t^{r}\bigl\|(-\Delta_k)^{r/2}g^*\bigr\|_{p},
\]
where $g^*\in \mathcal{B}_{p,k}^{1/t}$ is the best or near best approximant for
$f$ in $L^{p}(\mathbb{R}^d, d\mu_k)$. The realization of the $K$-functional was
introduced in \cite{DitHriIva95, Hri}, where its
 importance in the approximation theory was shown.

\begin{proposition}[\cite{GorIva19}]\label{prop6}
{\it Suppose $t> 0$, $1\leq p\leq\infty$, and $r>0$, then, for any} $f\in
L^{p}(\mathbb{R}^{d},d\mu_{k})$,
\[
\mathcal{R}_{r}(f,t)_{p}\asymp \mathcal{R}^*_{r}(f,t)_{p} \asymp
K_{r}(f,t)_{p}\asymp\omega_r(f,t)_{p}.
\]
\end{proposition}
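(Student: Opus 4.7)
The plan is to prove the chain $K_r(f,t)_p\le\mathcal{R}_r(f,t)_p\le\mathcal{R}_r^*(f,t)_p\lesssim K_r(f,t)_p$ together with the equivalence $K_r(f,t)_p\asymp\omega_r(f,t)_p$. The first two inequalities are immediate from the definitions, using the inclusion $\mathcal{B}_{p,k}^{1/t}\subset W_{p,k}^r$ (entire functions of spherical type have the fractional Dunkl Laplacian well defined in $L^p$) and the fact that $g^*$ is a specific competitor in the infimum defining $\mathcal{R}_r$.

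For $\mathcal{R}_r^*\lesssim K_r$, I would fix any $g\in W_{p,k}^r$, take $g^*=f_{1/t}$, and bound the two summands separately. The estimate $\|f-g^*\|_p=E_{1/t}(f)_p\le\|f-g\|_p$ is immediate. For the term $t^r\|(-\Delta_k)^{r/2}g^*\|_p$, introduce a near-best Bernstein-class approximant $g_{1/t}\in\mathcal{B}_{p,k}^{1/t}$ of $g$ and split $g^*=(g^*-g_{1/t})+g_{1/t}$. A Bernstein-type inequality for the fractional Dunkl Laplacian on the Bernstein class yields $\|(-\Delta_k)^{r/2}(g^*-g_{1/t})\|_p\lesssim t^{-r}\|g^*-g_{1/t}\|_p\lesssim t^{-r}\|f-g\|_p$, where the last step uses $E_{1/t}(g)_p\lesssim\|f-g\|_p$. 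The second piece is handled by $\|(-\Delta_k)^{r/2}g_{1/t}\|_p\le\|(-\Delta_k)^{r/2}g\|_p+\|(-\Delta_k)^{r/2}(g-g_{1/t})\|_p$ and another application of Bernstein to $g-g_{1/t}$ combined with the Jackson inequality.

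The equivalence $K_r\asymp\omega_r$ I would deduce via the classical scheme. For $\omega_r\lesssim K_r$, use the triangle inequality $\omega_r(f,t)_p\le\omega_r(f-g,t)_p+\omega_r(g,t)_p$ together with the trivial bound $\omega_r(f-g,t)_p\lesssim\|f-g\|_p$ and the smoothness estimate $\omega_r(g,t)_p\lesssim t^r\|(-\Delta_k)^{r/2}g\|_p$. The latter is a radial Dunkl multiplier estimate linking the Fourier--Dunkl symbols of the fractional difference $\varDelta_t^r$ and of $(-\Delta_k)^{r/2}$ through a bounded transfer multiplier (decaying like $(t|y|)^r$ near the origin and like $(t|y|)^{-r}$ at infinity). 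Conversely, $K_r\lesssim\omega_r$ follows by choosing $g$ to be a de la Vall\'ee Poussin type smoothing of $f$: Jackson's inequality \eqref{eq1} gives $\|f-g\|_p\lesssim E_{1/t}(f)_p\lesssim\omega_r(f,t)_p$, while a dual multiplier estimate yields $\|(-\Delta_k)^{r/2}g\|_p\lesssim t^{-r}\omega_r(f,t)_p$ via the equivalence \eqref{omega-delta}.

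The hard part will be the radial Dunkl multiplier theorem invoked repeatedly above, namely $L^p(\mathbb{R}^d,d\mu_k)$-boundedness of operators $f\mapsto\mathcal{F}_k^{-1}(m(|\cdot|)\mathcal{F}_k f)$ for suitable radial symbols $m$ (including the Bernstein inequality with $m(|y|)=|y|^r$ times a cutoff supported in $\{|y|\le 1/t\}$, as well as the comparison symbols appearing in the proof of $K_r\asymp\omega_r$). I would realise such an operator as a convolution $f\ast_k K_m$ with a radial kernel whose Dunkl transform equals $m$, invoke the Young-type inequality of Proposition~\ref{prop5}, and control $\|K_m\|_1$ through Schwartz-space estimates on the symbol, using the decay and smoothness of $m$ to ensure integrability of its inverse Dunkl transform on $\mathbb{R}^d$.
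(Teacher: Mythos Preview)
The paper does not prove Proposition~\ref{prop6}; it is quoted verbatim from \cite{GorIva19} and used as a black box throughout. So there is no in-paper argument to compare against, and your sketch is being measured only against the cited source and general correctness.

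Your overall scheme is the standard realization-of-$K$-functional route and is essentially what \cite{GorIva19} does. Two points deserve tightening. First, in the step $\mathcal{R}_r^*\lesssim K_r$ you write ``another application of Bernstein to $g-g_{1/t}$''; this is not legitimate, since $g-g_{1/t}$ is not in the Bernstein class (only $g_{1/t}$ is). The clean fix is to choose $g_{1/t}$ as a de la Vall\'ee Poussin operator $\eta_{1/t}g$; then $(-\Delta_k)^{r/2}g_{1/t}=\eta_{1/t}\bigl((-\Delta_k)^{r/2}g\bigr)$ by the commutation relations \eqref{commutation}, and $\|(-\Delta_k)^{r/2}g_{1/t}\|_p\lesssim\|(-\Delta_k)^{r/2}g\|_p$ follows directly from \eqref{properties_2} without any splitting. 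Second, your plan to realise the comparison multipliers as $L^1$ convolution kernels via ``Schwartz-space estimates on the symbol'' is too optimistic as stated: the symbol $m(s)=(1-j_{\lambda_k}(s))^{r/2}s^{-r}$ is smooth and bounded but only decays like $s^{-r}$ at infinity, so it is not Schwartz, and you need a genuine argument (e.g.\ a H\"ormander--Mikhlin type criterion for radial Dunkl multipliers, or an explicit integrability estimate for the kernel as in \cite{GorIva19,GorIvaTik19}) rather than a one-line appeal to Young's inequality. Both issues are repairable, but as written they are gaps.
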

For the case of integer $r$, see \cite[Cor.~2.3, Th.~3.1]{DD04}  ($k\equiv 0$)
and \cite{GorIvaTik19} ($k({\cdot})\ge 0$). For the case of fractional moduli,
see \cite{kol, sim} ($k\equiv 0$). The discussion on various ways to define moduli
of smoothness can be found in \cite[Sec.~6]{GorIvaTik19}.

Let $\omega_m(f,\delta)_{p}$ denote the modulus of smoothness of order $m>0$ of
a function $f\in L^{p}(\mathbb{R}^{d},d\mu_{k})$, i.e.,
\[
\omega_m(f,\delta)_{p}=\sup_{0<t\leq\delta}\|\varDelta_t^mf(x)\|_{p},\quad
\delta>0,
\]
where
\begin{equation}\label{difference}
\varDelta_t^mf(x)=(I-T^t)^{m/2}f(x)=
\sum_{s=0}^{\infty}(-1)^s\binom{m/2}{s}(T^{t})^sf(x)
\end{equation}
and $I$ stands for the identical operator. The difference \eqref{difference}
coincides with the classical fractional difference for the translation operator
$T^{t}f(x)=f(x+t)$ and corresponds to the usual definition of the fractional
modulus of smoothness, see, e.g., \cite{but, SamKilMar93}.

Now we give several basic properties of the modulus of smoothness and the
difference \eqref{difference} (see \cite{GorIva19}):
\begin{equation}\label{omega-norma}
\lim_{\delta\to 0+0}\omega_m(f,\delta)_{p}=0,\quad
\omega_m(f,\delta)_{p}\lesssim\|f\|_{p},\quad \delta>0,
\end{equation}
\[
\omega_m(f_1+f_2,\delta)_{p}\leq
\omega_m(f_1,\delta)_{p}+\omega_m(f_2,\delta)_{p},
\]
\begin{equation}\label{omega-lambda}
\omega_m(f,\lambda\delta)_{p}\lesssim\max (1,\lambda^{2m})
\omega_m(f,\delta)_{p},\quad \lambda>0,
\end{equation}
\begin{equation}\label{omega-equiv}
\omega_m(f,\delta)_{p}\asymp \|\varDelta_\delta^mf\|_{p},\quad \delta>0,\quad
d_{k}>1,
\end{equation}
\begin{equation}\label{transformdifference1}
\mathcal{F}_k(\varDelta_t^mf)=(1-j_{\lambda_{k}}(t|\Cdot|))^{m/2}\mathcal{F}_k(f),\quad f\in \Phi_k'.
\end{equation}

We conclude this section by presenting the Bernstein inequality  in the Dunkl setting.

\begin{proposition}[\cite{GorIva19}]\label{prop7}
If $\sigma,r>0$, $1 \le p\le \infty$, $f\in \mathcal{B}_{p,k}^{\sigma}$, then
\begin{equation}\label{bernstein}
\bigl\|(-\Delta_k)^{r/2}f\bigr\|_{p}\lesssim \sigma^{r}\|f\|_{p},
\end{equation}
\begin{equation}\label{ber-omega}
\omega_r(f,\delta)_{p}\lesssim
\delta^{r}\bigl\|(-\Delta_k)^{r/2}f\bigr\|_{p}\lesssim
(\delta\sigma)^{r}\|f\|_{p}.
\end{equation}
\end{proposition}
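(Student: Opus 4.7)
The plan is to reduce each of the three inequalities to a convolution bound, exploiting the Paley--Wiener description of $\mathcal{B}_{p,k}^{\sigma}$ in Proposition~\ref{prop4}, the multiplier identities \eqref{commutation}--\eqref{transformdifference1}, and the $L^p$-contractivity of convolution with an $L^1(d\mu_k)$ kernel. For the Bernstein bound~\eqref{bernstein}, fix a radial Schwartz cutoff $\psi$ with $\psi\equiv 1$ on $B_1(0)$ and supported in $B_2(0)$, and set $\Phi_\sigma(y)=|y|^r\psi(y/\sigma)$, $G_\sigma=\mathcal{F}_k^{-1}(\Phi_\sigma)$. Since $\operatorname{supp}\mathcal{F}_k(f)\subset B_\sigma(0)$, the symbol $\Phi_\sigma$ agrees with $|y|^r$ on $\operatorname{supp}\mathcal{F}_k(f)$, so \eqref{commutation} gives $(-\Delta_k)^{r/2}f=f\ast_k G_\sigma$. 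The convolution inequality then yields $\|(-\Delta_k)^{r/2}f\|_p\le\|G_\sigma\|_1\|f\|_p$; a change of variables using $\Phi_\sigma(y)=\sigma^r\Phi_1(y/\sigma)$, the dilation rule $\mathcal{F}_k(h(\lambda\Cdot))(y)=\lambda^{-d_k}\mathcal{F}_k(h)(y/\lambda)$, and the $d_k$-homogeneity of $d\mu_k$ collapses this to $\|G_\sigma\|_1=\sigma^r\|G_1\|_1$. Finiteness of $\|G_1\|_1$ follows from the estimate $|G_1(x)|\lesssim(1+|x|)^{-r-d_k}$ (a Dunkl Riesz-potential bound for the compactly supported symbol $|y|^r\psi(y)$), which is integrable against $d\mu_k$ precisely because $r>0$.

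For the middle inequality in~\eqref{ber-omega}, by~\eqref{omega-equiv} it suffices to bound $\|\varDelta_t^r f\|_p$ uniformly for $t\in(0,\delta]$. Combining~\eqref{transformdifference1} with the definition of $(-\Delta_k)^{r/2}$, we rewrite
\[
\mathcal{F}_k(\varDelta_t^r f)(y)=t^r\,m(t|y|)\,\mathcal{F}_k\bigl((-\Delta_k)^{r/2}f\bigr)(y),\qquad m(s)=\Bigl(\tfrac{1-j_{\lambda_k}(s)}{s^2}\Bigr)^{r/2}.
\]
The expansion $j_{\lambda_k}(s)=1-s^{2}/(2(\lambda_k+1))+O(s^4)$ shows that $m$ is smooth and bounded on $[0,\infty)$ and satisfies $m(s)=O(s^{-r})$ at infinity. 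Since $\mathcal{F}_k((-\Delta_k)^{r/2}f)$ is also supported in $B_\sigma(0)$, inserting the cutoff $\psi(y/\sigma)$ gives $\varDelta_t^r f=t^r\bigl((-\Delta_k)^{r/2}f\bigr)\ast_k H_{t,\sigma}$ with $H_{t,\sigma}=\mathcal{F}_k^{-1}\bigl(m(t|\Cdot|)\psi(\Cdot/\sigma)\bigr)$. A dyadic split of the symbol according to whether $t|y|\lesssim 1$ or $t|y|\gg 1$ — handled respectively by the scaling argument of the previous step and by the extra decay of $m$ — yields $\|H_{t,\sigma}\|_1\lesssim 1$ uniformly, hence $\|\varDelta_t^r f\|_p\lesssim t^r\|(-\Delta_k)^{r/2}f\|_p$. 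The rightmost bound in~\eqref{ber-omega} then follows by chaining this estimate with~\eqref{bernstein}.

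The principal difficulty is the uniform $L^1(d\mu_k)$ control of the kernels $G_\sigma$ and $H_{t,\sigma}$: when $r$ is not an even integer the symbol $|y|^r$ fails to be smooth at the origin, so standard Schwartz-kernel decay arguments are unavailable. Instead one must deploy a Dunkl adaptation of Riesz-potential estimates, carefully tracking how the $d_k$-homogeneity of $d\mu_k$ interacts with the dilation rule of $\mathcal{F}_k$; this is where the assumptions $r>0$ and $d_k>1$ play their essential role. A secondary subtlety is the case $p=\infty$, where one uses that $f\in C_b(\mathbb{R}^d)$ by Proposition~\ref{prop4} so that the convolution bound of Section~\ref{sec2} still applies.
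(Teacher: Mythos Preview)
The paper does not prove Proposition~\ref{prop7}; it is quoted from \cite{GorIva19} without argument, so there is no ``paper's own proof'' to compare against. I therefore assess your sketch on its own merits.

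Your reduction of \eqref{bernstein} to $\|G_\sigma\|_1=\sigma^r\|G_1\|_1$ via the Paley--Wiener description and dilation homogeneity is correct, and this is indeed the standard route. The gap is the asserted pointwise bound $|G_1(x)|\lesssim(1+|x|)^{-r-d_k}$. You label it ``a Dunkl Riesz-potential bound'' and name it the principal difficulty, but you do not actually prove it --- and it is the entire content of the proposition. The estimate is true: since $|y|^r\psi(y)$ is radial, $G_1$ is a Hankel transform of order $\lambda_k$, and the large-$|x|$ decay follows from the asymptotics $j_{\lambda_k}(t)\sim c\,t^{-\lambda_k-1/2}\cos(t-\phi)$ combined with an Erd\'elyi-type lemma for the oscillatory integral $\int_0^\infty s^{r+\lambda_k+1/2}\psi_0(s)e^{i\rho s}\,ds=O(\rho^{-r-\lambda_k-3/2})$. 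But this argument, or an equivalent one, has to be carried out; invoking ``Riesz-potential estimates'' does not do it, because the Riesz kernel $\mathcal{F}_k^{-1}(|y|^{-\alpha})=c|x|^{\alpha-d_k}$ is for \emph{negative} powers, and passing to positive $r$ requires either analytic continuation of homogeneous distributions or the oscillatory-integral analysis just described.

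For the middle inequality in \eqref{ber-omega} your approach is unnecessarily heavy and has a hidden uniformity issue: the implicit constant must be independent of $\sigma$ (otherwise the chain to $(\delta\sigma)^r\|f\|_p$ fails), so after rescaling you need $\sup_{u>0}\|\mathcal{F}_k^{-1}(m(u|\Cdot|)\psi)\|_1<\infty$, which your ``dyadic split'' sketch does not establish. A much shorter route is available once \eqref{bernstein} is in hand: it gives $f\in W_{p,k}^r$, so taking $g=f$ in the definition of the $K$-functional yields $K_r(f,\delta)_p\le\delta^r\|(-\Delta_k)^{r/2}f\|_p$, and Proposition~\ref{prop6} converts this to the bound on $\omega_r(f,\delta)_p$. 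This avoids the kernel $H_{t,\sigma}$ altogether.
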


\bigskip
\section{Littlewood--Paley-type inequalities}\label{sec4}

Recall that $\eta\in \mathcal{S}_{\mathrm{rad}}(\mathbb{R}^d)$ such that
$\eta(x)=1$ if $|x|\leq 1/2$, $\eta(x)>0$ if $|x|<1$, and $\eta(x)=0$ if
$|x|\geq 1$.
Set $\theta(x)=\eta(x)-\eta(2x)$,
\begin{equation*}
\eta_j(x)=\eta(2^{-j}x),\quad \theta_j(x)=\theta(2^{-j}x)=\eta_j(x)-\eta_{j-1}(x), \quad j\in \mathbb{Z}.
\end{equation*}
Let $\eta_jf$, $\theta_jf$ be multiplier linear operators, defined by relations
\[
\mathcal{F}_{k}(\eta_j f)=\mathcal{F}_{k}(f)\eta_j,\qquad
\mathcal{F}_{k}(\theta_j f)=\mathcal{F}_{k}(f)\theta_j,
\]
respectively.
We have 
\begin{equation}\label{properties_1}
\begin{gathered}
\operatorname{supp}\eta_j\subset B_{2^j},\quad
\operatorname{supp}\theta_j\subset B_{2^j}(0)\setminus B_{2^{j-2}}(0),\\
\eta_j(\eta_if)=\eta_i(\eta_jf)=\eta_jf,\quad j<i,
\end{gathered}
\end{equation}
and for $f\in L^p(\mathbb{R}^d, d\mu_k)$ (see \cite{GorIvaTik19,GorIva19})
\begin{equation}\label{properties_2}
\begin{gathered}
\eta_{j}f,\,\theta_{j}f\in \mathcal{B}_{p,k}^{2^j},\quad \|\eta_jf\|_p\lesssim
\|f\|_p,\quad \|\theta_jf\|_p\lesssim \|f\|_p,\\
\|f-\eta_jf\|_p\lesssim E_{2^{j-1}}(f)_p\lesssim \|f-\eta_{j-1}f\|_p,\\
\|\theta_jf\|_p\lesssim \|f-\eta_{j-1}f\|_p+\|f-\eta_jf\|_p\lesssim E_{2^{j-2}}(f)_p.
\end{gathered}
\end{equation}
Moreover,
\begin{equation}\label{properties_3}
\begin{gathered}
\eta_0(x)+\sum_{j=1}^{\infty}\theta_j(x)=1,\ x\in\mathbb{R}^d\setminus
\{0\},\quad \eta_0f+\sum_{j=1}^{\infty}\theta_jf=f,\\
\int_{\mathbb{R}^d}\theta_if\,\theta_jf\,d\mu_k=0,\quad |i-j|\ge 2,
\end{gathered}
\end{equation}
and for any function $f\in L_p(\mathbb{R}^d, d\mu_k)$ the series
$\eta_0f+\sum_{j=1}^{\infty}\theta_jf$ converges to $f$ in $L^p(\mathbb{R}^d,
d\mu_k)$.

\begin{lemma}\label{lem1}
Suppose $1\le p<\infty$ and $r>0$, then $\mathcal{S}(\mathbb{R}^d)$ is dense in
$W^{r}_{p,k}$.
\end{lemma}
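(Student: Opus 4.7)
The plan is a two-stage approximation: first a Fourier-frequency cutoff reduces $f$ to a band-limited function, then a spatial cutoff produces Schwartz approximants.

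\emph{Stage 1 (Fourier cutoff).} Set $f_n = \eta_n f$. By \eqref{properties_2}--\eqref{properties_3}, $f_n \in \mathcal{B}_{p,k}^{2^n}$ and $f_n \to f$ in $L^p$. Since the Dunkl multipliers $\eta_n(y)$ and $|y|^r$ commute on $\mathcal{F}_k(f)$,
\[
(-\Delta_k)^{r/2}(\eta_n f) = \mathcal{F}_k^{-1}\bigl(|y|^r \eta_n(y)\mathcal{F}_k(f)\bigr) = \eta_n\bigl((-\Delta_k)^{r/2}f\bigr).
\]
Because $(-\Delta_k)^{r/2}f \in L^p$, the $L^p$-convergence $\eta_n g \to g$ for $g \in L^p$ (a consequence of the partial-sum decomposition in \eqref{properties_3}) applied to $g = (-\Delta_k)^{r/2}f$ gives $(-\Delta_k)^{r/2}f_n \to (-\Delta_k)^{r/2}f$ in $L^p$. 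Hence $f_n \to f$ in $W^r_{p,k}$.

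\emph{Stage 2 (Schwartz approximation of $f_n$).} Fix $n$. By Proposition~\ref{prop4}, $f_n \in L^p \cap C_b(\mathbb{R}^d)$, hence bounded. Pick $\chi \in C_c^\infty(\mathbb{R}^d)$ with $\chi(x)=1$ for $|x|\le 1$ and set $g_R(x) = \chi(x/R)f_n(x)$. Then $g_R \in C_c^\infty(\mathbb{R}^d) \subset \mathcal{S}(\mathbb{R}^d)$ and $g_R \to f_n$ in $L^p$ by dominated convergence.

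The main obstacle is to prove $\|(-\Delta_k)^{r/2}(f_n - g_R)\|_p \to 0$; the nonlocal operator $(-\Delta_k)^{r/2}$ does not commute with the spatial cutoff. My plan is to use the subordination (Balakrishnan) representation
\[
(-\Delta_k)^{r/2}u = c_r \int_0^{\infty} t^{-r/2-1}(I - P_t^k)u\,dt, \quad 0 < r < 2,
\]
where $P_t^k$ is the Dunkl heat semigroup, an $L^p$-contraction. Applied to $u_R = (1-\chi(\cdot/R))f_n$, which is supported in $\{|x|>R\}$, bounded, and tends to $0$ in $L^p$, one splits the integral at $t=1$: for $t\ge 1$ bound $\|(I-P_t^k)u_R\|_p \le 2\|u_R\|_p \to 0$, and for $t \le 1$ use smoothness of $f_n$ via Bernstein \eqref{bernstein} together with $\|(I-P_t^k)u_R\|_p \lesssim t\|(-\Delta_k)u_R\|_p$ to absorb the $t^{-r/2}$ weight. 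For $r \ge 2$ one iterates by writing $r = 2m + s$ with $s \in [0,2)$ and using that $(-\Delta_k)^m$ is a finite-order differential-difference operator whose commutator with $\chi(\cdot/R)$ is directly controllable. A diagonal extraction from Stages~1 and~2 then produces Schwartz approximants $\phi_N \to f$ in $W^r_{p,k}$.
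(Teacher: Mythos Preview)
Your Stage~1 coincides with the paper's: both cut off in frequency via $\eta_n f$, use the commutation $(-\Delta_k)^{r/2}\eta_n f=\eta_n\bigl((-\Delta_k)^{r/2}f\bigr)$, and conclude that $\mathcal{B}_{p,k}^\infty$ is dense in $W^r_{p,k}$.

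In Stage~2 the paper takes a much shorter path. It chooses the spatial cutoff to be $\psi_\delta(x)=\psi(\delta x)$ with $\psi\in\mathcal{S}(\mathbb{R}^d)$ an \emph{entire function of exponential type~$1$} satisfying $\psi(0)=1$. Then $f_n\psi_\delta\in\mathcal{S}(\mathbb{R}^d)\cap\mathcal{B}_{p,k}^{2^n+\delta}$, so $f_n-f_n\psi_\delta\in\mathcal{B}_{p,k}^{2^n+1}$, and Bernstein's inequality~\eqref{bernstein} gives
\[
\bigl\|(-\Delta_k)^{r/2}(f_n-f_n\psi_\delta)\bigr\|_p\lesssim (2^n+1)^r\,\|f_n-f_n\psi_\delta\|_p\xrightarrow[\delta\to 0]{}0
\]
for every $r>0$ in one stroke --- no heat semigroup, no commutators, no case split at $r=2$.

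Your choice of a \emph{compactly supported} $\chi$ destroys the Bernstein-class membership of $u_R=(1-\chi_R)f_n$, which is exactly what makes Bernstein available above; this is what forces you into the Balakrishnan route. The resulting argument is incomplete as written: for the small-$t$ integral you need not merely boundedness but $\|(-\Delta_k)u_R\|_p\to 0$, which requires expanding $\Delta_k\bigl[(1-\chi_R)f_n\bigr]$ and controlling all commutator terms. Even after taking $\chi$ radial (so that $D_{j,k}$ obeys the ordinary Leibniz rule on the first pass), the second application of $D_{j,k}$ meets $\partial_j\chi_R$, which is no longer $G$-invariant, and the difference terms re-enter; they must be estimated via mean-value bounds against the $1/(a,x)$ denominators. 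For $r\ge 2$ the iteration you describe compounds this: you effectively need $\|(-\Delta_k)^{m+1}u_R\|_p\to 0$, i.e., commutators of $\chi_R$ with $(m{+}1)$-fold Dunkl Laplacians. This can all be made to work with enough bookkeeping, but none of it is carried out in your sketch, and the paper's device of an entire-function cutoff makes the whole detour unnecessary.
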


\begin{proof}
Setting
$\mathcal{B}_{p,k}^{\infty}=\bigcup_{\sigma>0}\mathcal{B}_{p,k}^{\sigma}$, in
virtue of \eqref{commutation}, \eqref{properties_2}, Propositions \ref{prop5}
and \ref{prop7}, for any $f\in W^{r}_{p,k}$ we derive that $\eta_{j}f\in
\mathcal{B}_{p,k}^{2^j}$ and
\begin{align*}
(-\Delta_k)^{r/2}\eta_{j}f&=(-\Delta_k)^{r/2}(f\ast_{k}\mathcal{F}_{k}(\eta_j))\\&
=((-\Delta_k)^{r/2}f)\ast_{k}\mathcal{F}_{k}(\eta_j))=\eta_{j}((-\Delta_k)^{r/2}f)\in
\mathcal{B}_{p,k}^{2^j}.
\end{align*}
Since, by Bernstein's inequality \eqref{bernstein}, the embedding
$\mathcal{B}_{p,k}^{2^j}\subset W^{r}_{p,k}$ holds, \eqref{properties_2}
implies
\[
\|f-\eta_jf\|_p\lesssim E_{2^{j-1}}(f)_p, \quad
\bigl\|(-\Delta_k)^{r/2}f-(-\Delta_k)^{r/2}(\eta_jf)\bigr\|_p\lesssim
E_{2^{j-1}}((-\Delta_k)^{r/2}f)_p.
\]
Hence, $\mathcal{B}_{p,k}^{\infty}$ is dense in $W^{r}_{p,k}$.

Let $f\in \mathcal{B}_{p,k}^{\sigma}$, $\delta,\varepsilon>0$ and suppose that
$\psi\in \mathcal{S}(\mathbb{R}^d)$ is an entire function of exponential type 1
such that $\psi(0)=1$. Let $\psi_{\delta}(x)=\psi(\delta x)$. Then inequality
\eqref{bernstein} and the Nikolskii inequality \cite{GorIva19} yield that
$f\psi_{\delta} \in\mathcal{S}(\mathbb{R}^d)\cap
\mathcal{B}_{p,k}^{\sigma+\delta}$. Choose $R>0$ and $0<\delta<1$ so that
\[
\int_{|x|\ge R}|f(x)|^p\,d\mu_k(x)<\varepsilon^{p},\quad |1-\psi_{\delta}
(x)|<\varepsilon\ \text{for}\ |x|\le R.
\]
Then we have
\begin{align*}
\int_{\mathbb{R}^d}|f(x)-f(x)\psi_{\delta}(x)|^p\,d\mu_k(x)&\le (1+\|\psi\|_{\infty})^p\int_{|x|\ge R}|f(x)|^p\,d\mu_k(x)\\&+\varepsilon^{p}\int_{|x|\le R}|f(x)|^p\,d\mu_k(x)\le (1+\|\psi\|_{\infty}+\|f\|_p)^p\varepsilon^p.
\end{align*}
Using again Bernstein's inequality \eqref{bernstein}, we finally obtain
\begin{align*}
\bigl\|(-\Delta_k)^{r/2}(f-f\psi_{\delta})\bigr\|_p&\le
c(k)(\sigma+1)^r\|(f-f\psi_{\delta})\|_p\\&\le
c(k)(\sigma+1)^r(1+\|\psi\|_{\infty}+\|f\|_p)\varepsilon.
\end{align*}
\end{proof}

Now we establish the desired version of the Littlewood--Paley inequalities. To
prove it, we follow the same reasoning as those in the papers
\cite{DaiDit05,DaiDitTik08,DaiWan10} (see also \cite[Chapter~7]{DaiXu15}). We
will also use  \cite{VelYes19}.

\begin{theorem}
Let $1<p<\infty$, $r\ge 0$. If $f\in W^r_{p,k}$, then
\begin{equation}\label{littlewood_1}
\bigl\|(-\Delta_k)^{r/2}f\bigr\|_{p}\asymp
\biggl\|\biggl(\sum_{j\in\mathbb{Z}}2^{2rj}|\theta_jf|^2\biggr)^{1/2}\biggr\|_{p}
\end{equation}
and
\begin{equation}\label{littlewood_2}
\bigl\|(-\Delta_k)^{r/2}f\bigr\|_{p}\asymp
\biggl\|\biggl(\bigl|(-\Delta_k)^{r/2}\eta_0f\bigr|^2+\sum_{j=1}^{\infty}2^{2rj}|\theta_jf|^2\biggr)^{1/2}\biggr\|_{p}.
\end{equation} Moreover,
\begin{equation}\label{littlewood_3}
\bigl\|(-\Delta_k)^{r/2}f\bigr\|_{p}\lesssim
\biggl\|\biggl(|\eta_0f|^2+\sum_{j=1}^{\infty}2^{2rj}|\theta_jf|^2\biggr)^{1/2}\biggr\|_{p}.
\end{equation}
\end{theorem}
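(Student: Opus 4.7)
The plan is to follow the classical Littlewood--Paley route, pushed through the Dunkl setting by Khintchine randomisation plus a Mikhlin--H\"ormander type multiplier theorem (which is available in the Dunkl setting via the references cited, in particular \cite{DaiDit05,DaiDitTik08,DaiWan10,VelYes19}). By Lemma~\ref{lem1} and a routine approximation, it suffices to work with $f\in\mathcal{S}(\mathbb{R}^d)$, where all multiplier manipulations are justified. I will prove \eqref{littlewood_1} first, then deduce \eqref{littlewood_2} and \eqref{littlewood_3}.

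\emph{Step 1 (baseline Littlewood--Paley).} First I would establish the ``$r=0$'' version: for $1<p<\infty$ and $g\in L^p(\mathbb{R}^d,d\mu_k)$,
\[
\|g\|_p\asymp \biggl\|\Bigl(\sum_{j\in\mathbb{Z}}|\theta_jg|^2\Bigr)^{1/2}\biggr\|_p.
\]
Introducing Rademacher functions $\{\varepsilon_j(t)\}$ on $[0,1]$, the family of multipliers $m_t(\xi)=\sum_{j\in\mathbb{Z}}\varepsilon_j(t)\,\theta(2^{-j}\xi)$ satisfies the Dunkl Mikhlin--H\"ormander conditions (bounds on $|\xi|^{|\alpha|}|\partial^{\alpha}m_t|$) uniformly in $t$, so by the Dunkl multiplier theorem $\|\sum_j\varepsilon_j(t)\theta_jg\|_p\lesssim\|g\|_p$ uniformly in $t$. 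Integrating $t\in[0,1]$ and invoking Khintchine's inequality gives the upper bound; the lower bound follows by duality with an analogous argument on an enlarged bump $\tilde\theta$ satisfying $\tilde\theta\theta=\theta$.

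\emph{Step 2 (incorporation of $(-\Delta_k)^{r/2}$).} Apply Step~1 to $g=(-\Delta_k)^{r/2}f$:
\[
\bigl\|(-\Delta_k)^{r/2}f\bigr\|_p\asymp \biggl\|\Bigl(\sum_{j\in\mathbb{Z}}\bigl|\theta_j(-\Delta_k)^{r/2}f\bigr|^2\Bigr)^{1/2}\biggr\|_p.
\]
Write $\theta_j(\xi)|\xi|^r=2^{rj}\psi(2^{-j}\xi)$ with $\psi(\eta)=\theta(\eta)|\eta|^r\in C_c^{\infty}(\mathbb{R}^d\setminus\{0\})$, and pick an enlarged bump $\widetilde{\theta}\in C_c^{\infty}(\mathbb{R}^d\setminus\{0\})$ with $\widetilde{\theta}=1$ on $\operatorname{supp}\theta$ so that $\psi(2^{-j}\xi)=\psi(2^{-j}\xi)\widetilde{\theta}(2^{-j}\xi)$. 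Denoting by $\Psi_j$ the Dunkl multiplier with symbol $\psi(2^{-j}\xi)$, this yields the pointwise identity
\[
\theta_j(-\Delta_k)^{r/2}f = 2^{rj}\Psi_j(\widetilde{\theta}_jf),
\]
and since $\widetilde{\theta}_jf$ differs from $\theta_jf$ only by fattening the annulus, the resulting vector-valued estimate (again obtained via Rademacher randomisation of $\{\Psi_j\}$ and the Dunkl Mikhlin--H\"ormander theorem) gives
\[
\biggl\|\Bigl(\sum_{j}\bigl|\theta_j(-\Delta_k)^{r/2}f\bigr|^2\Bigr)^{1/2}\biggr\|_p\asymp \biggl\|\Bigl(\sum_{j}2^{2rj}|\theta_jf|^2\Bigr)^{1/2}\biggr\|_p,
\]
proving \eqref{littlewood_1}.

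\emph{Step 3 (inhomogeneous versions).} For \eqref{littlewood_2}, split the sum in \eqref{littlewood_1} at $j=0$. The high-frequency part $j\ge1$ coincides with what appears on the right. For the low-frequency part, observe that $\theta_jf=\theta_j\eta_0f$ for $j\le 0$ (by \eqref{properties_1}), so applying \eqref{littlewood_1} to $\eta_0f$ in place of $f$ gives
\[
\bigl\|(-\Delta_k)^{r/2}\eta_0f\bigr\|_p\asymp \biggl\|\Bigl(\sum_{j\le 0}2^{2rj}|\theta_jf|^2\Bigr)^{1/2}\biggr\|_p,
\]
which, combined with the triangle inequality in the mixed $L^p(\ell^2)$ norm, yields \eqref{littlewood_2}. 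Finally, \eqref{littlewood_3} follows from \eqref{littlewood_2} by Bernstein's inequality \eqref{bernstein} applied to $\eta_0f\in\mathcal{B}_{p,k}^{1}$, which gives the pointwise/$L^p$ control $\|(-\Delta_k)^{r/2}\eta_0f\|_p\lesssim\|\eta_0f\|_p$.

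\emph{Main obstacle.} The crucial input is the Dunkl version of the Mikhlin--H\"ormander multiplier theorem (equivalently, the vector-valued boundedness of multipliers satisfying scale-invariant derivative bounds). Verifying that the randomised symbols $m_t$ and $\Psi_j$ satisfy these conditions uniformly—and that the resulting multiplier operators extend boundedly to $L^p(\mathbb{R}^d,d\mu_k)$ for the full range $1<p<\infty$—is the technical heart of the argument. Once this tool is in place, the rest is bookkeeping with dyadic supports and Khintchine's inequality.
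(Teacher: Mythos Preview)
Your proposal is correct and, once \eqref{littlewood_1} is in hand, identical to the paper's: both deduce \eqref{littlewood_2} by applying \eqref{littlewood_1} to $\eta_0f=\sum_{j\le 0}\theta_jf$ to handle the low-frequency block, and both get \eqref{littlewood_3} from \eqref{littlewood_2} via Bernstein's inequality $\|(-\Delta_k)^{r/2}\eta_0f\|_p\lesssim\|\eta_0f\|_p$.

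The only difference is in how \eqref{littlewood_1} itself is obtained. The paper does not prove it: it simply quotes \cite[Proposition~4.5]{VelYes19} for $f\in\mathcal{S}(\mathbb{R}^d)$ and all $r\ge0$, then invokes the density Lemma~\ref{lem1} to pass to $W^r_{p,k}$. You instead sketch a self-contained proof via Rademacher randomisation and a Dunkl Mikhlin--H\"ormander multiplier theorem, first for $r=0$ and then upgrading to $r>0$ by the multiplier substitution $\theta_j(\xi)|\xi|^r=2^{rj}\psi(2^{-j}\xi)$. This is exactly the machinery underlying the cited result, so you are effectively reproving the black box rather than taking a different route; your explicit flag that the Dunkl multiplier theorem is the crucial external input is accurate and matches what \cite{VelYes19} (and \cite{DaiWan10}) supply.
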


\begin{proof}
For $f\in \mathcal{S}(\mathbb{R}^d)$ and $r\ge 0$, equivalence
\eqref{littlewood_1} was proved in \cite[Proposition~4.5]{VelYes19}. By Lemma
\ref{lem1}, $\mathcal{S}(\mathbb{R}^d)$ is dense in $W^r_{p,k}$ and hence,
equivalence~\eqref{littlewood_1} is valid for any $f\in W^r_{p,k}$.

Inequality \eqref{bernstein} implies $\eta_0f\in W^r_{p,k}$. Applying the equality $\eta_0f(x)=\sum_{j=-\infty}^0\theta_jf(x)$ and \eqref{littlewood_1}, we have
\[
\bigl\|(-\Delta_k)^{r/2}\eta_0f\bigr\|_{p}\asymp
\biggl\|\biggl(\sum_{j=-\infty}^{0}2^{2rj}|\theta_jf|^2\biggr)^{1/2}\biggr\|_{p}.
\]
Hence,
\begin{align*}
\bigl\|(-\Delta_k)^{r/2}f\bigr\|_{p}&\asymp
\biggl\|\biggl(\sum_{j\in\mathbb{Z}}2^{2rj}|\theta_jf|^2\biggr)^{1/2}\biggr\|_{p}\\
&\asymp
\biggl\|\biggl(\sum_{j=-\infty}^{0}2^{2rj}|\theta_jf|^2\biggr)^{1/2}\Bigr\|_{p}+
\biggl\|\biggl(\sum_{j=1}^{\infty}2^{2rj}|\theta_jf|^2\biggr)^{1/2}\biggr\|_{p}\\
&\asymp\bigl\|(-\Delta_k)^{r/2}\eta_0f\bigr\|_{p}+\biggl\|\biggl(\sum_{j=1}^{\infty}2^{2rj}|\theta_jf|^2\biggr)^{1/2}\biggr\|_{p}\\
&\asymp
\biggl\|\biggl(\bigl|(-\Delta_k)^{r/2}\eta_0f\bigr|^2+\sum_{j=1}^{\infty}2^{2rj}|\theta_jf|^2\biggr)^{1/2}\biggr\|_{p},
\end{align*}
that is, \eqref{littlewood_2} is shown. Since
$\bigl\|(-\Delta_k)^{r/2}\eta_0f\bigr\|_p\lesssim\|\eta_0f\|_p $, we obtain
\eqref{littlewood_3} from \eqref{littlewood_2}.
\end{proof}

To prove Theorem \ref{thm-pitt}~(2) we will use a more general version of lower estimate in \eqref{littlewood_1} with $r=0$.

\begin{lemma}[\cite{VelYes19}]\label{lem-V}
Let $\varphi\in \mathcal{S}_{\mathrm{rad}}(\mathbb{R}^d)$,
$\operatorname{supp}\varphi\subset \{\alpha\le |x|\le \beta\}$,
$0<\alpha<\beta$, $\varphi_{j}(x)=\varphi(2^{-j}x)$,
and $\varphi_{j}f=\mathcal{F}_{k}^{-1}(\mathcal{F}_{k}(f)\varphi_{j})$. Then for
$1<p<\infty$
we have
\[
\biggl\|\biggl(\sum_{j\in
\mathbb{Z}}|\varphi_{j}f|^{2}\biggr)^{1/2}\biggr\|_{p}\lesssim \|f\|_{p}.
\]
\end{lemma}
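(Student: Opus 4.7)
The plan is to reduce the vector-valued $\ell^2$ estimate to a scalar multiplier inequality via the classical randomization trick, and then invoke a H\"ormander--Mihlin type multiplier theorem in the Dunkl setting.

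First, let $\{r_j\}_{j\in\mathbb{Z}}$ be a sequence of independent Rademacher functions on $[0,1]$. By Khintchine's inequality applied pointwise in $x$ and then integrated, one has
\[
\biggl\|\Bigl(\sum_{j\in\mathbb{Z}}|\varphi_{j}f|^{2}\Bigr)^{1/2}\biggr\|_{p}^{p}
\asymp \int_{0}^{1}\biggl\|\sum_{j\in\mathbb{Z}}r_{j}(\omega)\,\varphi_{j}f\biggr\|_{p}^{p}\,d\omega,
\]
so it suffices to bound $\|\sum_{j}r_{j}(\omega)\varphi_{j}f\|_{p}\lesssim\|f\|_{p}$ with a constant independent of $\omega\in[0,1]$.

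Next, observe that
\[
\sum_{j\in\mathbb{Z}}r_{j}(\omega)\,\varphi_{j}f = \mathcal{F}_{k}^{-1}\bigl(m_{\omega}\,\mathcal{F}_{k}(f)\bigr),\qquad
m_{\omega}(x)=\sum_{j\in\mathbb{Z}}r_{j}(\omega)\,\varphi(2^{-j}x).
\]
Since $\operatorname{supp}\varphi\subset\{\alpha\le|x|\le\beta\}$, for each fixed $x\ne 0$ at most $N=\lceil\log_{2}(\beta/\alpha)\rceil+1$ terms of the sum are nonzero, so $m_{\omega}\in C^{\infty}(\mathbb{R}^{d}\setminus\{0\})$. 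A direct computation using $|\partial^{\gamma}\varphi(2^{-j}x)|\lesssim 2^{-j|\gamma|}$ on the dyadic shell $|x|\asymp 2^{j}$ gives the uniform Mihlin-type estimate
\[
\bigl|\partial^{\gamma}m_{\omega}(x)\bigr|\lesssim |x|^{-|\gamma|},\qquad x\ne 0,
\]
for every multi-index $\gamma$, with constants depending only on $\varphi$ and $\gamma$ (not on $\omega$).

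Finally, one applies a Dunkl H\"ormander--Mihlin multiplier theorem for radial symbols on $L^{p}(\mathbb{R}^{d},d\mu_{k})$, $1<p<\infty$, which is precisely what is established in \cite{VelYes19} (and consistent with the Dunkl Littlewood--Paley theory of \cite{DaiWan10}). This yields $\|\mathcal{F}_{k}^{-1}(m_{\omega}\mathcal{F}_{k}f)\|_{p}\lesssim\|f\|_{p}$ uniformly in $\omega$, and combining with the Khintchine step completes the proof. The only real obstacle is verifying that the Dunkl multiplier theorem applies to the randomized symbol $m_{\omega}$; this is not automatic because $m_{\omega}$ is not radial, but since $\varphi$ is radial each $\varphi_{j}$ is radial, and the Mihlin bound above together with the form of the multiplier theorem for the Dunkl transform (which handles symbols satisfying Mihlin conditions and commutes with the reflection group) is sufficient.
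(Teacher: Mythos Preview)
The paper does not supply its own proof of this lemma: it merely records that the case $\alpha=1/2$, $\beta=2$ is established in \cite{VelYes19} and asserts that ``the general case is similar.'' So there is nothing to compare your argument against line by line.

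Your route---Khintchine's inequality to reduce the square-function bound to a uniform scalar multiplier estimate, followed by a H\"ormander--Mihlin theorem for the Dunkl transform---is a standard and correct way to establish such inequalities, and it is in the same spirit as the Littlewood--Paley theory developed in \cite{DaiWan10,VelYes19}. One small confusion in your final paragraph: you worry that $m_{\omega}$ is not radial, but in fact it \emph{is} radial, being a finite linear combination (at each point) of the radial functions $\varphi(2^{-j}\cdot)$ with scalar coefficients $r_{j}(\omega)\in\{-1,1\}$. So the radial Dunkl multiplier theorem applies directly, and the ``obstacle'' you flag does not arise. With that clarification your sketch is complete; just make sure the Dunkl Mihlin theorem you invoke covers radial symbols satisfying $|\partial^{\gamma}m(x)|\lesssim|x|^{-|\gamma|}$ for sufficiently many $\gamma$ (up to order roughly $\lfloor d_{k}/2\rfloor+1$), which is indeed what the cited references provide.
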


Note that in \cite{VelYes19} this result was shown for  $\alpha=1/2$, $\beta=2$. The general case  is similar.

To prove Theorem \ref{thm2}, we will also need the following result.
\begin{corollary}\label{cor1}
If $f\in L^p(\mathbb{R}^d, d\mu_k)$, $1<p<\infty$, $q=\min (p,2)$, then
\[
\|f\|_p\lesssim
\biggl(\|\eta_0f\|_p^p+\sum_{j=1}^{\infty}\|\theta_jf\|_p^q\biggr)^{1/q}.
\]
\end{corollary}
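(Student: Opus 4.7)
The plan is to combine the Littlewood--Paley upper bound \eqref{littlewood_3} (equivalently \eqref{littlewood_2}) specialized to $r=0$, which reads
\[
\|f\|_{p}\lesssim\biggl\|\biggl(|\eta_{0}f|^{2}+\sum_{j=1}^{\infty}|\theta_{j}f|^{2}\biggr)^{1/2}\biggr\|_{p},
\]
with a convexity/Minkowski step that converts the $L^{p}$-norm of the square function into the $\ell^{q}$-sum of $L^{p}$-norms of its pieces. The shape of this second step is what forces the two cases $q=p$ and $q=2$ and gives the whole dichotomy.

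First I would treat $1<p\le 2$, where $q=p$ and $p/2\le 1$. In this range the elementary subadditivity $(\sum_{j}a_{j})^{p/2}\le\sum_{j}a_{j}^{p/2}$ for $a_{j}\ge 0$ applies pointwise with $a_{0}=|\eta_{0}f|^{2}$ and $a_{j}=|\theta_{j}f|^{2}$:
\[
\biggl(|\eta_{0}f|^{2}+\sum_{j=1}^{\infty}|\theta_{j}f|^{2}\biggr)^{p/2}\le |\eta_{0}f|^{p}+\sum_{j=1}^{\infty}|\theta_{j}f|^{p}.
\]
Integrating against $d\mu_{k}$, taking the $p$-th root, and inserting the Littlewood--Paley bound gives $\|f\|_{p}\lesssim\bigl(\|\eta_{0}f\|_{p}^{p}+\sum_{j=1}^{\infty}\|\theta_{j}f\|_{p}^{p}\bigr)^{1/p}$, which is the claim with $q=p$.

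Next I would handle $2\le p<\infty$, where $q=2$ and $p/2\ge 1$. Here I would invoke Minkowski's inequality in $L^{p/2}(\mathbb{R}^{d},d\mu_{k})$ applied to the non-negative functions $|\eta_{0}f|^{2}$ and $\{|\theta_{j}f|^{2}\}_{j\ge 1}$:
\[
\biggl\||\eta_{0}f|^{2}+\sum_{j=1}^{\infty}|\theta_{j}f|^{2}\biggr\|_{p/2}\le\bigl\||\eta_{0}f|^{2}\bigr\|_{p/2}+\sum_{j=1}^{\infty}\bigl\||\theta_{j}f|^{2}\bigr\|_{p/2}=\|\eta_{0}f\|_{p}^{2}+\sum_{j=1}^{\infty}\|\theta_{j}f\|_{p}^{2}.
\]
Taking square roots and composing with the Littlewood--Paley inequality yields $\|f\|_{p}\lesssim\bigl(\|\eta_{0}f\|_{p}^{2}+\sum_{j=1}^{\infty}\|\theta_{j}f\|_{p}^{2}\bigr)^{1/2}$, the claim with $q=2$.

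There is no serious obstacle here: the Littlewood--Paley theorem proved just above does the analytic work, and the $\ell^{q}$-collapse is a purely elementary switch between pointwise subadditivity (when $p\le 2$, i.e.\ $p/2\le 1$) and the triangle inequality in $L^{p/2}$ (when $p\ge 2$, i.e.\ $p/2\ge 1$). The only mild point to keep in mind is that the two regimes are governed by whether one can interchange $\bigl(\sum\bigr)^{p/2}$ with $\sum(\,\cdot\,)^{p/2}$ in the correct direction, which is exactly what $q=\min(p,2)$ encodes.
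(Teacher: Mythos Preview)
Your proof is correct and follows essentially the same approach as the paper: start from the Littlewood--Paley bound with $r=0$, then split into the cases $1<p\le 2$ (pointwise subadditivity of $(\sum\cdot)^{p/2}$ since $p/2\le 1$) and $2\le p<\infty$ (Minkowski's inequality in $L^{p/2}$). The paper's argument is line-for-line the same.
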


\begin{proof}
The proof is carried out as the corresponding result in \cite{DaiDit05}. We give it for completeness.
If $1<p\le 2$, then $q=p$ and
\begin{align*}
\|f\|_p&\lesssim
\biggl\|\biggl(|\eta_0f|^2+\sum_{j=1}^{\infty}|\theta_jf|^2\biggr)^{1/2}\biggr\|_{p}\le
\biggl\|\biggl(|\eta_0f|^p+\sum_{j=1}^{\infty}|\theta_jf|^p\biggr)^{1/p}\biggr\|_{p}\\&=
\biggl(\|\eta_0f\|_p^p+\sum_{j=1}^{\infty}\|\theta_jf\|_p^p\biggr)^{1/p}.
\end{align*}
If $2\le p<\infty$, $r=p/2\ge 1$, then Minkowski's inequality implies
\begin{align*}
\|f\|_p&\lesssim
\biggl\|\biggl(|\eta_0f|^2+\sum_{j=1}^{\infty}|\theta_jf|^2\biggr)^{1/2}\biggr\|_{p}=
\biggl\||\eta_0f|^2+\sum_{j=1}^{\infty}|\theta_jf|^2\biggr\|_{r}^{1/2}\\&\le
\biggl(\bigl\||\eta_0f|^2\bigr\|_r+\sum_{j=1}^{\infty}\||\theta_jf|^2\|_r\biggr)^{1/2}=
\biggl(\|\eta_0f\|_p^2+\sum_{j=1}^{\infty}\|\theta_jf\|_p^2\biggr)^{1/2}.
\end{align*}

\end{proof}

\bigskip
\section{Proofs of Theorem~\ref{thm1} and Corollary~\ref{thm2}}
\label{sec5}

\begin{proof}[Proof of Theorem \ref{thm1}]
Following the corresponding proof in \cite{DaiDitTik08}, since $E_{j}(f)_{p}$,
$\omega_r(f,1/j)_{p}$, $K_{r}(f,1/j)_{p}$ are all monotonic in $j$, by
\eqref{eq3}, we can equivalently write inequality \eqref{eq4} in the form
\begin{equation}\label{ineq}
J=2^{-rn}\biggl(\sum_{j=0}^{n}2^{srj}E_{2^j}^s(f)_{p}\biggr)^{1/s}\lesssim
K_r(f,2^{-n})_{p}.
\end{equation}

Set $g_n=\eta_{n-1}f$. Applying \eqref{properties_1} and \eqref{littlewood_2} with $r=0$, we have
$E_{2^n}(f)_p\le \|f-g_n\|_{p}$, $E_{2^n}(g_n)_p=0$,
and for $0\le j<n$
\[
E_{2^j}(g_n)_p\le\|\eta_ng_n-\eta_jg_n\|_{p}=
\biggl\|\sum_{l=j+1}^n\theta_lg_n\biggr\|_{p}\lesssim
\biggl\|\biggl(\sum_{l=j+1}^n|\theta_lg_n|^2\biggr)^{1/2}\biggr\|_{p}.
\]
Hence,
\begin{align}\label{commonestimate}
J&\le2^{-rn}\biggl(\sum_{j=0}^{n-1}2^{srj}E_{2^j}^s(f-g_n)_{p}\biggr)^{1/s}{+}
2^{-rn}\biggl(\sum_{j=0}^{n-1}2^{srj}E_{2^j}^s(g_n)_{p}\biggr)^{1/s}\notag\\
&\lesssim \|f-g_n\|_{p}+2^{-rn}\biggl(\sum_{j=0}^{n-1}2^{srj}
\biggl\|\biggl(\sum_{l=j+1}^n|\theta_lg_n|^2\biggr)^{1/2}\biggr\|_{p}^s\biggr)^{1/s}.
\end{align}

Let $1<p\le 2$ and $s=2$. Using \eqref{commonestimate}, the inequality $p/2\le 1$,
\eqref{littlewood_2} with $r>0$, and Proposition~\ref{prop6}, we obtain
\begin{align*}\label{commonestimate}
J&\lesssim \|f-g_n\|_{p}+2^{-rn}\biggl(\sum_{j=0}^{n-1}2^{2rj}
\biggl\|\sum_{l=j+1}^n|\theta_lg_n|^2\biggr\|_{p/2}\biggr)^{1/2}\\
&\lesssim\|f-g_n\|_{p}+2^{-rn}\biggl(\biggl\|\sum_{j=0}^{n-1}2^{2rj}
\sum_{l=j+1}^n|\theta_lg_n|^2\biggr\|_{p/2}\biggr)^{1/2}\\
&=\|f-g_n\|_{p}+2^{-rn}\biggl(\biggl\|\sum_{l=1}^{n}|\theta_lg_n|^2
\sum_{j=0}^{l-1}2^{2rj}\biggr\|_{p/2}\biggr)^{1/2}\\ &\lesssim
\|f-g_n\|_{p}+2^{-rn}\biggl(\biggl\|\sum_{l=1}^{n}2^{2rl}|\theta_lg_n|^2
\biggr\|_{p/2}\biggr)^{1/2}\\
&=\|f-g_n\|_{p}+2^{-rn}\biggl\|\biggl(\sum_{l=1}^{n}2^{2rl}|\theta_lg_n|^2\biggr)^{1/2}
\biggr\|_{p}\\ &\lesssim
\|f-g_n\|_{p}+2^{-rn}\bigl\|(-\Delta_k)^{r/2}g_n\bigr\|_{p}\lesssim
K_r(f,2^{-n})_p.
\end{align*}
Thus, we verified \eqref{ineq} for $1<p\le 2$.

Let $2<p<\infty$ and $s=p$. Applying the duality between $\ell_{p/2}^n$ and
$\ell_q^n$, where $q=(p/2)^{'}$, we can write
\[
\sum_{j=0}^{n-1}2^{prj}\biggl(\sum_{l=j+1}^n|\theta_lg_n|^2(x)\biggr)^{p/2}
=\biggl(\sum_{j=0}^{n-1}2^{prj}a_j(x)\sum_{l=j+1}^n|\theta_lg_n|^2(x)\biggr)^{p/2},
\]
where $\sum_{j=0}^{n-1}2^{prj}a_j^q(x)=1$.
Using this, we derive
\begin{align*}\label{commonestimate}
L&=\int_{\mathbb{R}^d}\sum_{j=0}^{n-1}2^{prj}\biggl(\sum_{l=j+1}^n|\theta_lg_n|^2(x)\biggr)^{p/2}\,d\mu_k(x)\\
&=\int_{\mathbb{R}^d}\biggl(\sum_{j=0}^{n-1}2^{prj}a_j(x)\sum_{l=j+1}^n|\theta_lg_n|^2(x)\biggr)^{p/2}\,d\mu_k(x)\\
&=\int_{\mathbb{R}^d}\biggl(\sum_{l=1}^{n}|\theta_lg_n|^2(x)\sum_{j=0}^{l-1}2^{prj}a_j(x)\biggr)^{p/2}\,d\mu_k(x).
\end{align*}
Applying H\"{o}lder's inequality and \eqref{littlewood_2}, we obtain
\begin{align*}
L&\le\int_{\mathbb{R}^d}\biggl(\sum_{l=1}^{n}|\theta_lg_n|^2(x)\biggl(\sum_{j=0}^{l-1}2^{prj}\biggr)^{2/p}
\biggl(\sum_{j=0}^{n-1}2^{prj}a_j^q(x)\biggr)^{1/q}\biggr)^{p/2}\,d\mu_k(x)\\
&\lesssim
\int_{\mathbb{R}^d}\biggl(\sum_{l=1}^{n}2^{2rj}|\theta_lg_n|^2(x)\biggr)^{2/p}\,d\mu_k(x)=
\biggl\|\biggl(\sum_{l=1}^{n}2^{2rl}|\theta_lg_n|^2\biggr)^{1/2}\biggr\|_{p}^p\\
&\lesssim\bigl\|(-\Delta_k)^{r/2}g_n\bigr\|_{p}^p.
\end{align*}
Hence, from \eqref{commonestimate}, we get
\[
J\lesssim\|f-g_n\|_{p}+2^{-rn}\bigl\|(-\Delta_k)^{r/2}g_n\bigr\|_{p}\lesssim
K_r(f,2^{-n})_p,
\]
that is, \eqref{ineq} follows. Thus, \eqref{eq4} is proved.

 Since the Sobolev space is dense in
$L^{p}(\mathbb{R}^{d},d\mu_{k})$, we can assume that $f\in W^r_{p,k}$ and write
inequality \eqref{eq6} in the form
\begin{equation}\label{ineq_3}
K_r(f,2^{-n})_{p}\lesssim
2^{-rn}\biggl\{\biggl(\sum_{j=0}^{n}2^{qrj}E_{2^j}^q(f)_{p}\biggr)^{1/q}+\|f\|_p\biggr\}.
\end{equation}
Taking into account Proposition~\ref{prop6}, \eqref{littlewood_2}, Corollary~\ref{cor1},
\eqref{commutation}, and \eqref{properties_1}--\eqref{properties_3}, this gives
\begin{align*}
K_r(f,2^{-n})_{p}&\lesssim
\|f-\eta_nf\|_p+2^{-rn}\bigl\|(-\Delta_k)^{r/2}\eta_nf\bigr\|_p\\ &\lesssim
E_{2^{n-1}}(f)_p+2^{-rn}\biggl\|\biggl(\bigl|\eta_0((-\Delta_k)^{r/2}\eta_nf)\bigr|^2+
\sum_{j=1}^{\infty}\bigl|\theta_j((-\Delta_k)^{r/2}\eta_nf)\bigr|^2\biggr)\biggr\|_p\\
&\lesssim E_{2^{n-1}}(f)_p+2^{-rn}
\biggl(\bigl\|\eta_0((-\Delta_k)^{r/2}\eta_nf)\bigr\|_p^q+
\sum_{j=1}^{\infty}\bigl\|\theta_j((-\Delta_k)^{r/2}\eta_nf)\bigr\|_p^q\biggr)^{1/q}\\
&\lesssim E_{2^{n-1}}(f)_p+2^{-rn}
\biggl(\bigl\|(-\Delta_k)^{r/2}\eta_0f\bigr\|_p^q+
\sum_{j=1}^{n-1}\bigl\|(-\Delta_k)^{r/2}\theta_jf\bigr\|_p^q\\
&+\bigl\|\eta_n((-\Delta_k)^{r/2}\theta_nf)\bigr\|_p^q+
\bigl\|\eta_n((-\Delta_k)^{r/2}\theta_{n+1}f)\bigr\|_p^q\biggr)^{1/q}.
\end{align*}
Bernstein's inequality \eqref{bernstein} yields
\begin{align*}
K_r(f,2^{-n})_{p}&\lesssim
 E_{2^{n-1}}(f)_p+2^{-rn}\biggl(\|\eta_0f\|_p^q+\sum_{j=1}^{n+1}2^{qrj}\|\theta_jf\|_p^q\biggr)^{1/q}\\
&\lesssim
2^{-rn}\biggl\{\biggl(\sum_{j=0}^{n}2^{qrj}E_{2^j}^q(f)_{p}\biggr)^{1/q}+\|f\|_p\biggr\}.
\end{align*}
\end{proof}

\begin{proof}[Proof of Corollary \ref{thm2}]
 Inequality \eqref{eq5} can be equivalently written as follows
\[
J=2^{-rn}\biggl(\sum_{j=0}^{n}2^{srj}\omega_m^s(f,2^{-j})_{p}\biggr)^{1/s}\lesssim
\omega_r(f,2^{-n})_{p}+2^{-rn}\|f\|_{p}.
\]
Using \eqref{eq2},  Hardy's inequality \eqref{har-1}
\[
\sum_{j=0}^{n}2^{-\gamma j}\biggl(\sum_{l=0}^{j}A_l\biggr)^{q} \lesssim
\sum_{j=0}^{n}2^{-\gamma j}A_j^{q},
\]
and \eqref{eq4}, we have
\begin{align*}
J&\lesssim 2^{-rn}\biggl(\sum_{j=0}^{n}2^{-(m-r)sj}\biggl(\sum_{l=0}^{j}2^{ml}E_{2^{l}}^s(f)_{p}\biggr)^{s}\biggr)^{1/s}+2^{-rn}\|f\|_{p}\\
&\lesssim 2^{-rn}\biggl(\sum_{j=0}^{n}2^{rsj}E_{2^{j}}^s(f)_{p}\biggr)^{1/s}+2^{-rn}\|f\|_{p}\\
&\lesssim\omega_r(f,2^{-n})_{p}+2^{-rn}\|f\|_{p}.
\end{align*}
Inequality \eqref{eq7} follows from \eqref{ineq_3} and Jackson's inequality \eqref{eq1}. \end{proof}

\section{Proofs of Theorem \ref{thm3}}\label{sec6}

\begin{proof}[Proof of Theorem \ref{thm3} for the best approximants]
It follows from \cite[Theorem1]{LimSma91} that given a function $f\in
L^{p}(\mathbb{R}^{d},d\mu_{k})$, $1<p<\infty$, for any entire function $g\in
\mathcal{B}_{p,k}^{\sigma}$ one has
\begin{equation}\label{strongunique_1}
\|f-f_{\sigma}\|_{p}^s\le \|f-g\|_{p}^s-A\|g-f_{\sigma}\|_{p}^s,\quad s=\max(p,2),
\end{equation}
\begin{equation}\label{strongunique_2}
\|f-f_{\sigma}\|_{p}^q\le \|f-g\|_{p}^q-B\|g-f_{\sigma}\|_{p}^q,\quad q=\min(p,2),
\end{equation}
where $f_{\sigma}\in \mathcal{B}_{p,k}^{\sigma}$ is the best approximant of $f$
and positive constants $A,B$ are independent of $f,f_{\sigma},g$.

Following similar arguments as those in \cite{KolTik19}, let us prove the
left-hand side inequality in \eqref{eq8}. Using Hardy's inequality
\eqref{har-1}
\begin{equation}\label{Hardy}
\sum_{j=n}^{\infty}2^{-rj}\biggl(\sum_{l=n}^{j}A_l\biggr)^{q}\lesssim
\sum_{j=n}^{\infty}2^{-rj}A_j^{q},
\end{equation}
inequality \eqref{strongunique_1}, Proposition~\ref{prop6}, and \eqref{eq3}, we
derive that
\begin{align*}
&\quad
\sum_{j=n+1}^{\infty}2^{-srj}\bigl\|(-\Delta_k)^{r/2}f_{2^j}\bigr\|_{p}^s\\
&=\sum_{j=n+1}^{\infty}2^{-srj}\biggl\|\sum_{l=n+1}^{j}(-\Delta_k)^{r/2}(f_{2^l}-f_{2^{l-1}})+(-\Delta_k)^{r/2}f_{2^n}\biggr\|_{p}^s\\
&\lesssim\sum_{j=n+1}^{\infty}2^{-srj}\biggl\|\sum_{l=n+1}^{j}\bigl((-\Delta_k)^{r/2}f_{2^l}-
(-\Delta_k)^{r/2}f_{2^{l-1}}\bigr)\biggr\|_{p}^s+
2^{-srn}\bigl\|(-\Delta_k)^{r/2}f_{2^n}\bigr\|_{p}^s\\
&\lesssim\sum_{j=n+1}^{\infty}2^{-srj}\biggl(\sum_{l=n+1}^{j}\bigl\|(-\Delta_k)^{r/2}f_{2^l}-(-\Delta_k)^{r/2}f_{2^{l-1}}\bigr\|_{p}\biggr)^s+
2^{-srn}\bigl\|(-\Delta_k)^{r/2}f_{2^n}\bigr\|_{p}^s\\
&\lesssim\sum_{j=n+1}^{\infty}2^{-srj}\bigl\|(-\Delta_k)^{r/2}f_{2^j}-(-\Delta_k)^{r/2}f_{2^{j-1}}\bigr\|_{p}^s+
2^{-srn}\bigl\|(-\Delta_k)^{r/2}f_{2^n}\bigr\|_{p}^s.
\end{align*}
Then
Bernstein's inequality \eqref{bernstein} implies
\begin{align*}
&\quad
\sum_{j=n+1}^{\infty}2^{-srj}\bigl\|(-\Delta_k)^{r/2}f_{2^j}\bigr\|_{p}^s
\lesssim\sum_{j=n+1}^{\infty}\|f_{2^j}-f_{2^{j-1}}\|_{p}^s+2^{-srn}\bigl\|(-\Delta_k)^{r/2}f_{2^n}\bigr\|_{p}^s\\
&\lesssim\frac{1}{A}\sum_{j=n+1}^{\infty}(\|f-f_{2^{j-1}}\|_{p}-\|f-f_{2^{j}}\|_{p})^s+2^{-srn}\bigl\|(-\Delta_k)^{r/2}f_{2^n}\bigr\|_{p}^s\\
&\lesssim\|f-f_{2^{n}}\|_{p}^s+2^{-srn}\bigl\|(-\Delta_k)^{r/2}f_{2^n}\bigr\|_{p}^s\lesssim
K_r^s(2^{-n},f)_p\lesssim \omega_r^s(2^{-n},f)_p.
\end{align*}
To show the right-hand side inequality in \eqref{eq8}, by \eqref{eq3} and
\eqref{K-property2}, we have
\begin{align*}
\omega_r^q(2^{-n},f)_p&\lesssim K_r^q(2^{-n},f)_p\\
&\lesssim\|f-f_{2^{n+1}}\|_{p}^q+2^{-qrn}\bigl\|(-\Delta_k)^{r/2}f_{2^{n+1}}\bigr\|_{p}^q\\
&\lesssim\sum_{j=n+2}^{\infty}\bigl(\|f-f_{2^{j-1}}\|_{p}^q-\|f-f_{2^{j}}\|_{p}^q\bigr)+
2^{-qrn}\bigl\|(-\Delta_k)^{r/2}f_{2^{n+1}}\bigr\|_{p}^q\\
&\lesssim\sum_{j=n+2}^{\infty}(\|f-f_{2^{j-1}}(f_{2^{j}})\|_{p}^q-\|f-f_{2^{j}}\|_{p}^q)+
2^{-qrn}\bigl\|(-\Delta_k)^{r/2}f_{2^{n+1}}\bigr\|_{p}^q.
\end{align*}
Using \eqref{strongunique_2} and the following Jackson inequality \cite{GorIva19}
\[
E_{\sigma}(f)_{p}\lesssim \sigma^{-r}\,
\bigl\|(-\Delta_k)^{r/2}f\bigr\|_{p},\quad f\in W^{r}_{p,k},\ 1\le p\le\infty,\ \sigma, r>0,
\]
we obtain
\begin{align*}
\omega_r^q(2^{-n},f)_p& \lesssim
\sum_{j=n+2}^{\infty}\|f_{2^{j}}-f_{2^{j-1}}(f_{2^{j}})\|_{p}^q+
2^{-qrn}\bigl\|(-\Delta_k)^{r/2}f_{2^{n+1}}\bigr\|_{p}^q\\ &\lesssim
\sum_{j=n+2}^{\infty}E_{2^{j-1}}(f_{2^j})_{p}+
2^{-qrn}\bigl\|(-\Delta_k)^{r/2}f_{2^{n+1}}\bigr\|_{p}^q\\& \lesssim
\sum_{j=n+1}^{\infty}2^{-qrj}\bigl\|(-\Delta_k)^{r/2}f_{2^j}\bigr\|_{p},
\end{align*}
completing the proof.
\end{proof}

\begin{proof}[Proof of Theorem \ref{thm3} for the de la Vall\'{e}e Poussin
type operators]
We will show that for $s=\max(p,2)$ and $q=\min(p,2)$,
\begin{align}
\biggl(\sum_{j=n+1}^{\infty}2^{-srj}\bigl\|(-\Delta_k)^{r/2}\eta_{j}f\bigr\|_{p}^s\biggr)^{1/s}&\lesssim
\omega_r(f,2^{-n})_{p}\notag\\&\lesssim
\biggl(\sum_{j=n+1}^{\infty}2^{-qrj}\bigl\|(-\Delta_k)^{r/2}\eta_{j}f\bigr\|_{p}^q\biggr)^{1/q}.
\label{eq9}
\end{align}

To obtain the left-hand side estimate, we have 
\begin{align}\label{ineq_5}
\sum_{j=n+1}^{\infty}2^{-srj}\bigl\|(-\Delta_k)^{r/2}\eta_{j}f\bigr\|_{p}^s
 &\lesssim
\sum_{j=n+1}^{\infty}2^{-srj}\bigl\|(-\Delta_k)^{r/2}(\eta_{j}-\eta_n)f\bigr\|_{p}^s
\\&+\bigl\|(-\Delta_k)^{r/2}\eta_nf)\bigr\|_{p}^s \notag
=:J+\bigl\|(-\Delta_k)^{r/2}\eta_nf)\bigr\|_{p}^s.
\end{align}
In light of \eqref{littlewood_3}, we obtain
\begin{align*}
J&\lesssim
\sum_{j=n+1}^{\infty}2^{-srj}\biggl\|\biggl(|\eta_0((\eta_{j}-\eta_n)f)|^2+\sum_{l=1}^{\infty}2^{2rl}
|\theta_l((\eta_{j}-\eta_n)f)|^2\biggr)^{1/2}\biggr\|_{p}^s\\
&=\sum_{j=n+1}^{\infty}2^{-srj}\biggl\|\biggl(\sum_{l=n}^{j+1}2^{2rl}|\theta_l((\eta_{j}-\eta_n)f)|^2\biggr)^{1/2}\biggr\|_{p}^s.
\end{align*}
If $l=n,n+1$, $j\ge n+1$, then \eqref{properties_1} and \eqref{properties_2}
yield
\[
\|\theta_l((\eta_j-\eta_n)f)\|_p\lesssim \|(\eta_j-\eta_n)f\|_p=\|\eta_j(f-\eta_nf)\|_p\lesssim \|f-\eta_nf\|_p.
\]
If $l=j,j+1$, $j\ge n+1$, then
\[
\|\theta_l((\eta_j-\eta_n)f)\|_p=\|\eta_j(\theta_l(f-\eta_nf))\|_p\lesssim \|\theta_l(f-\eta_nf)\|_p.
\]
If $n+2\le l\le j-1$, then $\theta_l((\eta_j-\eta_n)f)=\theta_l(f-\eta_nf)$.

Hence,
\[
J\lesssim \sum_{j=n+1}^{\infty}2^{-srj}\biggl\|\biggl(\sum_{l=n}^{j+1}2^{2rl}|\theta_l(f-\eta_nf)|^2\biggr)^{1/2}\biggr\|_{p}^s+\|f-\eta_nf\|_p^s.
\]
Taking into account Minkowski's inequality with $\frac{p}{s}\le 1$, Hardy's
inequality \eqref{Hardy} and equivalence \eqref{littlewood_2}, we obtain
\begin{align*}
&\quad
\sum_{j=n+1}^{\infty}2^{-srj}\biggl\|\biggl(\sum_{l=n}^{j+1}2^{2rl}|\theta_l(f-\eta_nf)|^2\biggr)^{1/2}\biggr\|_{p}^s\\
&\le
\biggl\|\sum_{j=n+1}^{\infty}2^{-srj}\biggl(\sum_{l=n}^{j+1}2^{2rl}|\theta_l(f-\eta_nf)|^2\biggr)^{s/2}\biggr\|_{p/s}\\
&\lesssim\biggl\|\sum_{j=n}^{\infty}|\theta_l(f-\eta_nf)|^{s}\biggr\|_{p/s}
\lesssim\biggl\|\biggl(\sum_{j=n}^{\infty}|\theta_l(f-\eta_nf)|^{2}\biggr)^{1/2}\biggr\|_{p}^{\tau}\lesssim \|f-\eta_nf\|_p^s.
\end{align*}
Therefore, $J\lesssim \|f-\eta_nf\|_p^s$ and by \eqref{ineq_5}, we arrive at
\[
\biggl(\sum_{j=n+1}^{\infty}2^{-srj}\bigl\|(-\Delta_k)^{r/2}\eta_{2^j}f\bigr\|_{p}^s\biggr)^{1/s}\lesssim
\|f-\eta_nf\|_p+\bigl\|(-\Delta_k)^{r/2}\eta_nf)\bigr\|_{p}\lesssim K_r(f,2^{-n})_{p}.
\]
Proposition \ref{prop6} concludes the proof of the left-hand side inequality in \eqref{eq9}.

To verify the right-hand side inequality in \eqref{eq9}, it suffices to prove that
\begin{equation}\label{ineq_6}
K_r(f,2^{-n})_{p}\lesssim
\biggl(\sum_{j=n+1}^{\infty}2^{-qrj}\bigl\|(-\Delta_k)^{r/2}\eta_{2^j}f\bigr\|_{p}^q\biggr)^{1/q}.
\end{equation}
By Proposition~\ref{prop6} and \eqref{properties_2}, we have
\begin{equation}\label{ineq_7}
K_r(f,2^{-n})_{p}^q\lesssim
\|f-\eta_nf\|_p^q+2^{-qrn}\bigl\|(-\Delta_k)^{r/2}\eta_nf)\bigr\|_{p}^q.
\end{equation}
Using \eqref{littlewood_2}, the inequality $|\theta_j(f-\eta_nf)|^2\le
2(|\theta_jf|^2+|\theta_j(\eta_nf)|^2)$, \eqref{properties_2} and the
equalities $\theta_j(\eta_nf)=0$ for $j\ge n+2$, $\theta_j(f-\eta_nf)=0$ for
$j\le n-1$, we obtain
\[
\|\theta_j(f-\eta_nf)\|_p\lesssim \|\theta_jf\|_p,\quad j=n,\,n+1,
\]
and
\begin{align*}
\|f-\eta_nf\|_p^q&\lesssim \biggl\|\biggl(\sum_{j=n}^{\infty}|\theta_j(f-\eta_nf)|^2\biggr)^{1/2}\biggr\|_{p}^q\\
&\lesssim \biggl\|\biggl(\sum_{j=n}^{\infty}|\theta_jf|^2\biggr)^{1/2}\biggr\|_{p}^q\lesssim
\biggl\|\biggl(\sum_{j=n}^{\infty}|\theta_jf|^q\biggr)^{1/q}\biggr\|_{p}^q\\
&=\biggl\|\biggl(\sum_{j=n}^{\infty}2^{-rqj}(|\theta_jf|^2\,2^{2rj})^{q/2}\biggr)^{1/q}\biggr\|_{p}^q\\
&\lesssim \biggl\|\sum_{j=n}^{\infty}2^{-rqj}\biggl(\sum_{l=n}^{j+2}2^{2rl}|\theta_l(\eta_{j+1}f)|^2\biggr)^{q/2}\biggr\|_{p/q}\\
&\lesssim \sum_{j=n}^{\infty}2^{-rqj}\biggl\|\biggl(\sum_{l=1}^{\infty}2^{2rl}|\theta_l(\eta_{j+1}f)|^2\biggr)^{1/2}\biggr\|_{p}^q.
\end{align*}
In view of \eqref{littlewood_2},
\[
\|f-\eta_nf\|_p^q\lesssim\sum_{j=n}^{\infty}2^{-rqj}\bigl\|(-\Delta_k)^{r/2}(\eta_{j+1}f)\bigr\|_{p}^q\lesssim
\sum_{j=n}^{\infty}2^{-rqj}\bigl\|(-\Delta_k)^{r/2}(\eta_{j}f)\bigr\|_{p}^q.
\]
This and \eqref{ineq_7} imply \eqref{ineq_6}.
\end{proof}


\bigskip
\section{Proofs of Theorems~\ref{thm5}--\ref{thm-besov-pitt}}\label{sec7}

\begin{proof}[Proof of Theorem \ref{thm-pitt}]
First we obtain Pitt-type estimates \eqref{pitt1} and \eqref{pitt2}.
 For $1<p\le 2$ and $p\le q\le p'$ the inequality
\[
\bigl\||x|^{d_{k}(1/p'-1/q)}\mathcal{F}_{k}(f)\bigr\|_{q}\lesssim
\|f\|_{p}
\]
immediately follows from Proposition~\ref{prop-class} and the interpolation theorem
\cite[Theorem~2]{Ste56}.

Let now $2\le p<\infty$ and $p'\le q\le p$.
By Proposition~\ref{prop2.1}~(4) to obtain estimate \eqref{pitt2}, we prove that
\begin{equation}\label{pi}
\|\mathcal{F}_{k}(f)\|_{p}\lesssim
\bigl\||x|^{d_{k}(1/p'-1/q)}f\bigr\|_{q}.
\end{equation}
Proposition~\ref{prop-class} implies the following Hardy--Littlewood type
inequality: 
\begin{align}
\|\mathcal{F}_{k}(f)\|_{p}&=\sup_{\|g\|_{p'}\le 1}
\biggl|\int_{\mathbb{R}^{d}}\mathcal{F}_{k}(f)g\,d\mu_{k}\biggr|=
\sup_{\|g\|_{p'}\le 1}
\biggl|\int_{\mathbb{R}^{d}}f\mathcal{F}_{k}(g)\,d\mu_{k}\biggr|\notag\\
&=\sup_{\|g\|_{p'}\le 1}
\biggl|\int_{\mathbb{R}^{d}}|x|^{d_{k}(1-2/p)}f\,|x|^{d_{k}(1-2/p')}
\mathcal{F}_{k}(g)\,d\mu_{k}\biggr|\notag\\ &\le
\bigl\||x|^{d_{k}(1-2/p)}f\bigr\|_{p}\sup_{\|g\|_{p'}\le
1}\bigl\||x|^{d_{k}(1-2/p')}\mathcal{F}_{k}(g)\bigr\|_{p'}\lesssim
\bigl\||x|^{d_{k}(1-2/p)}f\bigr\|_{p}. \label{dhl-ineq}
\end{align}
As usual, we first obtain this inequality for $f,g\in \mathcal{S}(\mathbb{R}^d)$ 
and then we use
density arguments to consider the general case $|\Cdot|^{d_{k}(1-2/p)}f\in
L^{p}(\mathbb{R}^{d},d\mu_{k})$.
Interpolating  between $\|\mathcal{F}_{k}(f)\|_{p}\le \|f\|_{p'}$ and \eqref{dhl-ineq}, we
arrive at inequality \eqref{pi}.

\smallbreak
Second we derive Kellogg-type inequalities \eqref{kell1} and \eqref{kell2}.
Let $1<p\le 2$ and $f\in L^p(\mathbb{R}^d,d\mu_k)$. To verify \eqref{kell1},
 we will use Lemma \ref{lem-V} for  $\varphi$ with support in 
  the annulus
  $\{1/2\le |x|\le 3 \}$ and such that $\varphi(x)=1$ for $1\le |x|\le 2$.
Then
\begin{equation}\label{phi-chi}
\chi_{j}(x)\equiv\chi_{\{2^{j}\le |x|<2^{j+1}\}}(x)\le \varphi_{j}(x),\quad x\in \mathbb{R}^{d}.
\end{equation}

Putting  $A_{j}=|\varphi_{j}f|^{p}$ and $\alpha=2/p$,
Lemma ~\ref{lem-V} gives
\[
\|f\|_{p}\gtrsim \biggl(\int_{\mathbb{R}^{d}}\biggl(\sum_{j\in
\mathbb{Z}}|\varphi_{j}f|^{2}\biggr)^{p/2}\,d\mu_{k}\biggr)^{1/p}=
\biggl(\int_{\mathbb{R}^{d}}\biggl(\sum_{j\in
\mathbb{Z}}A_{j}^{\alpha}\biggr)^{1/\alpha}\,d\mu_{k}\biggr)^{1/p}.
\]
Making use of Minkowsky's inequality 
\[
\biggl(\sum_{j\in
\mathbb{Z}}\biggl(\int_{\mathbb{R}^{d}}A_{j}\,d\mu_{k}\biggr)^{\alpha}\biggr)^{1/\alpha}\le
\int_{\mathbb{R}^{d}}\biggl(\sum_{j\in
\mathbb{Z}}A_{j}^{\alpha}\biggr)^{1/\alpha}\,d\mu_{k},
\]
we derive that \[
\|f\|_{p}\gtrsim \biggl(\sum_{j\in
\mathbb{Z}}\biggl(\int_{\mathbb{R}^{d}}A_{j}\,d\mu_{k}\biggr)^{\alpha}\biggr)^{1/(\alpha p)}=
\biggl(\sum_{j\in
\mathbb{Z}}\|\varphi_{j}f\|_{p}^{2}\biggr)^{1/2}.
\]
Applying  the Hausdorff--Young inequality \eqref{hy-ineq} for $\varphi_{j}f$ and   \eqref{phi-chi}, we have

\[
\|\varphi_{j}f\|_{p}\gtrsim\|\mathcal{F}_{k}(\varphi_{j}f)\bigr\|_{p'}= \|\mathcal{F}_{k}(f)\varphi_{j}\bigr\|_{p'}\ge
\|\mathcal{F}_{k}(f)\chi_{j}\|_{p'},
\]
Thus, the proof of \eqref{kell1} is complete.

If $2\le p<\infty$ and $\bigl(\sum_{j\in
\mathbb{Z}}\,\bigl\|\mathcal{F}_{k}(f)\chi_{j}\bigr\|_{p'}^{2}\bigr)^{1/2}<\infty$,
similarly \eqref{dhl-ineq}, we use duality argument to show
\eqref{kell2}.
Indeed, we apply
 Plancherel's theorem, H\"{o}lder's inequality and  \eqref{kell1} to get

\begin{align*}
&\quad \int_{\mathbb{R}^{d}}f\overline{g}\,d\mu_{k}=
\int_{\mathbb{R}^{d}}\mathcal{F}_{k}(f)\,\overline{\mathcal{F}_{k}(g)}\,d\mu_{k}=
\sum_{j\in \mathbb{Z}}\int_{\mathbb{R}^{d}}\mathcal{F}_{k}(f)\chi_{j}\,\overline{\mathcal{F}_{k}(g)}\chi_{j}\,d\mu_{k}\\
&\le \sum_{j\in \mathbb{Z}}\,\|\mathcal{F}_{k}(f)\chi_{j}\bigr\|_{p'}
\,\bigl\|\mathcal{F}_{k}(g)\chi_{j}\bigr\|_{p}\le \biggl(\sum_{j\in
\mathbb{Z}}\,\|\mathcal{F}_{k}(f)\chi_{j}\|_{p'}^{2}\biggr)^{1/2} \biggl(\sum_{j\in
\mathbb{Z}}\,\bigl\|\mathcal{F}_{k}(g)\chi_{j}\bigr\|_{p}^{2}\biggr)^{1/2}\\
&\lesssim \biggl(\sum_{j\in
\mathbb{Z}}\,\|\mathcal{F}_{k}(f)\chi_{j}\|_{p'}^{2}\biggr)^{1/2}\|g\|_{p'}
\end{align*}
completing the proof of  \eqref{kell2}.
\end{proof}

\begin{remark}\label{rem-pitt-kell}
Inequalities \eqref{K-1} and \eqref{K-2} easily follow from H\"{o}lder's
inequality for dyadic blocks and the monotonicity of $l_p$-norms.
For example, in order to show \eqref{K-1} with $1<p\le 2\le q\le p'$, we use
$\bigl\||x|^{d_{k}(1/p'-1/q)}g\chi_{j}\bigr\|_{q}\lesssim \|g\chi_{j}\|_{p'}$, $j\in \mathbb{Z}$, to get
\begin{align*}
\biggl(\sum_{j\in
\mathbb{Z}}\,\bigl\|\mathcal{F}_{k}(f)\chi_{j}\bigr\|_{p'}^{2}\biggr)^{1/2}&\gtrsim
\biggl(\sum_{j\in
\mathbb{Z}}\,\bigl\||x|^{d_{k}(1/p'-1/q)}\mathcal{F}_{k}(f)\chi_{j}\bigr\|_{q}^{2}\biggr)^{1/2}\\
&\ge
\biggl(\sum_{j\in
\mathbb{Z}}\,\bigl\||x|^{d_{k}(1/p'-1/q)}\mathcal{F}_{k}(f)\chi_{j}\bigr\|_{q}^{q}\biggr)^{1/q}=
\bigl\||x|^{d_{k}(1/p'-1/q)}\mathcal{F}_{k}(f)\bigr\|_{q}.
\end{align*}

Let us now show that \eqref{K-1} and \eqref{K-2} are sharp. For large enough integer $N$
 take Schwartz functions  $\psi_{l}$, $l=1,\ldots,N$ such that $\operatorname{supp}\psi_{l}\subset \{2^{l}+\varepsilon\le |x|\le
2^{l}+2\varepsilon\}$ for sufficiently small $\varepsilon>0$ and
$\|\psi_{l}\|_{p'}=1$. Then, by H\"{o}lder's inequality
   we have
$\bigl\||x|^{d_{k}(1/p'-1/q)}\psi_{l}\bigr\|_{q}\lesssim
\|\psi_{l}\|_{p'}=1$. Similarly,
$\bigl\||x|^{d_{k}(1/p'-1/q)}\psi_{l}\bigr\|_{q}\gtrsim 1$ for $q\ge p'$.

Consider $$f_{N}=\sum_{l=1}^{N}l^{-1/2}\mathcal{F}_{k}^{-1}(\psi_{l}).$$ Since supports  of
$\psi_{l}$
are disjoint sets, we get 
\[
\biggl(\sum_{j\in
\mathbb{Z}}\,\bigl\|\mathcal{F}_{k}(f_{N})\chi_{j}\bigr\|_{p'}^{2}\biggr)^{1/2}\asymp
\biggl(\sum_{l=1}^{N}l^{-1}\biggr)^{1/2}\asymp (\ln N)^{1/2}.
\]
Thus, for $2\le q\le p'$, 
we arrive at
\[
\bigl\||x|^{d_{k}(1/p'-1/q)}\mathcal{F}_{k}(f_{N})\bigr\|_{q}=
\biggl(\sum_{l=1}^{N}l^{-q/2}\bigl\||x|^{d_{k}(1/p'-1/q)}\psi_{l}\bigr\|_{q}^{q}\biggr)^{1/q}\lesssim
\biggl(\sum_{l=1}^{N}l^{-q/2}\biggr)^{1/q}
\]
and the reverse estimate for $p'\le q\le 2$. 
This show that for 
$q\ne 2$, estimates \eqref{K-1} and \eqref{K-2} are optimal. 
\qed
\end{remark}

\begin{proof}[Proof of Theorem \ref{thm5}]
To show the estimate
\begin{equation}\label{pitt-omega}
\bigl\||x|^{d_{k}(1/p'-1/q)}\min\{1,(\delta|x|)^{r}\}\mathcal{F}_{k}(f)\bigr\|_{q}\lesssim
\omega_r(f,\delta)_{p},\quad 1<p\le 2,\quad p\le q\le p',
\end{equation}
we use Pitt's inequality \eqref{pitt1} for the difference~\eqref{difference} in
place of $f$. Using also \eqref{transformdifference1}, we have
\[
\bigl\||x|^{d_{k}(1/p'-1/q)}(1-j_{\lambda_{k}}(\delta|x|))^{r/2}\mathcal{F}_{k}(f)\bigr\|_{q}\lesssim
\|\varDelta_\delta^rf\|_{p}.
\]
To conclude the proof, we use \eqref{omega-equiv} and the fact that for
$\lambda>-1/2$ one has
\[
1-j_{\lambda}(t)\asymp \min\{1,t^{2}\}
\]
uniformly in $t\ge 0$. The latter follows from the known properties of the
Bessel function:
\[
j_{\lambda}(t)=1-\frac{t^{2}}{4(\lambda+1)}+O(t^{4}),\quad t\to 0,
\]
\[
|j_{\lambda}(t)|\le \min\{1,C_{\lambda}t^{-\lambda-1/2}\},\quad t>0.
\]

The reverse inequality to \eqref{pitt-omega} for $2\le p<\infty$, $p'\le q\le p$ can be derived similarly with the help of Pitt's inequality \eqref{pitt2}.

Further, proceeding as in the proof of  \eqref{pitt-omega} with the help of Kellogg's inequality \eqref{kell1},
we establish for $1<p\le 2$
\[
\biggl(\sum_{j\in
\mathbb{Z}}\,\bigl\|\min\{1,(\delta|x|)^{r}\}\mathcal{F}_{k}(f)\chi_{j}\bigr\|_{p'}^{2}\biggr)^{1/2}\lesssim
\omega_r(f,\delta)_{p},
\]
which is equivalent to
\[
\biggl(\sum_{j\in
\mathbb{Z}}\min\{1,(2^{j}\delta)^{2r}\}\bigl\|\mathcal{F}_{k}(f)\chi_{j}\bigr\|_{p'}^{2}\biggr)^{1/2}\lesssim
\omega_r(f,\delta)_{p}.
\]
The case $2\le p<\infty$ is similar. 
\end{proof}

\begin{proof}[Proof of Theorem \ref{thm-besov}]
Relation  \eqref{B-omega} immediately follows from the fact that $\omega_{r}(f,t)_{p}\asymp \omega_{r}(f,2t)_{p}$;
see \eqref{omega-lambda}.

In light of  \eqref{properties_2} and Jackson's inequality  \eqref{eq1}, we derive
\begin{align*}
\sum_{j=1}^{\infty}2^{s\vartheta j} \|\theta_{j}f\|_{p}^\vartheta&\lesssim
\|f\|_p^\vartheta+
\sum_{j=1}^{\infty}2^{s\vartheta j} \|f-\eta_{j}f\|_{p}^\vartheta\lesssim
\|f\|_p^\vartheta+
\sum_{j=0}^{\infty}2^{s\vartheta j} \bigl(E_{2^j}(f)_p\bigr)^\vartheta\\
&\lesssim \|f\|_p^\vartheta+
\sum_{j=0}^{\infty}2^{s\vartheta j}
\bigl(\omega_{r}(f,2^{-j})_{p}\bigr)^{\vartheta}.
\end{align*}
Therefore, to verify  \eqref{B-E},
\eqref{B-eta}, and \eqref{B-theta}, it is enough to show that
\[
\sum_{j=0}^{\infty}2^{s\vartheta j}
\bigl(\omega_{r}(f,2^{-j})_{p}\bigr)^{\vartheta}\lesssim \|f\|_p^{\vartheta}+
\sum_{j=1}^{\infty}2^{s\vartheta j} \|\theta_{j}f\|_{p}^\vartheta.
\]
Using  \eqref{properties_2}, \eqref{properties_3},
\eqref{omega-norma}, and  \eqref{ber-omega} and setting  $\theta_{0}=\eta_{0}$, we have
for $f=\sum_{l=0}^{\infty}\theta_jf$ and $j\ge 0$
\[
\omega_{r}(f,2^{-j})_{p}\lesssim
\sum_{l=0}^{j}\omega_{r}(\theta_lf,2^{-j})_{p}+
\sum_{l=j}^{\infty}\omega_{r}(\theta_lf,2^{-j})_{p}\lesssim
\sum_{l=0}^{j}2^{r(l-j)}\|\theta_lf\|_{p}+
\sum_{l=j}^{\infty}\|\theta_lf\|_{p}.
\]
This and Hardy's inequalities  \eqref{har-1} and \eqref{har-2} with  $r>s$ imply
\begin{align*}
\sum_{j=0}^{\infty}2^{s\vartheta j}
\bigl(\omega_{r}(f,2^{-j})_{p}\bigr)^{\vartheta}&\lesssim
\sum_{j=0}^{\infty}2^{-j(r-s)\vartheta}
\biggl(\sum_{l=0}^{j}2^{rl}\|\theta_lf\|_{p}\biggr)^{\vartheta}+
\sum_{j=0}^{\infty}2^{s\vartheta j}
\biggl(\sum_{l=j}^{\infty}\|\theta_lf\|_{p}\biggr)^{\vartheta}
\\
&\lesssim
\sum_{j=0}^{\infty}2^{s\vartheta j}\|\theta_jf\|_{p}^{\vartheta} \lesssim \|\eta_{0}f\|_{p}^{\vartheta}+
\sum_{j=1}^{\infty}2^{s\vartheta j}\|\theta_jf\|_{p}^{\vartheta}
\\
&\lesssim
\|f\|_{p}^{\vartheta}+
\sum_{j=1}^{\infty}2^{s\vartheta j}\|\theta_jf\|_{p}^{\vartheta}.
\end{align*}

Relation  \eqref{B-B} follows from
\[
\biggl(\sum_{j=-\infty}^{0}2^{s\vartheta j}
\|\theta_{j}f\|_{p}^\vartheta\biggr)^{1/\vartheta}\lesssim
\|f\|_{p}\biggl(\sum_{j=-\infty}^{0}2^{s\vartheta
j}\biggr)^{1/\vartheta}\lesssim \|f\|_{p}.
\]

To obtain  \eqref{B-P}, we take into account  \eqref{B-omega},
Theorem \ref{thm3}, and Hardy's inequalities. In particular, we establish the estimate from above as follows:
\begin{align*}
\sum_{j=0}^{\infty}2^{s\vartheta j}
\bigl(\omega_{r}(f,2^{-j})_{p}\bigr)^{\vartheta}&\lesssim
\sum_{j=0}^{\infty}2^{s\vartheta j}
\biggl(\sum_{l=j+1}^{\infty}2^{-qrl}
\bigl\|(-\Delta_k)^{r/2}P_{l}\bigr\|_{p}^q\biggr)^{\vartheta/q}\\
&\asymp \sum_{j=1}^{\infty}2^{(s-r)\vartheta j}
\bigl\|(-\Delta_k)^{r/2}P_{l}\bigr\|_{p}^{\vartheta}.
\end{align*}
Similarly, we obtain the estimate from below.

Any of equivalences  \eqref{B-E}--\eqref{B-B} shows that the Besov--Dunkl space
equipped with the (quasi-)norm $\|f\|_{B_{p,\vartheta}^{s}}$ does not depend on
the choice of  $r>s$.
\end{proof}

\begin{proof}[Proof of Theorem \ref{thm-besov-pitt}]
We give the proof only for $\vartheta<\infty$. The proof for $\vartheta=\infty$
is similar.

Suppose $2\le p<\infty$ and $\mathcal{F}_{k}(f)\in
L^{p'}(\mathbb{R}^d,d\mu_k)$.
First, applying 
\eqref{omega-pitt} with $q=p'$, we estimate
\[
\int_{0}^{1}\bigl(t^{-s}\omega_{r}(f,t)_{p}\bigr)^{\vartheta}\,
\frac{dt}{t}\asymp \sum_{j=0}^{\infty}2^{s\vartheta
j}\bigl(\omega_{r}(f,2^{-j})_{p}\bigr)^{\vartheta}\lesssim
\sum_{j=0}^{\infty}2^{s\vartheta
j}\bigl\|\min\{1,(2^{-j}|x|)^{r}\}\mathcal{F}_{k}(f)\bigr\|_{p'}^{\vartheta}.
\]
Second, we note  that
\begin{equation}\label{min-sum}
\sum_{j=0}^{\infty}2^{s\vartheta j}\bigl\|\min\{1,(2^{-j}|x|)^{r}\}\mathcal{F}_{k}(f)\bigr\|_{p'}^{\vartheta}\asymp
\bigl\|\chi_{\{|x|<1\}}|x|^{r}\mathcal{F}_{k}(f)\bigr\|_{p'}+
\biggl(\sum_{j=0}^{\infty}2^{s\vartheta j}\|\mathcal{F}_{k}(f)\chi_{j}\|_{p'}^{\vartheta}\biggr)^{1/\vartheta}.
\end{equation}
Indeed,
\begin{align*}
\sum_{j=0}^{\infty}2^{s\vartheta j}\bigl\|\min\{1,(2^{-j}|x|)^{r}\}\mathcal{F}_{k}(f)\bigr\|_{p'}^{\vartheta}&\asymp
\sum_{j=0}^{\infty}2^{(s-r)\vartheta j}
\biggl(\int_{|x|<2^{j}}\bigl||x|^{r}\mathcal{F}_{k}(f)\bigr|^{p'}\,d\mu_{k}\biggr)^{\vartheta/p'}\\
&\quad {}+\sum_{j=0}^{\infty}2^{s\vartheta j}
\biggl(\int_{|x|\ge 2^{j}}|\mathcal{F}_{k}(f)|^{p'}\,d\mu_{k}\biggr)^{\vartheta/p'}=I_{1}+I_{2}.
\end{align*}
To estimate $I_{2}$, we use Hardy's inequality \eqref{har-2}:
\begin{align*}
I_{2}&\asymp
\sum_{j=0}^{\infty}2^{s\vartheta j}
\biggl(\sum_{l=j}^{\infty}\int_{2^{l}\le
|x|<2^{l+1}}|\mathcal{F}_{k}(f)|^{p'}\,d\mu_{k}\biggr)^{\vartheta/p'}\\
&\asymp \sum_{j=0}^{\infty}2^{s\vartheta j}
\biggl(\int_{2^{j}\le
|x|<2^{j+1}}|\mathcal{F}_{k}(f)|^{p'}\,d\mu_{k}\biggr)^{\vartheta/p'}=
\sum_{j=0}^{\infty}2^{s\vartheta j}\|\mathcal{F}_{k}(f)\chi_{j}\|_{p'}^{\vartheta}.
\end{align*}
Taking into account 
\eqref{har-1} and $r>s$, we also get
\begin{align*}
I_{1}
&\asymp
\sum_{j=0}^{\infty}2^{j(s-r)\vartheta}\biggl(\int_{|x|<1}
\bigl||x|^{r}\mathcal{F}_{k}(f)\bigr|^{p'}\,d\mu_{k}\biggr)^{\vartheta/p'}\\
&\quad {}+\sum_{j=1}^{\infty}2^{j(s-r)\vartheta}\biggl(\int_{1\le |x|<2^{j}}
\bigl||x|^{r}\mathcal{F}_{k}(f)\bigr|^{p'}\,d\mu_{k}\biggr)^{\vartheta/p'}\asymp
\biggl(\int_{|x|<1}\bigl||x|^{r}\mathcal{F}_{k}(f)\bigr|^{p'}\,d\mu_{k}\biggr)^{\vartheta/p'}\\
&\quad {}+
\sum_{j=1}^{\infty}2^{j(s-r)\vartheta}
\biggl(\sum_{l=0}^{j-1}2^{jrp'}\int_{2^{l}\le
|x|<2^{l+1}}|\mathcal{F}_{k}(f)|^{p'}\,d\mu_{k}\biggr)^{\vartheta/p'}
\\
&\asymp
\biggl(\int_{|x|<1}\bigl||x|^{r}\mathcal{F}_{k}(f)\bigr|^{p'}\,d\mu_{k}\biggr)^{\vartheta/p'}+
\sum_{j=0}^{\infty}2^{s\vartheta j}
\biggl(\int_{2^{j}\le |x|<2^{j+1}}|\mathcal{F}_{k}(f)|^{p'}\,d\mu_{k}\biggr)^{\vartheta/p'}.
\end{align*}

Third, in view of  \eqref{min-sum}, we have
\[
\|f\|_{B_{p,\vartheta}^{s}}\lesssim \|f\|_p+
\bigl\|\chi_{\{|x|<1\}}|x|^{r}\mathcal{F}_{k}(f)\bigr\|_{p'}+
\biggl(\sum_{j=0}^{\infty}2^{s\vartheta
j}\|\mathcal{F}_{k}(f)\chi_{j}\|_{p'}^{\vartheta}\biggr)^{1/\vartheta},
\]
where $\bigl\|\chi_{\{|x|<1\}}|x|^{r}\mathcal{F}_{k}(f)\bigr\|_{p'}\le
\|\mathcal{F}_{k}(f)\|_{p'}$ and by
Hausdorff--Young's inequality
$\|f\|_p\lesssim \|\mathcal{F}_{k}(f)\|_{p'}$.
Thus,
\[
\|f\|_{B_{p,\vartheta}^{s}}\lesssim
\bigl\|\mathcal{F}_{k}(f)\bigr\|_{p'}+ \biggl(\sum_{j=0}^{\infty}2^{s\vartheta
j} \bigl\|\mathcal{F}_{k}(f)\chi_{j}\bigr\|_{p'}^{\vartheta}\biggr)^{1/\vartheta}.
\]

The reverse estimate 
for $1<p\le 2$, $p\le q\le p'$, and $f\in L^p(\mathbb{R}^d,d\mu_k)$ can be
obtained similarly. 
We have
\[
\int_{0}^{1}\bigl(t^{-s}\omega_{r}(f,t)_{p}\bigr)^{\vartheta}\,
\frac{dt}{t}\gtrsim \sum_{j=0}^{\infty}2^{s\vartheta
j}\bigl\|\min\{1,(2^{-j}|x|)^{r}\}\mathcal{F}_{k}(f)\bigr\|_{p'}^{\vartheta}.
\]
Using \eqref{min-sum},
\[
\|f\|_{B_{p,\vartheta}^{s}}\gtrsim \|f\|_p+
\bigl\|\chi_{\{|x|<1\}}|x|^{r}\mathcal{F}_{k}(f)\bigr\|_{p'}+
\biggl(\sum_{j=0}^{\infty}2^{s\vartheta
j}\|\mathcal{F}_{k}(f)\chi_{j}\|_{p'}^{\vartheta}\biggr)^{1/\vartheta}.
\]
Hausdorff--Young's inequality $\|f\|_{p}\gtrsim \|\mathcal{F}_{k}(f)\|_{p'}$
completes the proof.
\end{proof}

\bigskip

\end{document}